\documentclass[11pt,leqno,twoside]{amsart}
\usepackage{amssymb,amsmath,amsthm}
\usepackage[T1]{fontenc}
\usepackage[utf8]{inputenc}
\DeclareUnicodeCharacter{0096}{--}
\DeclareUnicodeCharacter{0092}{'}
\DeclareUnicodeCharacter{0093}{``}
\DeclareUnicodeCharacter{0094}{''}
\usepackage{indentfirst,latexsym}
\usepackage{mathrsfs}
\usepackage{cite,enumitem,graphicx}
\usepackage{xcolor}

\usepackage[colorlinks=false,pdfborder={0 0 0},urlcolor=blue,hypertexnames=false,
citecolor=red,linkcolor=blue,linktocpage,pdfpagelabels,
bookmarksnumbered,bookmarksopen]{hyperref}
\usepackage[english]{babel}
\usepackage[a4paper,left=2.61cm,right=2.61cm,top=2.72cm,bottom=2.72cm]{geometry}

\makeatletter
\providecommand\@dotsep{5}
\def\listtodoname{List of Todos}
\def\listoftodos{\@starttoc{tdo}\listtodoname}
\makeatother

\numberwithin{equation}{section}
\newtheorem{theorem}{Theorem}[section]
\newtheorem{proposition}[theorem]{Proposition}
\newtheorem{definition}[theorem]{Definition}
\newtheorem{lemma}[theorem]{Lemma}
\newtheorem{corollary}[theorem]{Corollary}

\newtheorem{remark}{Remark}

\newcommand{\R}{\mathbb{R}}
\newcommand{\N}{\mathbb{N}}
\def\real{\mathbb{R}}

\def\dis{\displaystyle}

\title[Elliptic problems for a vectorial $p\&q$-Laplacian]
{Elliptic problems involving a diffusion operator of vectorial
$p\&q$-Laplacian type}

\author[M. L. M. Carvalho]{Marcos L. M. Carvalho}
\author[J. R. Santos Jr.]{Jo\~ao R. Santos J\'unior}
\author[H. W. G. Silva]{Harlen Wenderson G. Silva}

\address[M. L. M. Carvalho]
{{\newline\indent Instituto de Matem\'atica e Estat\'istica
\newline\indent
Universidade Dederal de Goi\'{a}s
\newline\indent
74001-970, Goi\^{a}nia-GO, Brazil}}
\email{marcos\_leandro\_carvalho@ufg.br}

\address[J. R. Santos Jr.]
{{\newline\indent Faculdade de Matem\'atica
\newline\indent
Instituto de Ci\^{e}ncias Exatas e Naturais
\newline\indent
Universidade Federal do Par\'a
\newline\indent
Avenida Augusto Corr\^{e}a 01, 66075-110, Bel\'em, PA, Brazil}}
\email{joaojunior@ufpa.br}

\address[H. W. G. Silva]
{{\newline\indent Programa de Doutorado em Matemática
\newline\indent
Instituto de Ci\^{e}ncias Exatas e Naturais
\newline\indent
Universidade Federal do Par\'a
\newline\indent
Avenida Augusto Corr\^{e}a 01, 66075-110, Bel\'em, PA, Brazil}}
\email{harlen.silva@icen.ufpa.br}

\thanks{Jo\~ao R. Santos was partially
supported by CNPq with grant 304340/2025-1, Brazil.
Harlen W. G. Silva was partially
supported by CAPES. Marcos L. M. Carvalho was partially
supported by CNPq with grant 300411/2025-1.}
\subjclass[2010]{ 
35J50,  	
35J57, 
	35J70. 
	}
\keywords{anisotropic elliptic equations, vectorial $p\&q$-Laplacian,
variational methods, comparison principle, concave--convex problems}

\begin{document}
\enlargethispage{3pt}
\color{black}

\color{black}

\color{black}
\baselineskip 11pt

\begin{abstract}
We study a broad class of anisotropic Dirichlet problems driven by the
nonhomogeneous operator
\[
\mathcal L_Au=-\sum_{i=1}^N\partial_i\!\left(
a_i(|\partial_i u|^{p_i})|\partial_i u|^{p_i-2}
\partial_i u\right),
\]
where the functions $a_i$ satisfy structural growth and monotonicity
conditions of $(p_i,q_i)$-type.  This framework contains, among other
examples, the anisotropic $\vec p$-Laplacian, the orthotropic $p\&q$-Laplacian,
and anisotropic operators combining several directional growth regimes.  We
first establish a comparison principle adapted to this lack of homogeneity.
For power nonlinearities, including the singular range, we then obtain
nonexistence, existence, singular uniqueness, comparison alternatives, and multiplicity results
according to the position of the reaction exponent relative to the
anisotropic growth and critical exponents.  In particular, an intermediate
power regime exhibits nonexistence for small positive parameters and
two distinct nontrivial solutions for large parameters without additional
restrictions on the smallest effective exponent. At partial resonance,
$r=p_1<p_N$, we identify an exact positive parameter threshold: there are
no nontrivial solutions at or below it, whereas each parameter above it
admits infinitely many pairs of solutions converging strongly to zero.
We also treat concave--convex reactions with subcritical or
critical growth and prove the existence of nontrivial nonnegative weak
solutions in the corresponding parameter ranges.  The arguments combine a
convexity method of D\'iaz--Saa type, anisotropic Sobolev embeddings, direct
minimization, Ekeland's variational principle, mountain-pass techniques, and
concentration--compactness.
\end{abstract}

\color{black}

\color{black}
\bigskip

\color{black}

\color{black}
\maketitle\color{black}
\begin{center}
\begin{minipage}{12cm}
\tableofcontents
\end{minipage}
\end{center}

\color{black}

\color{black}
\section{Introduction}

\color{black}
Let $\Omega\subset\mathbb R^N$ be a bounded domain with smooth boundary.  In
this paper we study the Dirichlet problem
\begin{equation}\label{intro-problem}
\left\{
\begin{aligned}
\mathcal L_Au&=f(x,u) &&\text{in }\Omega,\\
u&=0 &&\text{on }\partial\Omega,
\end{aligned}
\right.
\end{equation}
driven by the quasilinear operator
\begin{equation}\label{intro-operator}
\mathcal L_Au
:=-\sum_{i=1}^{N}\partial_i
\left(
a_i\!\left(\left|\partial_i u\right|^{p_i}\right)
\left|\partial_i u\right|^{p_i-2}
\partial_i u
\right).
\end{equation}
Here $1<p_1\leq\cdots\leq p_N$, the functions
$a_i\colon[0,\infty)\to(0,\infty)$ determine the diffusion law in each
coordinate direction, and $f$ is a Carath\'eodory reaction term.  The central
feature of \eqref{intro-operator} is that two independent sources of
nonhomogeneity are present at the same time: the exponent may vary with the
direction, and the coefficient $a_i$ may change the effective growth of the
flux as the size of $|\partial_i u|$ changes.  We refer to
\eqref{intro-operator} as an anisotropic vectorial
$\vec p\&\vec q$-Laplacian-type operator, where ``vectorial'' describes the
vectors of directional exponents rather than a vector-valued unknown.

The class \eqref{intro-problem} lies at the intersection of two well-developed
theories.  If $a_i\equiv1$, then \eqref{intro-operator} becomes the anisotropic
$\vec p$-Laplacian
\[
-\sum_{i=1}^N\partial_i
\bigl(|\partial_i u|^{p_i-2}\partial_i u\bigr).
\]
Its natural setting is an anisotropic Sobolev space, whose directional
derivatives possess different integrability exponents.  The foundations of
these spaces and their embeddings go back to Nikolskii, Troisi, Trudinger,
and R\'akosn\'ik
\cite{Nik70,Troi69,Trudin,Rako79,Rako81}.  Existence, nonexistence,
compactness, critical growth, singular reactions, and extremal problems for
anisotropic equations have subsequently been studied in, among others,
\cite{FraGaKaw04,HamRako05,HamRako06,HamRak07,AlvesHam08,DM,
GelsonGioLeand,LeggatMiri}.  On the other hand, if the directional exponents
are constant, $p_i=p$ and $q_i=q$, and
$a_i(t)=1+t^{(q-p)/p}$, then
\[
\mathcal L_Au=-\sum_{i=1}^N\partial_i\bigl(|\partial_i u|^{p-2}\partial_i u+|\partial_i u|^{q-2}\partial_i u\bigr),
\]
the orthotropic $p\&q$-Laplacian. Related isotropic $p\&q$-Laplacian problems have motivated a
substantial literature concerning eigenvalues, multiplicity, sign-changing
solutions, and critical growth; see, for example,
\cite{HeLi,LiLiang,LiGuo,MuYang,GM13,BariFigue,BarFigue,A.F.J.R}.

The isotropic theory with a single constitutive function provides another
important antecedent. Its typical operator is
$-\operatorname{div}(\phi(|\nabla u|)\nabla u)$, with energy density
$\Phi(t)=\int_0^t s\phi(s)\,ds$. The Orlicz--Sobolev framework and its
embedding theory were developed in \cite{DonaldsonTrudinger1971,Gossez1974};
principal eigenvalues for general quasilinear operators were studied in
\cite{GarciaHuidobro1999}. Multiple positive solutions under assumptions on
the constitutive law and the reaction were obtained in
\cite{FukagaiNarukawa2007}, while critical Orlicz--Sobolev growth on
$\mathbb R^N$ was treated in \cite{FukagaiItoNarukawa2006}.
The generalized $p\&q$ problems in \cite{A.F.J.R,BarFigue} are particularly
close to the present use of a nonconstant diffusion coefficient.
For example, the radial flux $\phi(t)=a(t^p)t^{p-2}$ involves a single
function $a$. Nevertheless, even when all $p_i=p$ and all $a_i=a$,
\eqref{intro-operator} acts on the individual derivatives rather than on
$|\nabla u|$. This distinction separates the present orthotropic energy
from the radial $\Phi$-Laplacian and prevents a direct identification of
their variational geometries.

For the standard isotropic two-power operator, an early treatment of
stationary reaction--diffusion equations is \cite{CherfilsIlyasov2005};
the survey \cite{MaranoMosconi2018} describes existence, resonance, and
critical-growth questions for the Dirichlet problem. In the anisotropic
two-power setting, positive solutions were studied in
\cite{RazaniFigueiredo2023,Tavares2023}, and superlinear nonlinearities
without the Ambrosetti--Rabinowitz condition were considered in
\cite{AmorosoSciammettaWinkert2024}. These works concern sums of directional
power fluxes under their respective growth assumptions. A recent extension
to coupled anisotropic two-power systems is
\cite{RazaniTavaresSousa2025}, where existence and a second solution are
obtained under different assumptions on the reaction.
Our coefficients
$a_i$ allow more general directional constitutive laws, and the power
reaction is examined across the different small-amplitude orders.

For the homogeneous anisotropic operator, the multiplicity theory in
\cite{FigueiredoSantosSuarez2015} is a direct antecedent. Positive
eigenfunctions obtained by sub- and supersolutions are studied in
\cite{CianiFigueiredoSuarez2021}, and the combination of that method with
the mountain-pass theorem appears in \cite{DosSantosFigueiredoSilva2020}.
Genus methods yielding infinitely many anisotropic solutions are also
developed in \cite{RazaniFigueiredo2022}. Thus infinite multiplicity for
anisotropic equations is already part of the literature; the specific
issue addressed here is the exact threshold and accumulation at zero for
each fixed parameter above it in the partially resonant power problem.

Regularity constitutes a related but distinct line of research.
Fully anisotropic Sobolev inequalities and local boundedness of minimizers
are treated in \cite{CianchiSobolev2000,CianchiBounded2000}. Lipschitz
regularity for orthotropic energies with nonstandard growth, including
singular diffusion, is investigated in
\cite{BousquetBrasco2020,BousquetBrascoLeone2024}. For radial quasilinear
operators, structural regularity theory and boundedness results for the
$\Phi$-Laplacian are found in \cite{Lieberman1991,SilvaCarvalhoAlbuquerque2020}.
These results require their own hypotheses and are cited as background;
they are not invoked to assert additional regularity for every coefficient
allowed in \eqref{intro-operator}.

\color{black}
These two reductions also explain the modeling relevance of the present
framework.  A $p\&q$-growth energy describes a diffusion mechanism with two
different power responses: one regime may dominate at moderate gradients,
whereas the other becomes decisive at larger gradients.  It is therefore a
canonical prototype for nonstandard-growth energies and constitutive laws,
as developed in the regularity theory of $(p,q)$-growth problems
\cite{PMar}.  Anisotropy has a different physical meaning: the constitutive
response depends on the direction, as occurs in media with preferred axes or
direction-dependent transport.  Such diffusion mechanisms also arise in
direction-sensitive image processing, where anisotropic smoothing is used to
reduce noise while preserving relevant structures \cite{Veraetal}.  The
operator \eqref{intro-operator} combines these effects by allowing both the
growth exponents and the transition between growth regimes to vary from one
direction to another.

We now describe the structural assumptions that make this generality
analytically tractable. For every $i\in\{1,\ldots,N\}$, let 
\(
1<p_1\leq\cdots\leq p_N,
~
1<q_1\leq\cdots\leq q_N,
~
p_i<q_i,
\)
and set
\[
\sigma_i:=\frac{q_i-p_i}{p_i}.
\]
The functions $a_i\in C([0,\infty),(0,\infty))\cap C^1((0,\infty),(0,\infty))$ are assumed, as required by
the result under consideration, to satisfy the following conditions:
\begin{enumerate}[label=$(a_\arabic*)$]
\item\label{intro-a1}
There exist constants $0<\alpha_i\leq\beta_i$, $\theta_i>0$, and
$\zeta_i\geq0$ such that

\[
\alpha_i+H(\zeta_i)\theta_i t^{\sigma_i}
\leq a_i(t)\leq
\beta_i+H(\zeta_i)\theta_i t^{\sigma_i},
\qquad t\geq0,
\]

where $H(0)=0$ and $H(t)=1$ for $t>0$.

\item\label{intro-a2}
The function $a_i$ is nondecreasing on $[0,\infty)$.

\item\label{intro-a3}
The logarithmic growth is strictly below the upper $q_i$-phase, namely
\[
t a_i'(t)<\sigma_i a_i(t),\qquad t>0.
\]
\end{enumerate}

Condition~\ref{intro-a1} gives the coercive lower bound and yields
$p_i$-growth when $H(\zeta_i)=0$, and $(p_i,q_i)$-growth when
$H(\zeta_i)=1$.

Notice that the presence of $H(\zeta_i)$ also in the upper bound of
$(a_1)$ is essential for the consistency between the growth of the energy
and the effective exponent $\gamma_i$. Indeed, if $H(\zeta_i)=0$, then
$\gamma_i=p_i$. If the upper bound were allowed to contain a $q_i$-term,
one could have
\(
\partial_i u\in L^{p_i}(\Omega)\setminus L^{q_i}(\Omega),
\)
so that a term of the form
\[
\int_\Omega |\partial_i u|^{q_i}\,dx
\]
could be infinite, and the energy would not be well defined on
$W_0^{1,\vec{\gamma}}(\Omega)$. Thus, when $H(\zeta_i)=0$, the upper
bound must exclude the $q_i$-growth, so that the $i$-th direction has
effectively $p_i$-growth. On the other hand, when $H(\zeta_i)=1$, the
$q_i$-growth is active and $\gamma_i=q_i$.

Condition~\ref{intro-a2} provides the
monotonicity and convexity required by comparison and variational arguments.
Condition~\ref{intro-a3} separates the actual growth of $a_i$ from the
limiting $q_i$-phase.  If
$A_i(t)=\int_0^t a_i(s)\,ds$, these hypotheses yield the estimates that place
the energy naturally in an anisotropic Sobolev space with effective exponent
\[
\gamma_i=(1-H(\zeta_i))p_i+H(\zeta_i)q_i.
\]

The flexibility of the operator is visible already at the level of admissible
coefficients.

The following standard examples are admissible for every $p_i>1$
(with positive constants and suitable choices of
$\alpha_i,\beta_i,$ and $\theta_i$). The first one corresponds to
$H(\zeta_i)=0$, whereas the remaining examples correspond to
$H(\zeta_i)=1$:

\begin{itemize}
\item $a_i(t)=c_i$, which gives weighted anisotropic $p_i$-growth;
\item $a_i(t)=c_i+d_i t^{\sigma_i}$, which gives anisotropic
$(p_i,q_i)$-growth;
\item
\[
a_i(t)=c_i+d_i t^{\sigma_i}
+e_i\frac{t^{\rho_i}}{1+t^{\rho_i}},
\qquad 0<\rho_i\leq\sigma_i,
\]
which adds a bounded monotone transition to the $(p_i,q_i)$ law;
\item more generally,
\[
a_i(t)=c_i+d_i t^{\sigma_i}
+\sum_{j=1}^{m_i}e_{ij}\frac{t^{\rho_{ij}}}{1+t^{\rho_{ij}}},
\qquad 0<\rho_{ij}\leq\sigma_i,
\]
which combines several bounded directional transitions without changing the
leading $q_i$-growth.
\end{itemize}
The first choice recovers the anisotropic $\vec p$-Laplacian.  The second
produces, up to positive weights, the sum of the directional $p_i$- and
$q_i$-Laplacians.  If all $p_i=p$, all $q_i=q$, and the coefficients are
independent of $i$, this reduces to the orthotropic $p\&q$-Laplacian, whose fluxes depend on individual derivatives rather than on $|\nabla u|$.  The last two
examples show that the theory is not restricted to an exact sum of two
homogeneous operators: it also permits bounded monotone transitions while
preserving the prescribed leading growth.  Thus the functions $a_i$ encode a
family of operators rather than a single fixed differential law.

The price of this generality is substantial.  In contrast with a homogeneous
$p$-Laplacian, the energy associated with \eqref{intro-operator} has no single
scaling, and the natural Sobolev geometry depends simultaneously on all the
effective exponents $\gamma_i$.  Consequently, coercivity, compactness, and
the geometry of the energy change as the reaction exponent crosses the
directional thresholds. When $\overline\gamma<N$ and $\gamma_+<\gamma^*$,
growth at the harmonic Sobolev exponent $\gamma^*$ requires a concentration argument;
for singular reactions, the energy is not differentiable at the origin in
the usual sense; and comparison cannot be obtained merely by importing the
standard isotropic argument.  These difficulties require a combination of
anisotropic estimates, convexity, truncation, and variational methods.

A key ingredient in our variational analysis is the compactness property of
the principal operator. Under $(a_1)-(a_2)$, we prove that $\mathcal A$ is
continuous, strictly monotone and of type $(S_+)$; see Proposition
\ref{Splus}. In particular, if $u_n\rightharpoonup u$ in
$W_0^{1,\vec{\gamma}}(\Omega)$ and
\[
\limsup_{n\to\infty}
\langle\mathcal A(u_n),u_n-u\rangle\leq0,
\]
then $u_n\to u$ strongly in $W_0^{1,\vec{\gamma}}(\Omega)$.
This property plays a fundamental role in recovering strong convergence of
Palais--Smale sequences in both the subcritical and critical regimes.

Our first contribution is a comparison principle for nonnegative sub- and
supersolutions of \eqref{intro-problem}. Assuming
\ref{intro-a1}--\ref{intro-a2} and that
\[
t\longmapsto \frac{f(x,t)}{t^{p_1-1}}
\quad\text{is strictly decreasing for a.e. }x\in\Omega,
\]
we prove that $u_1\leq u_2$ a.e. in $\Omega$, provided that
$u_1\leq Cu_2$ a.e. in $\Omega$ for some $C>0$; see
Theorem~\ref{Comparison principle}. The proof is based on the convexity of a
functional constructed after the transformation $u\mapsto u^{p_1}$ and on
test functions of D\'iaz--Sa\'a type \cite{DiazSaa1987}. This result is not merely auxiliary: it
also supplies the comparison alternative in the regular sub-$p_1$ regime.
In the singular regime, strict monotonicity yields unconditional uniqueness.

We next consider the pure-power problem
\[
\mathcal L_Au=\lambda |u|^{r-2}u,
\qquad u\in W_0^{1,\vec\gamma}(\Omega).
\]
Let $\gamma^*$ denote the harmonic anisotropic Sobolev exponent and let
$\widetilde p$ be the largest active exponent among $p_N$ and the $q_i$-phases.
The results reveal a clear dependence on the relative position of $r$.  For
$\lambda\leq0$ and $1<r<\gamma^*$, the trivial solution is the only weak solution.  In the
intermediate range
$p_1<r<\widetilde p<\gamma^*$, there exists a positive threshold below which
no nontrivial solution exists, whereas sufficiently large values of
$\lambda$ yield a nontrivial solution; see
Theorem~\ref{Theorem p1 smallest r smallest critical range}. The same
large-parameter regime contains at least two distinct nontrivial solutions
at negative and positive energy levels; see
Theorem~\ref{palais smale r menor p tilde}. The full-modular estimate in
Lemma~\ref{lema r less critical and p tilde consider the cases} proves
the local minimum at zero throughout this range, without requiring
$\gamma_-<r$ or, when $\overline\gamma<N$, $\gamma_-<\gamma^*/N$.
Retaining all directional factors in the Sobolev product inequality is
essential to this improvement. 
In contrast, if $1<r<p_1$, a nontrivial nonnegative solution exists for
every $\lambda>0$. Moreover, two nonnegative solutions coincide whenever
they are mutually comparable up to positive constants. In particular, if
two distinct positive a.e. solutions exist, at least one of the corresponding
quotients is not essentially bounded.
The borderline case $r=p_1<p_N$ has a different geometry. Let
$Y=\{i:p_i=p_1\}$. Theorem~\ref{partial-resonance-infinite} determines
the exact threshold $\Lambda_Y(\Omega)$, defined in
\eqref{partial-rayleigh-threshold} using only the coefficients $a_i(0)$
and the derivatives in $Y$. No nontrivial solution exists for
$\lambda\leq\Lambda_Y(\Omega)$, while each fixed
$\lambda>\Lambda_Y(\Omega)$ admits infinitely many distinct pairs
of weak solutions converging strongly to zero. This sharpens, at partial
resonance and for the present nonhomogeneous operators, the
large-parameter finite-multiplicity conclusion for the homogeneous
anisotropic problem in \cite[Theorem 1.2]{FigueiredoSantosSuarez2015}.
For the homogeneous operator, \cite{CianiFigueiredoSuarez2021} establishes
a positive-eigenfunction threshold when $p_1\leq r<p_N$. At $r=p_1$, the
present theorem identifies the threshold by a directional Rayleigh
quotient, includes nonexistence at equality, and supplies a sequence of
weak solutions for every parameter strictly above it; positivity of that
sequence is not asserted.
The proof combines a sectionwise Rayleigh construction with the
Clark genus framework \cite{Clark1972} and its partially sublinear
extension \cite{LiuWangClark}, and does not require $(a_3)$.
Corollary~\ref{partial-disk-example} exhibits, on a smooth disk and with
linear reaction, an interval of parameters admitting infinitely many
anisotropic solutions although the corresponding Laplacian problem has
only zero. The effect comes from different small-amplitude orders in
different directions; the sign of the sequence is not prescribed.

For the singular range $0<r<1$, we obtain a nontrivial positive a.e. solution
for every $\lambda>0$, and this solution is unique;
see Theorem~\ref{Singular Case Theorem}.

The concave--convex mechanism originates, in the semilinear setting, in
\cite{AmbrosettiBrezisCerami1994}. Quasilinear extensions include
\cite{A2,GM13,CarvalhoM}; nonlinear Rayleigh quotients and Nehari-manifold
methods for radial $\Phi$-Laplacian problems are developed in
\cite{CarvalhoGasinskiSantosSilva2025}. These provide context for the
parameter-dependent competition of the reaction terms, while the
directional diffusion and critical exponent here are anisotropic.
Finally, we investigate concave--convex reactions of the form
\[
\mathcal L_Au=\lambda f(u)+g(u).
\]
For a concave term comparable with $t^{r-1}$, $1<r<p_1$, and a superlinear term
with admissible subcritical or critical growth, we obtain a nontrivial
nonnegative weak solution for sufficiently small positive $\lambda$.  We
also treat the critical choice
$g(t)=|t|^{\gamma^*-2}t$ under an
Ambrosetti--Rabinowitz-type condition on $f$ and prove existence for all
sufficiently large $\lambda$; see
Theorem~\ref{critical-concave-convex}.  The proofs use direct minimization and
Ekeland's variational principle \cite{Ekeland1974} in the concave regime, the mountain-pass
theorem \cite{AM} for multiplicity and superlinear reactions, and an anisotropic
concentration--compactness argument to recover compactness below the critical
energy threshold. The underlying concentration--compactness framework
goes back to \cite{LionsLimitI1985,LionsLimitII1985}; anisotropic
critical-growth problems and compactness are treated in
\cite{HamRako05,AlvesHam08}.
In this way, one structural setting yields a unified
treatment of comparison, nonexistence, existence, and multiplicity for
several operators that had previously been studied separately.

The paper is organized as follows.  Section~2 introduces the structural
assumptions, the effective anisotropic exponents, and the variational and
embedding tools used throughout the work.  Section~3 proves the comparison
principle.  Section~4 is devoted to power-type reactions: it contains the
nonexistence results, the existence and multiplicity theory in the regular
subcritical regimes, the exact threshold and infinite multiplicity at
partial resonance, and the singular case.  Section~5 treats concave--convex
nonlinearities, including the critical-growth problem.  The bibliography at
the end collects the anisotropic, $p\&q$-growth, and variational references
used in the analysis.

\section{{Assumptions and variational background}}
Since our approach is based on variational methods, this section is devoted to establishing a suitable variational framework for problem \eqref{P}. To be more precise, throughout the paper, we will be concerned with the following general class of problems

\begin{equation}\label{P}\tag{$P$}
\left \{ \begin{array}{ll}
  -\displaystyle\sum_{i=1}^{N}\partial_{i}\left(a_{i}\left(\left|\partial_{i} u\right|^{p_{i}}\right)\left|\partial_{i} u\right|^{p_{i}-2}\partial_{i} u\right)=f(x,u) \; \mbox{in} \; \Omega , \; \\
  u=0, \;\mbox{on}\; \partial\Omega,
\end{array}\right.
\end{equation}
where $\Omega\subset\R^N$, $N\geq1$, is a bounded smooth domain, and $f$ is a Carath\'eodory function to be specified later. Throughout the paper each $a_i$ is positive and belongs to $C([0,\infty))\cap C^1((0,\infty))$. Conditions $(a_1)$--$(a_3)$ are imposed only as explicitly required by each result. The flux $a_i(|t|^{p_i})|t|^{p_i-2}t$ is assigned the value zero at $t=0$.

\begin{proposition}
	Assume that $(a_1)-(a_3)$ hold. Define
$\displaystyle A_i(t)=\int_0^t a_i(s)\,ds$. Then, for all $t,\rho>0$
	\begin{eqnarray}
		0\leq\dfrac{a_i'(t^{p_i})t^{p_i}}{a_i(t^{p_i})}<\dfrac{q_i-p_i}{p_i}\qquad\mbox{and}\qquad
		1\leq \dfrac{a_i(t^{p_i})t^{p_i}}{A_i(t^{p_i})}<\dfrac{q_i}{p_i},\label{desi-f}\\		
		\min\left\{\rho,\rho^{\frac{q_i}{p_i}} \right\}A_i(t)\leq A_i(\rho t)\leq \max\left\{\rho,\rho^{\frac{q_i}{p_i}} \right\}A_i(t)\label{joao4 sugestao},\\
		\alpha_it+H(\zeta_i)\dfrac{p_i\theta_i}{q_i}t^{q_i/p_i}\leq A_i(t)\leq \beta_it+H(\zeta_i)\frac{p_i\theta_i}{q_i}t^{q_i/p_i}.\label{desig of item a1}
	\end{eqnarray}
\end{proposition}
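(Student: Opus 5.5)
The proof is elementary; I would work with the variable $s=t^{p_i}>0$ throughout.

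\textbf{First chain in (\ref{desi-f}).} This is immediate from $(a_2)$ and $(a_3)$. Monotonicity gives $a_i'(s)\ge 0$, and $a_i>0$, so the quotient is nonnegative. The strict upper bound is exactly $(a_3)$ evaluated at $s$.

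\textbf{Second chain in (\ref{desi-f}).} For the lower bound, $(a_2)$ gives $A_i(s)=\int_0^s a_i(\tau)\,d\tau\le s\,a_i(s)$, hence $a_i(s)s/A_i(s)\ge1$. For the strict upper bound, observe that $(a_3)$ says
\[
\frac{d}{d\tau}\bigl(\tau^{-\sigma_i}a_i(\tau)\bigr)<0 \quad\text{on }(0,\infty),
\]
so $\tau\mapsto\tau^{-\sigma_i}a_i(\tau)$ is strictly decreasing. Then for $0<\tau<s$ we have $a_i(\tau)>a_i(s)(\tau/s)^{\sigma_i}$. Integrating over $(0,s)$ gives
\[
A_i(s)>\frac{a_i(s)\,s}{\sigma_i+1}=\frac{p_i}{q_i}\,a_i(s)\,s .
\]
The inequality stays strict after integration because the integrand inequality is strict on a set of positive measure and both sides are continuous. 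This is the only point that needs a little care, and it is why I expect it to be the main obstacle, small as it is.

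\textbf{Scaling estimate (\ref{joao4 sugestao}).} Set $g(\rho)=\log A_i(\rho t)$ for fixed $t>0$. Then
\[
g'(\rho)=\frac{1}{\rho}\cdot\frac{a_i(\rho t)\,\rho t}{A_i(\rho t)}\in\Bigl[\frac1\rho,\frac{q_i}{p_i\rho}\Bigr)
\]
by the previous step. For $\rho\ge1$, integrating $g'$ from $1$ to $\rho$ yields $\rho A_i(t)\le A_i(\rho t)\le\rho^{q_i/p_i}A_i(t)$. For $\rho<1$, integrating from $\rho$ to $1$ gives the reversed pair, $\rho^{q_i/p_i}A_i(t)\le A_i(\rho t)\le \rho A_i(t)$. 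Together these are exactly the $\min/\max$ form. The case $t=0$ is trivial since both sides vanish.

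\textbf{Bounds (\ref{desig of item a1}).} These follow by integrating $(a_1)$ from $0$ to $t$. Since
\[
\int_0^t s^{\sigma_i}\,ds=\frac{t^{\sigma_i+1}}{\sigma_i+1}=\frac{p_i}{q_i}\,t^{q_i/p_i},
\]
we get $\alpha_i t+H(\zeta_i)\frac{p_i\theta_i}{q_i}t^{q_i/p_i}\le A_i(t)\le\beta_i t+H(\zeta_i)\frac{p_i\theta_i}{q_i}t^{q_i/p_i}$, as claimed.
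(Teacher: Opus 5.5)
Your proof is correct and matches the paper step by step, except for one sub-step: the strict bound $a_i(s)s/A_i(s)<q_i/p_i$. There you use that $\tau\mapsto\tau^{-\sigma_i}a_i(\tau)$ is strictly decreasing and integrate the pointwise comparison $a_i(\tau)>a_i(s)(\tau/s)^{\sigma_i}$, whereas the paper integrates $(a_3)$ by parts on $(\varepsilon,s)$ and lets $\varepsilon\downarrow0$. Your variant avoids the boundary term $\varepsilon a_i(\varepsilon)$, and it makes strictness immediate because the comparison is strict at every $\tau\in(0,s)$.
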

\begin{proof}
	The first inequality follows immediately from $(a_2)$ and $(a_3)$.
	Since $a_i$ is nondecreasing,
	\[
	 A_i(s)=\int_0^s a_i(\tau)\,d\tau\leq s a_i(s),
	 \qquad s>0.
	\]
	On the other hand, $(a_3)$ gives
	\[
	 \tau a_i'(\tau)<\frac{q_i-p_i}{p_i}a_i(\tau).
	\]
	For $0<\varepsilon<s$, integrate over $(\varepsilon,s)$ and use integration by parts.
	Since $a_i$ is continuous at zero, $\varepsilon a_i(\varepsilon)\to0$ and
	$\int_0^\varepsilon a_i(\tau)\,d\tau\to0$. Letting $\varepsilon\downarrow0$, we obtain
	\[
	 s a_i(s)-A_i(s)
	 <\frac{q_i-p_i}{p_i}A_i(s),
	\]
	The strict inequality persists because the positive deficit in $(a_3)$ has a positive integral on a fixed compact subinterval of $(0,s)$. This proves
	\[
	 1\leq\frac{s a_i(s)}{A_i(s)}<\frac{q_i}{p_i}.
	\]

If $\rho\geq1$, integrating between $t$ and $\rho t$ gives
\[
\log\rho
\leq
\log\frac{A_i(\rho t)}{A_i(t)}
\leq
\frac{q_i}{p_i}\log\rho,
\]
and therefore
\[
\rho A_i(t)
\leq A_i(\rho t)
\leq
\rho^{q_i/p_i}A_i(t).
\]
If $0<\rho<1$, the same argument gives
\[
\rho^{q_i/p_i}A_i(t)
\leq A_i(\rho t)
\leq
\rho A_i(t).
\]
Thus \eqref{joao4 sugestao} follows.

Finally, integrating the two inequalities in $(a_1)$ over $(0,t)$,
and using
\(
\sigma_i+1=\frac{q_i}{p_i},
\)
we obtain
\[
\alpha_i t+
H(\zeta_i)\frac{p_i\theta_i}{q_i}t^{q_i/p_i}
\leq A_i(t)\leq
\beta_i t+
H(\zeta_i)\frac{p_i\theta_i}{q_i}t^{q_i/p_i},
\]
which proves \eqref{desig of item a1}.
\end{proof}

The estimate \eqref{desig of item a1} follows from $(a_1)$ alone and will also be used in results that do not assume $(a_3)$.

\color{black}
Throughout this section we will introduce the main definitions and properties of anisotropic Sobolev spaces that will be used in this article. For a more detailed and thorough approach to the subject, we refer the interested reader to the references \cite{Rako79} and \cite{Rako81}.

Given $\overrightarrow p=(p_1,\ldots,p_N)$, assume
\begin{equation}\label{exponent definitions}
1<p_i<\infty,\qquad i=1,\ldots,N.
\end{equation}
For the structural exponents we retain $p_1\leq\cdots\leq p_N$, but the
embedding statements below do not require an ordering. Set
\[
\begin{gathered}
\overline p:=N\left(\sum_{i=1}^N\frac1{p_i}\right)^{-1},
\qquad p_+:=\max_i p_i,\\
p^*:=\begin{cases}
N\overline p/(N-\overline p),&\overline p<N,\\
+\infty,&\overline p\geq N,
\end{cases}
\qquad p_\infty:=\max\{p^*,p_+\}.
\end{gathered}
\]
An infinite endpoint is a convention for the ranges of finite exponents;
it is not used as a real power or as an assertion of an $L^\infty$ embedding.
The \emph{anisotropic Sobolev space} is defined by 
$$W^{1,\overrightarrow{p}}(\Omega):=\left\{v\in W^{1,1}(\Omega):\displaystyle\partial_i v\in L^{p_i}(\Omega)\right\}$$
and $W_0^{1,\overrightarrow{p}}(\Omega):=W^{1,\overrightarrow{p}}(\Omega)\cap W_0^{1,1}(\Omega)$ with the norm
\begin{equation}\label{norm 1 sobolev space}
|| u||_{1,\overrightarrow{p}}= \displaystyle\sum_{i=1}^N \left| \displaystyle\partial_i u \right|_{p_i}.
\end{equation}

\begin{proposition}
For every $r\geq1$, every $i\in\{1,\ldots,N\}$, and every
$u\in W_0^{1,1}(\Omega)$ such that $\partial_i u\in L^r(\Omega)$, the
following Poincar\'e-type inequality holds:
\begin{equation}\label{API}
|u|_{r}\leq \frac{dr}{2}\left| \partial_{i} u\right|_{r},
\end{equation}
where $d:=\sup_{x, y\in \Omega}\langle x-y, e_{i}\rangle$ and $e_i$ denotes any vector belonging to the canonical basis of $\R^{N}$.
\end{proposition}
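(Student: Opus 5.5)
We first prove the inequality for $u\in C_c^\infty(\Omega)$ and then pass to the general case by density. After a translation, which changes neither $d$ nor the norms, we assume $\Omega\subset\{x:\ 0<\langle x,e_i\rangle<d\}$; this is possible because the width of $\Omega$ in the direction $e_i$ is exactly $d$. We write $x=(x_i,x')$ and extend $u$ by zero outside $\Omega$.

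The core is a one-dimensional estimate on each line parallel to $e_i$. For $r\ge1$ and $v=|u|^r$, which is Lipschitz with $|\partial_i v|\le r|u|^{r-1}|\partial_i u|$ a.e., we integrate from each end of the segment $(0,d)$. For $0<t<d$ this gives
\[
|u(t,x')|^r\le \int_0^t r|u|^{r-1}|\partial_i u|\,dx_i
\quad\text{and}\quad
|u(t,x')|^r\le \int_t^d r|u|^{r-1}|\partial_i u|\,dx_i ,
\]
and averaging the two bounds yields
\[
|u(t,x')|^r\le \frac r2\int_0^d |u|^{r-1}|\partial_i u|\,dx_i .
\]
This averaging is exactly where the factor $\tfrac12$ comes from, and we regard it as the key step for obtaining the sharp constant. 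We then integrate in $t\in(0,d)$ to get $\int_0^d |u|^r\,dx_i\le \frac{dr}{2}\int_0^d|u|^{r-1}|\partial_iu|\,dx_i$. Integrating further in $x'$ and applying Hölder's inequality with exponents $r/(r-1)$ and $r$ gives
\[
|u|_r^r\le \frac{dr}{2}\,|u|_r^{r-1}\,|\partial_i u|_r .
\]
Dividing by $|u|_r^{r-1}$, which is harmless when $u\neq0$ and trivial otherwise, yields \eqref{API}. When $r=1$, Hölder's step is not needed.

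The main obstacle is the approximation step. A function $u\in W_0^{1,1}(\Omega)$ with $\partial_iu\in L^r$ need not lie in $W_0^{1,r}$, so we cannot simply approximate in $W^{1,r}$. We plan to avoid this by working with truncations and absolute continuity on lines. Take $\varphi_n\in C_c^\infty(\Omega)$ with $\varphi_n\to u$ in $W^{1,1}$. Alternatively, apply the one-dimensional argument directly to $u$, using the ACL characterization of $W^{1,1}$: for a.e. $x'$, the restriction of $u$ is absolutely continuous on the line and vanishes at its endpoints in the zero-extension sense, since the extension by zero of a $W_0^{1,1}$ function belongs to $W^{1,1}(\mathbb R^N)$. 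The line-by-line computation then goes through for the bounded truncations $T_k u=\max\{-k,\min\{u,k\}\}$, for which $|T_k u|^r$ is absolutely continuous and $|\partial_i T_k u|\le|\partial_i u|$. This gives $|T_ku|_r\le \frac{dr}{2}|\partial_iu|_r$ with $|T_ku|_r<\infty$, so the division is legitimate. Letting $k\to\infty$ by monotone convergence completes the proof.
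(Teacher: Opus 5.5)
Your proposal is correct and follows essentially the argument the paper relies on: the paper just cites the directional Poincar\'e inequality of Fragal\`a--Gazzola--Kawohl, which is proved by this same two-sided integration along lines in direction $e_i$, with the averaging giving the factor $\tfrac12$ and H\"older's inequality closing the estimate. Your ACL-plus-truncation step also correctly handles the stated generality $u\in W_0^{1,1}(\Omega)$ with only $\partial_i u\in L^r(\Omega)$. Drop the sentence about $\varphi_n\to u$ in $W^{1,1}$, since that approximation gives no control of $|\partial_i\varphi_n|_r$.
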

\begin{proof}
	See the proof of the directional Poincar\'e inequality in \cite[Section 3, equation (11), pp.~722--723]{FraGaKaw04}.
\end{proof}
With the classical definition of the norm that describes anisotropic Sobolev spaces, we state a sequence of results that will be important throughout this article.

\begin{theorem}\label{FraGaKow04}
Suppose that $\Omega$ is a bounded smooth domain of $\R^N$ and
\eqref{exponent definitions} holds. Then $W_0^{1,\vec p}(\Omega)$ is a
reflexive Banach space. If $\overline p<N$, it is continuously embedded
in $L^r(\Omega)$ for every $1\leq r\leq p_\infty$, and the embedding is
compact for $1\leq r<p_\infty$. If $\overline p\geq N$, it is continuously
and compactly embedded in every $L^r(\Omega)$ with $1\leq r<\infty$.
\end{theorem}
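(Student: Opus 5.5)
The plan has three parts: reflexivity, the embedding into $L^{p_\infty}$, and compactness below $p_\infty$. The space $W_0^{1,\vec p}(\Omega)$ embeds isometrically into $\prod_{i=1}^N L^{p_i}(\Omega)$ through the map $u\mapsto(\partial_1u,\dots,\partial_Nu)$. This map is injective on $W_0^{1,1}(\Omega)$; the directional Poincar\'e inequality \eqref{API} with $r=1$ shows that $\|\cdot\|_{1,\vec p}$ is indeed a norm. Bounded domains make every $L^{p_i}\subset L^1$, so a Cauchy sequence in $\|\cdot\|_{1,\vec p}$ is Cauchy in $W^{1,1}_0$. The $W^{1,1}$-limit then has $\partial_iu\in L^{p_i}$, which gives closedness of the image. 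A closed subspace of a finite product of reflexive spaces ($1<p_i<\infty$) is reflexive, and this settles the first assertion.

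For the continuous embedding when $\overline p<N$, I would invoke Troisi's anisotropic Sobolev inequality \cite{Troi69}:
\[
|u|_{p^*}\leq C\prod_{i=1}^N|\partial_iu|_{p_i}^{1/N}, \qquad u\in C_c^\infty(\Omega).
\]
It extends to $W_0^{1,\vec p}(\Omega)$ by density of $C_c^\infty$, which comes from standard mollification of extensions by zero on a smooth domain. By the arithmetic--geometric mean inequality, $|u|_{p^*}\le C\|u\|_{1,\vec p}$. To reach $p_+$ when $p_+>p^*$, I would use \eqref{API} in the direction $i$ with $p_i=p_+$, which gives $|u|_{p_+}\le C|\partial_iu|_{p_+}$. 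Exponents $1\le r\le p_\infty$ then follow by H\"older on the bounded set $\Omega$. When $\overline p\ge N$, I would lower the exponents: choose $\tilde p_i\le p_i$ with $\overline{\tilde p}<N$ and $\tilde p^*$ as large as desired. The continuous inclusion $W_0^{1,\vec p}\subset W_0^{1,\vec{\tilde p}}$ then yields every $L^r$ with $r<\infty$.

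Compactness is the main step. First, $W_0^{1,\vec p}\subset W_0^{1,p_-}(\Omega)$ continuously with $p_-=\min_ip_i$, and Rellich--Kondrachov gives compactness into $L^1(\Omega)$. For a bounded sequence, I would extract a subsequence converging in $L^1$. For $1\le r<p_\infty$, pick $s$ with $r<s\le p_\infty$ and $s<\infty$ where the embedding is continuous, and interpolate:
\[
|u_n-u_m|_r\le|u_n-u_m|_1^{\theta}|u_n-u_m|_s^{1-\theta}.
\]
This shows the subsequence is Cauchy in $L^r$. The delicate point is the case $p_\infty=p_+>p^*$: there I must make sure $L^{p_+}$ boundedness via \eqref{API} is available, which the argument above supplies. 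The only real obstacle is therefore proving Troisi's inequality itself; I would cite it rather than reprove it, as is done in \cite{FraGaKaw04}, whose Theorem~1 contains precisely this statement.
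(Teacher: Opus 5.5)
Your argument is correct for $N\ge 2$ and is more self-contained than the paper's. The paper simply cites \cite[Theorem~1]{FraGaKaw04} for the whole case $\overline p<N$. You instead rebuild that case from four ingredients:
\begin{itemize}
\item Troisi's inequality, applied on $\R^N$ to mollified zero extensions. This is legitimate because $u\in W_0^{1,1}(\Omega)$, and Fatou passes the bound to the limit.
\item The directional inequality \eqref{API}, to reach $p_+$.
\item The compact embedding $W_0^{1,1}(\Omega)\hookrightarrow L^1(\Omega)$.
\item $L^1$--$L^s$ interpolation.
\end{itemize}
Your reflexivity argument, via the closed image in $\prod_i L^{p_i}(\Omega)$, also covers $\overline p\ge N$, which the paper's proof does not address separately. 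Your reduction for $\overline p\ge N$ by lowering the exponents is the same as the paper's.

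There is one case your argument does not reach: $N=1$. The statement places no restriction on $N$, and for $N=1$ one always has $\overline p=p_1>1=N$. Your reduction then requires $\tilde p_1\le p_1$ with $\overline{\tilde p}=\tilde p_1<1$, which is impossible. Troisi's inequality needs exponents at least $1$, and even $\tilde p_1=1$ gives $\overline{\tilde p}=N$. So for $N=1$ you obtain neither the $L^r$ bounds nor, consequently, the interpolation step for compactness.

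The paper treats this case separately. On an interval $(a,b)$, the identity $u(x)=\int_a^x u'\,dt$ and H\"older give $\|u\|_{L^\infty}\le (b-a)^{1-1/p_1}|u'|_{p_1}$, together with a uniform H\"older modulus of continuity. This yields continuity into every $L^r(\Omega)$ and compactness by Arzel\`a--Ascoli. Adding this line completes your proof.
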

\begin{proof}
	For $\overline p<N$, see \cite[Theorem 1, p.~717]{FraGaKaw04}; the proof is unchanged by a permutation of the directional exponents. If $N=1$, the fundamental theorem of calculus and H\"older's inequality give an $L^\infty$ bound and equicontinuity on the interval $\Omega$, hence compactness in every finite $L^r$. For $N\geq2$, $\overline p\geq N$ and a fixed finite $r$, choose $1<s_i<p_i$ with $\overline s<N$ and $s^*>r$, apply the finite-exponent result to $\vec s$, and use H\"older's inequality on the bounded domain.
\end{proof}

\color{black}
The proof of the theorem below can be seen at \cite{Troi69}:

\begin{theorem}\label{anisotropic inequality}
If $\overline p<N$, there is a constant $C_T>0$, depending only on
$N$ and $\vec p$, such that, for every $u\in W_0^{1,\vec p}(\Omega)$,
\begin{itemize}
\item[(i)] $\displaystyle |u|_{p^*}\leq C_T\prod_{i=1}^N|\partial_i u|_{p_i}^{1/N}$;
\item[(ii)] $\displaystyle |u|_{p^*}\leq\frac{C_T}{N}\sum_{i=1}^N|\partial_i u|_{p_i}$.
\end{itemize}
If $\overline p\geq N$, then for every fixed $1\leq r<\infty$ there is
$C_{r,\Omega,\vec p}>0$ such that
\[
|u|_r\leq C_{r,\Omega,\vec p}\prod_{i=1}^N|\partial_i u|_{p_i}^{1/N}.
\]
\end{theorem}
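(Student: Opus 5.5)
The plan is to prove part (i) by the classical Gagliardo--Nirenberg--Troisi scheme, deduce (ii) from (i), and obtain the case $\overline p\geq N$ by reducing to smaller exponents.

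\emph{Step 1: an $L^1$ product inequality.} For $v\in C_c^1(\mathbb R^N)$ we prove
\[
|v|_{N/(N-1)}\leq\prod_{i=1}^N|\partial_i v|_1^{1/N}.
\]
In each direction, the fundamental theorem of calculus gives $|v(x)|\leq\int_{\mathbb R}|\partial_i v|\,dx_i$. We multiply these $N$ bounds, take the $1/(N-1)$ power, and integrate one variable at a time, applying the generalized H\"older inequality at each stage (the Loomis--Whitney argument). The case $N=1$ is covered by the last statement of the theorem.

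\emph{Step 2: the power trick.} We apply Step 1 to $v=|u|^{s}$ with a single exponent $s$ chosen so that $s\frac{N}{N-1}=p^*$. For $u\in C_c^1$ this gives
\[
|\partial_i v|\leq s|u|^{s-1}|\partial_i u|,
\qquad
|\partial_i v|_1\leq s\,\bigl||u|^{s-1}\bigr|_{p_i'}\,|\partial_i u|_{p_i}.
\]
The difficulty is that the H\"older factors $|u|_{(s-1)p_i'}$ carry different exponents in each direction. The fix is to apply Step 1 with a different power $s_i$ in each direction, i.e. in its weighted form
\[
|u|_{p^*}^{\,\cdot}\leq\prod_i\Bigl(\int|u|^{s_i-1}|\partial_i u|\Bigr)^{1/N},
\]
following Troisi. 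The $s_i$ are chosen so that $(s_i-1)p_i'=p^*$ for every $i$. Then the product inequality closes as
\[
|u|_{p^*}^{A}\leq C\prod_i|u|_{p^*}^{B_i}\,|\partial_i u|_{p_i}^{1/N},
\]
and the exponent balance $A-\sum_i B_i=1$ follows from
\[
\frac1{p^*}=\frac1{\overline p}-\frac1N=\frac1N\sum_i\frac1{p_i}-\frac1N.
\]
Dividing out gives (i) for smooth compactly supported $u$. Density of $C_c^\infty(\Omega)$ in $W_0^{1,\vec p}(\Omega)$ together with Fatou's lemma then extends (i) to all of $W_0^{1,\vec p}(\Omega)$.

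\emph{Step 3: part (ii).} This follows from (i) by the AM--GM inequality:
\[
\prod_i x_i^{1/N}\leq\frac1N\sum_i x_i.
\]

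\emph{Step 4: the case $\overline p\geq N$.} Fix $r<\infty$ and pick $1\leq s_i<p_i$ with $\overline s<N$ and $s^*\geq r$; this is possible since $s^*\to\infty$ as $\overline s\uparrow N$. Applying (i) to $\vec s$ and then H\"older's inequality on the bounded domain gives
\[
|u|_r\leq C|u|_{s^*}\leq C\prod_i|\partial_i u|_{s_i}^{1/N}\leq C'\prod_i|\partial_i u|_{p_i}^{1/N},
\]
where $C'$ depends on $|\Omega|$.

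The main obstacle is Step 2: choosing the directional powers consistently and verifying the exponent bookkeeping. If the weighted scheme becomes unwieldy, the fallback is to cite Troisi \cite{Troi69}, as the paper itself does.
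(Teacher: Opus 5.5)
\textbf{Gap: the case $N=1$.} The case $\overline p\geq N$ is not proved when $N=1$. You defer $N=1$ to the last statement of the theorem, and that is correct: $p_1>1$ forces $\overline p=p_1>1=N$. But your Step 4 cannot handle it. Step 4 needs exponents with $\overline s<N=1$. In one dimension, however, $\overline s=s_1\geq1$ for every admissible choice, so the limit $\overline s\uparrow N$ is unavailable and there is no Troisi inequality to reduce to. Hence the bound $|u|_r\leq C\,|u'|_{p_1}$ on a bounded interval is never established.

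The fix is the one the paper uses. On an interval, the fundamental theorem of calculus and H\"older's inequality give $|u|_\infty\leq\int_\Omega|u'|\,dx\leq|\Omega|^{1-1/p_1}|u'|_{p_1}$. Hence $|u|_r\leq|\Omega|^{1/r+1-1/p_1}|u'|_{p_1}$ for every finite $r$.

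For $N\geq2$, your Step 4 coincides with the paper's argument. It is cleaner to take $1<s_i<p_i$, so that (i) is applied within its stated range $1<p_i$. This is possible: choose $1/s_i\in(1/p_i,1)$ with $\sum_i1/s_i=1+\varepsilon$. This is admissible because $\sum_i1/p_i\leq1<1+\varepsilon<N$, and it gives $s^*=N/\varepsilon$.

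\textbf{The rest is correct.} For $\overline p<N$ (which forces $N\geq2$), you reprove Troisi's inequality, whereas the paper simply cites \cite{Troi69}. Your bookkeeping closes. With $s_i=1+p^*/p_i'$ one has $\sum_is_i=(N-1)p^*$. Apply the Gagliardo--Loomis--Whitney lemma to the $N$ different functions $g_i(\hat x_i)=s_i\int_{\R}|u|^{s_i-1}|\partial_iu|\,dx_i$, where $\hat x_i$ omits the $i$-th coordinate. This, rather than Step 1 for a single $v$, is the ``weighted form'' you need. It yields $|u|_{p^*}^{p^*(N-1)/N}\leq\prod_i\bigl(s_i\int|u|^{s_i-1}|\partial_iu|\,dx\bigr)^{1/N}$. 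After H\"older, $A-\sum_iB_i=p^*(1-1/N)-p^*(1-1/\overline p)=1$, as you claimed.

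Your route gives a self-contained proof with an explicit constant depending only on $N$ and $\vec p$; the paper's citation is shorter. The density-plus-Fatou extension and the AM--GM step for (ii) match the paper.
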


\begin{proof}
The first inequality is Troisi's inequality, and the second follows by
the arithmetic--geometric mean inequality. If $N=1$, the remaining assertion follows from the one-dimensional $L^\infty$ bound and H\"older's inequality. If $N\geq2$,
choose $\vec s$ as in the preceding proof, apply Troisi at $s^*>r$, and
use H\"older's inequality to replace the directional $s_i$-norms by the
$p_i$-norms and the $s^*$-norm of $u$ by its $r$-norm. This explains the
dependence on $r$ and $\Omega$.
\end{proof}

We denote $\overrightarrow{\gamma}=(\gamma_1,\ldots,\gamma_N) \in \R^N$, imposing the following condition for each $1\leq i\leq N$:
$$\gamma_i= (1-H(\zeta_i))p_i+ H(\zeta_i)q_i.$$

We also set
\[
\gamma_-:=\min_{1\leq i\leq N}\gamma_i
\qquad\mbox{and}\qquad
\gamma_+:=\max_{1\leq i\leq N}\gamma_i.
\]

Set $\mathscr I_q:=\{i:H(\zeta_i)=1\}$. In all sums of $q_i$-norms below, only indices in $\mathscr I_q$ are included; a norm in an inactive phase is never evaluated. For several results contained in this work, it will be useful to consider the following equivalent norm on the space $W_0^{1,\overrightarrow{\gamma}}(\Omega)$ by taking the following definition:

\begin{equation}\label{norm 2 sobolev space}
||u||:= ||u||_{1,\overrightarrow{p}}+\displaystyle\sum_{i\in\mathscr I_q}\left|\partial_i u\right|_{q_i}
\end{equation}
where $\overrightarrow{p}=(p_1,p_2,...,p_N)$. Since $\Omega$ is bounded and $q_i>p_i$, we have
$L^{q_i}(\Omega)\hookrightarrow L^{p_i}(\Omega)$ for every
$i\in\mathscr I_q$. Hence the norm in \eqref{norm 2 sobolev space}
is equivalent to the standard norm of
$W_0^{1,\overrightarrow{\gamma}}(\Omega)$.
Whenever there is no possibility of confusion, we will denote $(W_0^{1,\overrightarrow{\gamma}}(\Omega),||\cdot||)$ simply by $W_0^{1,\overrightarrow{\gamma}}(\Omega)$. 

Combining \eqref{desig of item a1} with the definition of the effective
exponents $\gamma_i$, we see that the energy associated with the principal
part is well defined on $W_0^{1,\overrightarrow{\gamma}}(\Omega)$.
More precisely, for every
$u\in W_0^{1,\overrightarrow{\gamma}}(\Omega)$,
\begin{equation}\label{energy-growth}
\begin{aligned}
\sum_{i=1}^{N}\frac{\alpha_i}{p_i}
\left\|\partial_i u\right\|_{p_i}^{p_i}
+
\sum_{i=1}^{N}
H(\zeta_i)\frac{\theta_i}{q_i}
\left\|\partial_i u\right\|_{q_i}^{q_i}
&\leq
\sum_{i=1}^{N}\frac1{p_i}
\int_\Omega
A_i\left(
\left|\partial_i u\right|^{p_i}
\right)\,dx
\\
&\leq
\sum_{i=1}^{N}\frac{\beta_i}{p_i}
\left\|\partial_i u\right\|_{p_i}^{p_i}
+
\sum_{i=1}^{N}
H(\zeta_i)\frac{\theta_i}{q_i}
\left\|\partial_i u\right\|_{q_i}^{q_i}.
\end{aligned}
\end{equation}
In particular, if $H(\zeta_i)=0$, then $\gamma_i=p_i$ and only the
$p_i$-growth is present in the $i$-th direction, whereas if
$H(\zeta_i)=1$, then $\gamma_i=q_i$ and both the $p_i$- and
$q_i$-terms are controlled by the working space.

For later use in the variational arguments, we establish the following compactness property of the principal operator. The proof follows the standard $(S_+)$ argument for Leray--Lions operators developed in \cite{LeSplus}, together with its anisotropic version given in \cite{Boureanu2023}.

\begin{proposition}\label{Splus}
Assume that $(a_1)-(a_2)$ hold. Then the operator
$\mathcal A:W_0^{1,\vec{\gamma}}(\Omega)
\to\left(W_0^{1,\vec{\gamma}}(\Omega)\right)^*$ defined by
\[
\langle\mathcal A(u),v\rangle
=
\sum_{i=1}^{N}\int_\Omega
a_i(|\partial_i u|^{p_i})
|\partial_i u|^{p_i-2}\partial_i u\,
\partial_i v\,dx
\]
is continuous, strictly monotone and of type $(S_+)$.
\end{proposition}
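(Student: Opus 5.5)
The argument reduces everything to the scalar flux $\phi_i(t):=a_i(|t|^{p_i})|t|^{p_i-2}t$ and runs in three steps: well-definedness and continuity, strict monotonicity, and the $(S_+)$ property.

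\textbf{Well-definedness and continuity.} By $(a_1)$,
\[
|\phi_i(t)|\le \beta_i|t|^{p_i-1}+H(\zeta_i)\theta_i|t|^{q_i-1}.
\]
Hence, if $\partial_i u\in L^{\gamma_i}$, then $\phi_i(\partial_i u)\in L^{p_i'}+H(\zeta_i)L^{q_i'}$. Since $\Omega$ is bounded and $\gamma_i\ge p_i$, this set lies in $L^{\gamma_i'}(\Omega)$. By H\"older's inequality, $\mathcal A(u)$ is then a bounded linear functional with respect to the norm \eqref{norm 2 sobolev space}.

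For continuity, take $u_n\to u$. Then $\partial_i u_n\to\partial_i u$ in $L^{\gamma_i}$. Along a subsequence we have a.e. convergence and a dominating function in $L^{\gamma_i}$. The growth bound together with the continuity of $a_i$ gives $\phi_i(\partial_i u_n)\to\phi_i(\partial_i u)$ in $L^{\gamma_i'}$ by dominated convergence. The subsequence principle then yields convergence of the whole sequence in the dual.

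\textbf{Strict monotonicity.} By $(a_2)$, $\phi_i$ is strictly increasing on $\R$. It is odd, and $|\phi_i(t)|=a_i(|t|^{p_i})|t|^{p_i-1}$ is a product of a positive nondecreasing function and a strictly increasing one. Therefore
\[
(\phi_i(s)-\phi_i(t))(s-t)>0\quad\text{for } s\ne t.
\]
Summing over $i$ and integrating gives $\langle\mathcal A u-\mathcal A v,u-v\rangle\ge0$. Equality forces $\partial_i u=\partial_i v$ a.e. for every $i$, hence $u=v$ because $u-v\in W_0^{1,1}$ (use the Poincar\'e inequality \eqref{API}).

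\textbf{$(S_+)$ property.} Let $u_n\rightharpoonup u$ with $\limsup\langle\mathcal A u_n,u_n-u\rangle\le0$. Weak convergence gives $\langle\mathcal A u,u_n-u\rangle\to0$. Therefore
\[
\int_\Omega\sum_i(\phi_i(\partial_i u_n)-\phi_i(\partial_i u))(\partial_i u_n-\partial_i u)\to0.
\]
The integrand is nonnegative, so it tends to zero in $L^1$, and along a subsequence a.e. Strict monotonicity of $\phi_i$ plus coercivity then give $\partial_i u_n\to\partial_i u$ a.e. Concretely, if $\partial_i u_n(x)$ were unbounded, the integrand at $x$ would blow up because $\phi_i(t)t\ge\alpha_i|t|^{p_i}$; so the sequence is bounded at $x$. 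Any limit point $\xi$ then satisfies $(\phi_i(\xi)-\phi_i(\partial_iu))(\xi-\partial_i u)=0$, hence $\xi=\partial_i u(x)$.

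The main obstacle is upgrading a.e. convergence to norm convergence, i.e. $\int|\partial_i u_n|^{\gamma_i}\to\int|\partial_i u|^{\gamma_i}$. I would set
\[
\Phi_i(t):=\phi_i(t)t\ge \alpha_i|t|^{p_i}+H(\zeta_i)\theta_i|t|^{q_i}\ge c|t|^{\gamma_i}.
\]
Expanding the vanishing integral gives
\[
\int\Phi_i(\partial_i u_n)=\int\phi_i(\partial_iu_n)\partial_iu+\int\phi_i(\partial_iu)(\partial_iu_n-\partial_iu)+o(1).
\]
The middle term is $o(1)$ by weak convergence. For the first term: $\phi_i(\partial_i u_n)$ is bounded in $L^{\gamma_i'}$ and converges a.e., so it converges weakly in $L^{\gamma_i'}$ to $\phi_i(\partial_i u)$. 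Hence $\limsup\int\sum_i\Phi_i(\partial_iu_n)\le\int\sum_i\Phi_i(\partial_iu)$, and Fatou's lemma gives the reverse inequality. So $\Phi_i(\partial_iu_n)\to\Phi_i(\partial_iu)$ in $L^1$, using Fatou summed over $i$ and then componentwise. Since $c|\partial_i u_n|^{\gamma_i}\le\Phi_i(\partial_iu_n)$, the family $|\partial_iu_n|^{\gamma_i}$ is uniformly integrable by the generalized dominated convergence theorem. Vitali's theorem then yields $\partial_iu_n\to\partial_iu$ in $L^{\gamma_i}$. On $\mathscr I_q$ this also gives convergence in $L^{p_i}$, since the domain is bounded. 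Finally, the subsequence argument gives convergence of the full sequence in $\|\cdot\|$.
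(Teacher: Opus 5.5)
Your proof is correct, but your route to $(S_+)$ differs from the paper's. The paper obtains monotonicity from the convexity of $\Psi$ (via Zeidler), and then differentiates the flux $b_i$ on $\R\setminus\{0\}$. Using $a_i'\ge0$ and the lower bound in $(a_1)$, it shows $b_i'(t)\ge(p_i-1)\alpha_i|t|^{p_i-2}+H(\zeta_i)(p_i-1)\theta_i|t|^{q_i-2}$. This gives the comparison \eqref{bi-monotonicity} of $b_i$ with the pure power fluxes $|t|^{m-2}t$, $m\in\{p_i,q_i\}$. Then $(S_+)$ follows from the classical uniform monotonicity inequalities for these fluxes: directly for $m\ge2$, and with H\"older for $1<m<2$. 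Strong convergence of each active phase is thus read off from explicit inequalities. The paper needs no a.e. convergence of gradients, no weak convergence of fluxes, and no Vitali argument.

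You instead run the qualitative Leray--Lions/Browder scheme:
\begin{itemize}
\item pointwise strict monotonicity plus coercivity give $\partial_iu_n\to\partial_iu$ a.e.;
\item bounded, a.e.-convergent fluxes converge weakly in $L^{\gamma_i'}$;
\item Scheff\'e's lemma and Vitali's theorem upgrade $\int\Phi_i(\partial_iu_n)\to\int\Phi_i(\partial_iu)$ to convergence in $L^{\gamma_i}$.
\end{itemize}
The paper's argument is shorter and quantitative, but it uses $a_i\in C^1((0,\infty))$ and the specific two-power form of $(a_1)$ to compare with pure power fluxes. Yours uses only continuity and monotonicity of $a_i$, the growth bound, and $\Phi_i(t)\ge c|t|^{\gamma_i}$. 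It would therefore survive for merely continuous coefficients or more general Carath\'eodory fluxes. Your Nemytskii-type proof of continuity of $\mathcal A$ also supplies what the paper only asserts through $\Psi\in C^1$.

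One small point in your a.e.-convergence step needs tightening. The bound $\phi_i(t)t\ge\alpha_i|t|^{p_i}$ alone does not control the cross term $\phi_i(\xi)\,\partial_iu(x)$, since $|\phi_i(\xi)|$ may grow like $|\xi|^{q_i-1}$. Write instead
\[
(\phi_i(\xi)-\phi_i(\eta))(\xi-\eta)\ \ge\ |\phi_i(\xi)|\,(|\xi|-|\eta|)-|\phi_i(\eta)|\,|\xi|
\ \ge\ \alpha_i|\xi|^{p_i-1}(|\xi|-|\eta|)-|\phi_i(\eta)|\,|\xi|
\]
for $|\xi|>|\eta|$. The right-hand side tends to $+\infty$ as $|\xi|\to\infty$ because $p_i>1$, and with this the argument goes through.
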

\begin{proof}
Consider
\[
\Psi(u):=
\sum_{i=1}^{N}\frac{1}{p_i}
\int_{\Omega}
A_i\left(\left|\partial_i u\right|^{p_i}\right)dx.
\]
By \eqref{desig of item a1}, $\Psi$ is well defined in
$W_0^{1,\vec{\gamma}}(\Omega)$. Moreover, if
\(
h_i(t)=\frac{1}{p_i}A_i(|t|^{p_i}),
\)
then, for $t\neq0$,
\[
h_i''(t)
=
p_i a_i'(|t|^{p_i})|t|^{2p_i-2}
+
(p_i-1)a_i(|t|^{p_i})|t|^{p_i-2}>0.
\]
The derivative $h_i'(t)=a_i(|t|^{p_i})|t|^{p_i-2}t$ extends continuously through zero with value zero. Its monotonicity on both open half-lines therefore extends across zero. Hence, by $(a_2)$ and the positivity of $a_i$, $\Psi$ is convex. No derivative of $a_i$ at zero is used.
Furthermore, from $(a_1)$,
\[
a_i(|t|^{p_i})|t|^{p_i-1}
\leq
\beta_i|t|^{p_i-1}
+
H(\zeta_i)\theta_i|t|^{q_i-1},
\]
and therefore
$\Psi\in C^1(W_0^{1,\vec{\gamma}}(\Omega),\R)$, with
\[
\langle\Psi'(u),v\rangle
=
\sum_{i=1}^{N}\int_{\Omega}
a_i(|\partial_i u|^{p_i})
|\partial_i u|^{p_i-2}\partial_i u\,
\partial_i v\,dx.
\]
Thus $\Psi'=\mathcal A$. By the convexity of $\Psi$ and
\cite[Proposition 42.6]{Zeidler}, $\mathcal A$ is monotone.

Now, let $(u_n)\subset W_0^{1,\vec{\gamma}}(\Omega)$ be such that
\(
u_n\rightharpoonup u
~\mbox{in }W_0^{1,\vec{\gamma}}(\Omega)
\)
and
\[
\limsup_{n\to\infty}
\langle\mathcal A(u_n),u_n-u\rangle\leq0.
\]
Since $\mathcal A(u)\in
(W_0^{1,\vec{\gamma}}(\Omega))^*$, we have
\[
\langle\mathcal A(u),u_n-u\rangle\to0.
\]
It follows that
\[
\limsup_{n\to\infty}
\langle\mathcal A(u_n)-\mathcal A(u),u_n-u\rangle\leq0.
\]
On the other hand, by the monotonicity of $\mathcal A$,
\[
\langle\mathcal A(u_n)-\mathcal A(u),u_n-u\rangle\geq0.
\]
Therefore,
\begin{equation}\label{Splus-limit}
\langle\mathcal A(u_n)-\mathcal A(u),u_n-u\rangle\to0.
\end{equation}

For $i\in\{1,\ldots,N\}$, put
\(
b_i(t)=a_i(|t|^{p_i})|t|^{p_i-2}t.
\)
From $(a_1)$ and $(a_2)$, for $t\neq0$,
\[
\begin{aligned}
b_i'(t)
&=
p_i a_i'(|t|^{p_i})|t|^{2p_i-2}
+
(p_i-1)a_i(|t|^{p_i})|t|^{p_i-2}
\\
&\geq
(p_i-1)\alpha_i|t|^{p_i-2}
+
H(\zeta_i)(p_i-1)\theta_i|t|^{q_i-2}.
\end{aligned}
\]
Hence, the function
\[
t\mapsto
b_i(t)-\alpha_i|t|^{p_i-2}t
-
H(\zeta_i)\frac{(p_i-1)\theta_i}{q_i-1}
|t|^{q_i-2}t
\]
is nondecreasing on each open half-line and continuous at zero, so it is nondecreasing on $\R$. Therefore, for every $\xi,\eta\in\R$,
\begin{equation}\label{bi-monotonicity}
\begin{aligned}
\bigl[b_i(\xi)-b_i(\eta)\bigr](\xi-\eta)
&\geq
\alpha_i
\left(
|\xi|^{p_i-2}\xi-|\eta|^{p_i-2}\eta
\right)(\xi-\eta)
\\
&\quad+
H(\zeta_i)\frac{(p_i-1)\theta_i}{q_i-1}
\left(
|\xi|^{q_i-2}\xi-|\eta|^{q_i-2}\eta
\right)(\xi-\eta).
\end{aligned}
\end{equation}
Since the first term on the right-hand side of \eqref{bi-monotonicity} is strictly positive whenever
$\xi\ne\eta$, it follows that $\mathcal A$ is strictly monotone.
Using \eqref{Splus-limit} and \eqref{bi-monotonicity}, we obtain
\begin{equation}\label{p-monotonicity-limit}
\begin{aligned}
\int_{\Omega}
\left(
|\partial_i u_n|^{p_i-2}\partial_i u_n
-
|\partial_i u|^{p_i-2}\partial_i u
\right)
(\partial_i u_n-\partial_i u)\,dx
\to0
\end{aligned}
\end{equation}
for every $i\in\{1,\ldots,N\}$. Moreover, if $H(\zeta_i)=1$, then
\begin{equation}\label{q-monotonicity-limit}
\begin{aligned}
\int_{\Omega}
\left(
|\partial_i u_n|^{q_i-2}\partial_i u_n
-
|\partial_i u|^{q_i-2}\partial_i u
\right)
(\partial_i u_n-\partial_i u)\,dx
\to0.
\end{aligned}
\end{equation}

For $m>1$, we have
\[
\left(
|\xi|^{m-2}\xi-|\eta|^{m-2}\eta
\right)(\xi-\eta)
\geq
\begin{cases}
C|\xi-\eta|^m, & m\geq2,\\[2mm]
\displaystyle
C\frac{|\xi-\eta|^2}
{(|\xi|+|\eta|)^{2-m}}, & 1<m<2.
\end{cases}
\]
For $1<m<2$, the quotient is defined as zero when $\xi=\eta=0$.
Thus, if $m\geq2$, \eqref{p-monotonicity-limit} and
\eqref{q-monotonicity-limit} give directly the corresponding strong
convergence in $L^m(\Omega)$.

If $1<m<2$, we have
\[
\int_{\Omega}
\frac{|v_n-v|^2}
{(|v_n|+|v|)^{2-m}}\,dx\to0,
\]
where $v_n=\partial_i u_n$ and $v=\partial_i u$. By H\"older's
inequality,
\[
\begin{aligned}
\int_{\Omega}|v_n-v|^m\,dx
&\leq
\left(
\int_{\Omega}
\frac{|v_n-v|^2}
{(|v_n|+|v|)^{2-m}}\,dx
\right)^{m/2}
\\
&\quad\times
\left(
\int_{\Omega}(|v_n|+|v|)^m\,dx
\right)^{(2-m)/2}.
\end{aligned}
\]
Since $(u_n)$ is bounded in
$W_0^{1,\vec{\gamma}}(\Omega)$, it follows that
\[
v_n\to v\quad\mbox{in }L^m(\Omega).
\]
Hence,
\[
\partial_i u_n\to\partial_i u
\quad\mbox{in }L^{p_i}(\Omega),
\qquad i=1,\ldots,N,
\]
and, whenever $H(\zeta_i)=1$,
\[
\partial_i u_n\to\partial_i u
\quad\mbox{in }L^{q_i}(\Omega).
\]
Therefore,
\[
u_n\to u
\quad\mbox{in }W_0^{1,\vec{\gamma}}(\Omega).
\]
Thus $\mathcal A$ is of type $(S_+)$.
\end{proof}

\section{{Comparison principle}}
In this section, we adapt arguments concerning sub- and supersolutions to the present class of anisotropic operators, following some of the technical ideas used by Carvalho et al. \cite{CarvalhoM} in the proof of their comparison principle.

When $a_i\equiv1$, Alves and El Hamidi established a comparison principle in \cite{AlvesHam08}, based on ideas from \cite{MW}.
\begin{definition}
Let $u\in W^{1,\vec{\gamma}}(\Omega)$ be nonnegative. We say that $u$ is a weak subsolution (supersolution) of problem \eqref{P} if $f(x,u(x))$ is well defined for a.e. $x\in\Omega$, \(f(x,u)\varphi\in L^1(\Omega)\) for every nonnegative $\varphi\in W_0^{1,\vec{\gamma}}(\Omega)$, and
\[
\sum_{i=1}^{N}\int_{\Omega}
a_i(|\partial_i u|^{p_i})
|\partial_i u|^{p_i-2}\partial_i u\,\partial_i\varphi\,dx
\leq \int_{\Omega}f(x,u)\varphi\,dx
\]
for a subsolution, with the reverse inequality for a supersolution. When $f(x,t)$ is defined only for $t>0$, as in singular problems, the requirement that $f(x,u)$ be well defined a.e. entails $u>0$ a.e. in $\Omega$. No value of $f(x,0)$ is prescribed.
\end{definition}

\begin{theorem}\label{Comparison principle}
Suppose that $(a_1)-(a_2)$ hold and that the map
\[
t\in(0,+\infty)\longmapsto
\frac{f(x,t)}{t^{p_1-1}}
\]
is strictly decreasing for a.e. $x\in\Omega$. Let $u_1,u_2\in W_0^{1,\vec{\gamma}}(\Omega)$ be nonnegative weak sub- and supersolutions of problem \eqref{P}, respectively. Assume that there exists $C>0$ such that \(u_1\leq Cu_2~\mbox{a.e. in }\Omega.\) Then \(u_1\leq u_2~\mbox{a.e. in }\Omega.\)
\end{theorem}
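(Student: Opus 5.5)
The plan is to run a Díaz--Saá argument adapted to the anisotropic, nonhomogeneous structure, working with the substitution $w=u^{p_1}$. On the set $\{u_1>u_2\}$ we use the test functions
\[
\varphi_1=\frac{(u_1^{p_1}-u_2^{p_1})^+}{u_1^{p_1-1}},\qquad
\varphi_2=\frac{(u_1^{p_1}-u_2^{p_1})^+}{u_2^{p_1-1}},
\]
both set equal to zero off $\{u_1>u_2\}$. We test the subsolution inequality for $u_1$ with $\varphi_1$ and the supersolution inequality for $u_2$ with $\varphi_2$, then subtract. The right-hand side becomes
\[
\int_{\{u_1>u_2\}}\Bigl(\frac{f(x,u_1)}{u_1^{p_1-1}}-\frac{f(x,u_2)}{u_2^{p_1-1}}\Bigr)(u_1^{p_1}-u_2^{p_1})\,dx .
\]
The integrand is strictly negative wherever $u_1>u_2$, by the strict decrease of $t\mapsto f(x,t)/t^{p_1-1}$. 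So the whole argument reduces to showing that the corresponding left-hand side is $\geq 0$. That yields $|\{u_1>u_2\}|=0$.

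\emph{Admissibility of the test functions.} This is where the hypothesis $u_1\leq Cu_2$ enters.
\begin{itemize}
\item On $\{u_1>u_2\}$ we have $u_2<u_1\leq Cu_2$, so $u_2>0$ there, and both ratios $u_1/u_2$ and $u_2/u_1$ lie in $[C^{-1},1]\cup[1,C]$.
\item Hence $0\leq\varphi_1\leq u_1$ and $0\leq\varphi_2\leq C^{p_1}u_2$.
\item Differentiating, $\partial_i\varphi_1$ is a combination of $\partial_i u_1$ and $\partial_i u_2$ with coefficients that are bounded functions of $u_2/u_1$; similarly for $\partial_i\varphi_2$. So $\varphi_j\in W_0^{1,\vec\gamma}(\Omega)$.
\end{itemize}
To be safe, I would first carry out the argument with $u_j$ replaced by $u_j+\varepsilon$ in the denominators. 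I would then pass to the limit $\varepsilon\to0$ by dominated convergence, using the integrability condition $f(x,u_j)\varphi\in L^1$ from the definition of sub/supersolution and Fatou's lemma on the strictly negative right-hand side.

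\emph{Sign of the left-hand side (the main obstacle).} I would obtain this from a hidden convexity. For each $i$ set
\[
\Phi_i(s)=\tfrac1{p_i}A_i\bigl(s^{p_i/p_1}\bigr),\qquad s\geq0 .
\]
Since $a_i$ is nondecreasing by $(a_2)$, $A_i$ is convex and increasing. Since $p_i\geq p_1$, the map $s\mapsto s^{p_i/p_1}$ is convex and increasing. Hence $\Phi_i$ is convex and nondecreasing. Moreover, $w\mapsto|\partial_i(w^{1/p_1})|^{p_1}$ is convex on nonnegative functions: this is the one-dimensional-gradient case of the classical Díaz--Saá/Picone convexity. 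It follows that
\[
\mathcal W(w)=\sum_{i=1}^{N}\int_\Omega\Phi_i\bigl(|\partial_i w^{1/p_1}|^{p_1}\bigr)\,dx
\]
is convex on the cone of nonnegative $w$ with $w^{1/p_1}\in W_0^{1,\vec\gamma}(\Omega)$.

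Next, write $w_j=u_j^{p_1}$ and consider the two pairs $(w_2,\max\{w_1,w_2\})$ and $(w_1,\min\{w_1,w_2\})$. Their differences are $\pm(w_1-w_2)^+$. Monotonicity of the directional derivatives of the convex functional $\mathcal W$ along these segments gives
\[
\langle \mathcal W'(w_1)-\mathcal W'(w_2),(w_1-w_2)^+\rangle\geq0 .
\]
Since $\Phi_i'\bigl(|t|^{p_1}\bigr)\,p_1|t|^{p_1-2}t=a_i(|t|^{p_i})|t|^{p_i-2}t$, this derivative equals $p_1^{-1}$ times exactly the difference of the two tested weak formulations.

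A cleaner alternative, which I would try first in case the Gateaux differentiability of $\mathcal W$ near the boundary of the cone is delicate, is to prove the inequality pointwise. For each $i$ I would check the Picone-type inequality
\[
b_i(\partial_i u_1)\,\partial_i\varphi_1-b_i(\partial_i u_2)\,\partial_i\varphi_2\geq0
\]
on $\{u_1>u_2\}$. By the same convexity of $s\mapsto\Phi_i(s)$ composed with $|\partial_i w^{1/p_1}|^{p_1}$, this reduces to a one-variable inequality along the segment $t\mapsto (1-t)w_2+tw_1$. Either route gives the left-hand side $\geq0$, which contradicts the strictly negative right-hand side unless $\{u_1>u_2\}$ is null. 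This proves $u_1\leq u_2$ a.e.
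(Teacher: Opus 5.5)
Your proposal is correct and follows the paper's proof essentially step by step. The paper's functional $J$ is exactly your $\mathcal W$, and its convexity comes from the same pointwise convexity of $w\mapsto|\partial_i(w^{1/p_1})|^{p_1}$ together with monotonicity and convexity of $t\mapsto A_i(t^{p_i/p_1})$. The paper uses the same test functions $\varphi_j=(u_1^{p_1}-u_2^{p_1})/u_j^{p_1-1}$ on $\{u_1>u_2\}$, justified by $u_1\le Cu_2$, and obtains the sign from convexity along the segments from $w_2$ to $\max\{w_1,w_2\}$ and from $w_1$ to $\min\{w_1,w_2\}$.

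The one step you leave implicit is the locality identity $J(\max\{w_1,w_2\})+J(\min\{w_1,w_2\})=J(w_1)+J(w_2)$. Equivalently, the one-sided derivative at $\max\{w_1,w_2\}$ in the direction $(w_1-w_2)^+$ coincides with the one at $w_1$. This identity is what actually combines the two one-sided convexity inequalities into the sign statement. Your $\varepsilon$-regularization is not needed under the hypothesis $u_1\le Cu_2$.
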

\begin{proof}
Initially,  we consider the functional $J:L^1(\Omega)\to (-\infty, +\infty]$ defined by 
\begin{equation}\label{problem nonlinear power part}
 J(u)=\left\{ \begin{array}{ll}
         \displaystyle\sum_{i=1}^{N}\frac{1}{p_i}\displaystyle\int_{\Omega}A_i\left(\left|\partial_i(u^{\frac{1}{p_1}})\right|^{p_i}\right)dx, &\mbox{if}~u\geq 0\, \,\mbox{and}\,\,\displaystyle u^{\frac{1}{p_1}}\in W_0^{1,\vec{\gamma}}(\Omega) ; \\
        \infty, & \mbox{otherwise}.\end{array} \right.
\end{equation}
We shall prove that the effective domain of $J$, i.e. the set
$$D(J):=\{u\in L^1(\Omega): u\geq 0 \,\,\mbox{and}\,\,\displaystyle u^{\frac{1}{p_1}}\in W_0^{1,\vec{\gamma}}(\Omega)\},$$
is non-empty, i.e., the functional $J\not\equiv \infty$. For this purpose, let $x_0\in\Omega$ and $\varepsilon>0$ be such that $B_\varepsilon(x_0)\subset \Omega$. Consider the function
\begin{equation}
 v_\varepsilon(x)=\left\{ \begin{array}{ll}
 \overline{v_\varepsilon}^{p_1+1}(|x-x_0|) & x\in B_\varepsilon(x_0) ;\\
 0, & x\in \Omega \setminus B_\varepsilon(x_0).\end{array} \right.
\end{equation}
where $\overline{v_\varepsilon}:[0,\varepsilon]\to\R$ is defined by 
\begin{equation}
 \overline{v_\varepsilon}(t)=\left\{ \begin{array}{ll}
 1 ,& t=0;\\
 \mbox{linear}, & 0<t<\displaystyle\frac{\varepsilon}{2};\\
 0, & \displaystyle\frac{\varepsilon}{2}\leq t\leq \varepsilon.\end{array} \right.
\end{equation}

It follows from the Inequality (\ref{desig of item a1}) that 
\begin{eqnarray}	
	J(v_\varepsilon)=\sum_{i=1}^{N}\frac{1}{p_i}\displaystyle\int_{\Omega}A_i\left(\left|\partial_i (v_\varepsilon^{\frac{1}{p_1}})\right|^{p_i}\right)dx &=& \sum_{i=1}^{N}\frac{1}{p_i}\displaystyle\int_{\Omega}A_i\left(\left(\frac{p_1+1}{p_1}\right)^{p_i}\overline{v_\varepsilon}^{\left(\frac{p_1+1}{p_1}- 1\right)p_i} \left|\partial_i\overline{v}_\varepsilon\right|^{p_i}\right)dx\nonumber\\
	&\leq& \nonumber \sum_{i=1}^{N}\frac{\beta_i}{p_i}\left( \frac{p_1+1}{p_1}\right)^{p_i}\int_\Omega \overline{v}_\varepsilon^{\frac{p_i}{p_1}}\left| \partial_i \overline{v}_\varepsilon \right|^{p_i}dx\\
	&+&\nonumber \sum_{i=1}^{N}\frac{\theta_i}{q_i}\left(\frac{p_1+1}{p_1}\right)^{q_i}H(\zeta_i)\int_{\Omega}\overline{v}_\varepsilon^{\frac{q_i}{p_1}}\left| \partial_i\overline{v}_\varepsilon\right|^{q_i}dx.
\end{eqnarray}
Since $0\leq \overline{v}_\varepsilon\leq 1$, we conclude that there exists $c>0$ such that
$$
J(v_\varepsilon)\leq c \sum_{i=1}^{N}\left( \int_{\Omega}\left| \partial_i\overline{v}_\varepsilon\right|^{p_i}dx +\int_{\Omega}\left| \partial_i\overline{v}_\varepsilon\right|^{q_i}dx \right) <\infty.
$$
Consequently, $D(J)\neq\emptyset$.

Next, we shall prove that $J$ is convex. Let  $w_1,w_2\in D(J)$. Define $z_j=w_j^{1/p_1},~j=1, 2$, and
$$z_3=(tw_1+(1-t)w_2)^{1/p_1}$$
where $t\in(0,1)$. We have
$$
\displaystyle\partial_i z_3=
\dfrac{1}{p_1}z_3^{1-p_1}\left(t\partial_i w_1+(1-t)\partial_i w_2\right), \;\mbox{a.e. in}\; [z_3>0].
$$
Consequently,
$$
z_3^{p_1-1}\left| \partial_i z_3 \right|\leq \frac{t}{p_1}\left| \partial_i w_1 \right|+ \frac{(1-t)}{p_1}\left| \partial_i w_2 \right|.
$$
Since
$$
\partial_i w_j= p_1z_j^{p_1-1}\partial_i z_j, j=1,2,
$$
we obtain
$$
z_3^{p_1-1}\left|\partial_i z_3\right|\leq tz_1^{p_1-1}\left| \partial_i z_1 \right|+(1-t)z_2^{p_1-1}\left| \partial_i z_2 \right|.
$$
We can write
$$
z_3^{p_1-1}\left|\partial_i z_3\right|\leq \left( t^{\frac{p_1-1}{p_1}}z_1^{p_1-1}\right)t^{\frac{1}{p_1}}\left| \partial_i z_1\right|+\left((1- t)^{\frac{p_1-1}{p_1}}z_2^{p_1-1}\right)(1-t)^{\frac{1}{p_1}}\left| \partial_i z_2\right|.
$$
Using H\"older's inequality in $\R^2$ with conjugate exponents $\frac{p_1}{p_1-1}$ and $p_1$, where $|(x,y)|_{p_1}=\left(|x|^{p_1}+|y|^{p_1}\right)^{1/p_1}$, to the vectors
$$
\left( (t^{1/p_1}z_1)^{p_1-1}, ((1-t)^{1/p_1}z_2)^{p_1-1}\right)\,\,\mbox{and}\,\, \left( t^{1/p_1}\left|\partial_i z_1\right|, (1-t)^{1/p_1}\left| \partial_i z_2\right| \right)
$$
we deduce that
$$
\begin{array}{lll}
	z_3^{p_1-1}\left| \partial_i z_3\right|&\leq& [(t^{1/p_1}z_1)^{p_1}+((1-t)^{1/p_1}z_2)^{p_1}]^{\frac{p_1-1}{p_1}}. \left[t\left| \partial_i z_1 \right|^{p_1}+(1-t)\left| \partial_i z_2 \right|^{p_1}\right]^{1/p_1}\\
	&=& [tw_1+(1-t)w_2]^{\frac{p_1-1}{p_1}}.\left[ t\left| \partial_i z_1 \right|^{p_1}+(1-t)\left| \partial_i z_2 \right|^{p_1} \right]^{1/p_1}\\
	&=& z_3^{p_1-1}\left[ t\left| \partial_i z_1 \right|^{p_1}+(1- t)\left| \partial_i z_2 \right|^{p_1} \right]^{\frac{1}{p_1}}.
\end{array}
$$
This implies that 
\begin{equation}\label{inequality john 5}
\left| \partial_i z_3\right|\leq \left[ t\left| \partial_i z_1 \right|^{p_1}+(1-t)\left| \partial_i z_2 \right|^{p_1} \right]^{\frac{1}{p_1}}.
\end{equation}

Next, we claim that $t\mapsto A_i(t^{p_i/p_1})$ is convex. Indeed,
\[
\frac{d}{dt}A_i(t^{p_i/p_1})
=\frac{p_i}{p_1}a_i(t^{p_i/p_1})
t^{(p_i-p_1)/p_1}.
\]
Since $p_i\geq p_1$, the power $t^{(p_i-p_1)/p_1}$ is nondecreasing; the other factor is nondecreasing by $(a_2)$. For $t>0$ the derivative above is nondecreasing. It has a finite right limit at zero, equal to $a_i(0)$ if $p_i=p_1$ and to zero if $p_i>p_1$. The claimed convexity therefore holds on $[0,\infty)$ for every $p_i>1$, without assuming $a_i'(0)$ exists.

Finally, we have 
$$
J(tw_1+(1-t)w_2)= J(z_3^{p_1})= 
\sum_{i=1}^{N}\dfrac{1}{p_i}\int_{\Omega}A_i\left( \left| \partial_i z_3 \right|^{p_i}\right)dx.
$$
Using \eqref{inequality john 5} and the fact that $A_i$ is non-decreasing, we have
$$
J(tw_1+(1-t)w_2)\leq \displaystyle\sum_{i=1}^{N}\frac{1}{p_i}\int_\Omega A_i\left( \left[ t\left| \partial_i z_1 \right|^{p_1}+(1- t)\left| \partial_i z_2 \right|^{p_1} \right]^{\frac{p_i}{p_1}}\right)dx.
$$
Given the convexity of the function $t\mapsto A_i(t^{\frac{p_i}{p_1}})$, it follows that
\[
\begin{aligned}
J(tw_1+(1-t)w_2)
&\leq t\sum_{i=1}^N\frac1{p_i}\int_\Omega
A_i\bigl(|\partial_i(w_1^{1/p_1})|^{p_i}\bigr)\,dx\\
&\quad +(1-t)\sum_{i=1}^N\frac1{p_i}\int_\Omega
A_i\bigl(|\partial_i(w_2^{1/p_1})|^{p_i}\bigr)\,dx\\
&=tJ(w_1)+(1-t)J(w_2).
\end{aligned}
\]
Therefore, $J$ is a convex functional.

Now, suppose that the set
\[
\Omega_0=\{x\in\Omega:u_1(x)>u_2(x)\}
\]
has positive Lebesgue measure. Since $u_1,u_2\geq0$ and $u_1\leq Cu_2$ a.e. in $\Omega$, we have \(u_1>u_2>0~\mbox{a.e. in }\Omega_0.\) We define, for $j=1,2$,
\[
\varphi_j=
\begin{cases}
\dfrac{u_1^{p_1}-u_2^{p_1}}{u_j^{p_1-1}},&\mbox{in }\Omega_0,\\[2mm]
0,&\mbox{in }\Omega\setminus\Omega_0.
\end{cases}
\]

We will prove that $\varphi_j\in W_0^{1,\vec{\gamma}}(\Omega)$. In fact, note that $\varphi_1=\varphi_2=0$ a.e. in $\Omega\setminus\Omega_0$ and $\varphi_1=u_1-\left( \dfrac{u_2}{u_1}\right)^{p_1-1}\!\!\!\!\!\!\!\!\!\!\!\!u_2$, a.e. in $\Omega_0$. Moreover,
$$
\partial_i\varphi_1
=\partial_i u_1-(p_1-1)\left( \dfrac{u_2}{u_1}\right)^{p_1-1}\partial_i u_2+(p_1- 1)\left( \dfrac{u_2}{u_1}\right)^{p_1}\partial_i u_1-\left( \dfrac{u_2}{u_1}\right)^{p_1-1}\partial_i u_2. 
$$
Since $\dfrac{u_2}{u_1}\leq 1$ in $\Omega_0$, we conclude that
$$
\left| \partial_i\varphi_1 \right|\leq\left( 1+(p_1- 1)\left|\dfrac{u_2}{u_1} \right|_{L_{(\Omega_0)}^{\infty}}^{p_1} \right)\left| \partial_i u_1\right|+\left( p_1\left|\dfrac{u_2}{u_1} \right|_{L_{(\Omega_0)}^{\infty}}^{p_1- 1} \right)\left| \partial_i u_2\right|.
$$

The preceding gradient estimates, together with the Sobolev chain rule applied to the corresponding Lipschitz extensions, show that $\varphi_1\in W_0^{1,\vec{\gamma}}(\Omega)$; the extensions send $(0,0)$ to zero and hence preserve the zero trace. Similarly, since $u_1\leq Cu_2$ a.e. in $\Omega$, we obtain $\varphi_2\in W_0^{1,\vec{\gamma}}(\Omega)$. This proves the claim.

Finally, set \(w_j=u_j^{p_1},~j=1,2.\) Since
\[
\left(\max\{w_1,w_2\}\right)^{1/p_1}
=\max\{u_1,u_2\}
\qquad\mbox{and}\qquad
\left(\min\{w_1,w_2\}\right)^{1/p_1}
=\min\{u_1,u_2\},
\]
we have
\[
J(\max\{w_1,w_2\})+J(\min\{w_1,w_2\})
=J(w_1)+J(w_2).
\]
Indeed, on the set $\{w_1>w_2\}$, the functions $\max\{w_1,w_2\}$ and $\min\{w_1,w_2\}$ coincide with $w_1$ and $w_2$, respectively, while on $\{w_1\leq w_2\}$ the roles are interchanged.

Set \(h=(w_1-w_2)^+.\) For $t\in(0,1)$, we have
\[
w_2+th=(1-t)w_2+t\max\{w_1,w_2\}.
\]
Since $J$ is convex,
\[
J(w_2+th)\leq(1-t)J(w_2)+tJ(\max\{w_1,w_2\}),
\]
and therefore
\[
\frac{J(w_2+th)-J(w_2)}{t}
\leq J(\max\{w_1,w_2\})-J(w_2).
\]
Since $u_1\leq Cu_2$, the Sobolev chain rule gives, along both interpolating segments, uniform bounds of the corresponding spatial derivatives and their $t$-derivatives by $C_0(|\partial_i u_1|+|\partial_i u_2|)$, for each $i$ and $0<t<1/2$. Thus, by $(a_1)$ and dominated convergence, letting $t\to0^+$, we obtain
\[
\frac{1}{p_1}
\sum_{i=1}^{N}\int_{\Omega}
a_i(|\partial_i u_2|^{p_i})
|\partial_i u_2|^{p_i-2}\partial_i u_2
\partial_i\varphi_2\,dx
\leq J(\max\{w_1,w_2\})-J(w_2).
\]

Similarly,
\[
w_1-th=(1-t)w_1+t\min\{w_1,w_2\},
\]
and hence, by the convexity of $J$,
\[
J(w_1-th)\leq(1-t)J(w_1)+tJ(\min\{w_1,w_2\}).
\]
Thus,
\[
J(w_1)-J(\min\{w_1,w_2\})
\leq\frac{J(w_1)-J(w_1-th)}{t}.
\]
Letting $t\to0^+$ gives
\[
J(w_1)-J(\min\{w_1,w_2\})
\leq
\frac{1}{p_1}
\sum_{i=1}^{N}\int_{\Omega}
a_i(|\partial_i u_1|^{p_i})
|\partial_i u_1|^{p_i-2}\partial_i u_1
\partial_i\varphi_1\,dx.
\]
Consequently,
\[
\begin{aligned}
0
&\leq
\sum_{i=1}^{N}\int_{\Omega}
a_i(|\partial_i u_1|^{p_i})
|\partial_i u_1|^{p_i-2}\partial_i u_1
\partial_i\varphi_1\,dx
-
\sum_{i=1}^{N}\int_{\Omega}
a_i(|\partial_i u_2|^{p_i})
|\partial_i u_2|^{p_i-2}\partial_i u_2
\partial_i\varphi_2\,dx.
\end{aligned}
\]

Since $u_1$ and $u_2$ are respectively a subsolution and a supersolution, and $\varphi_1,\varphi_2\geq0$, we get
\[
\begin{aligned}
0
\leq
\int_{\Omega_0}f(x,u_1)\varphi_1\,dx
-
\int_{\Omega_0}f(x,u_2)\varphi_2\,dx=
\int_{\Omega_0}
\left[
\frac{f(x,u_1)}{u_1^{p_1-1}}
-
\frac{f(x,u_2)}{u_2^{p_1-1}}
\right]
(u_1^{p_1}-u_2^{p_1})\,dx.
\end{aligned}
\]
Since $u_1>u_2$ in $\Omega_0$ and
\(
t\mapsto\frac{f(x,t)}{t^{p_1-1}}
\)
is strictly decreasing, the integrand is strictly negative a.e. in $\Omega_0$. Hence, if $|\Omega_0|>0$,
\[
\int_{\Omega_0}
\left[
\frac{f(x,u_1)}{u_1^{p_1-1}}
-
\frac{f(x,u_2)}{u_2^{p_1-1}}
\right]
(u_1^{p_1}-u_2^{p_1})\,dx<0,
\]
which is a contradiction. Therefore $|\Omega_0|=0$, and consequently \(u_1\leq u_2\quad\mbox{a.e. in }\Omega.\)
\end{proof}
\section{{Power-type nonlinearities}}

In this section we study questions about existence and multiplicity of nontrivial weak solutions to the following general class of problems
\begin{equation}\label{PLR}\tag{$P_{\lambda, r}$}
\left \{ \begin{array}{ll}
  -\displaystyle\sum_{i=1}^{N}\partial_{i}\left(a_{i}\left(\left|\partial_{i} u\right|^{p_{i}}\right)\left|\partial_{i} u\right|^{p_{i}-2}\partial_{i} u\right)=\lambda|u|^{r-2}u \; \mbox{in} \; \Omega , \; \\
  u=0, \;\mbox{on}\; \partial\Omega.
\end{array}\right.
\end{equation}
where $N\geq 1$, $\Omega \subset \real^{N}$ is a bounded smooth domain and
$\lambda\in\R$. We use the harmonic anisotropic Sobolev exponent
\[
 \overline\gamma:=N\left(\sum_{i=1}^N\frac1{\gamma_i}\right)^{-1},
 \qquad
 \gamma^*:=\begin{cases}
 \dfrac{N\overline\gamma}{N-\overline\gamma},&\overline\gamma<N,\\[3pt]
 +\infty,&\overline\gamma\geq N.
 \end{cases}
\]
For $\overline\gamma<N$, the endpoint of the continuous embedding is $\gamma_\infty:=\max\{\gamma^*,\gamma_+\}$; compactness holds below this endpoint. Unless a different range is explicitly stated, $r\in(1,\gamma^*)$. The
associated energy functional is
\begin{equation}\label{power-energy}
 I_\lambda(u)=\sum_{i=1}^N\frac1{p_i}\int_\Omega
 A_i(|\partial_i u|^{p_i})\,dx-\frac{\lambda}{r}\int_\Omega|u|^r\,dx.
\end{equation}

For later use, we set
\[
\mathcal M(u):=
\sum_{i=1}^N\|\partial_i u\|_{p_i}^{p_i}
+
\sum_{i\in\mathscr I_q}\|\partial_i u\|_{q_i}^{q_i}=\sum_{i=1}^N\|\partial_i u\|_{p_i}^{p_i}
+
\sum_{i=1}^NH(\zeta_i)\|\partial_i u\|_{q_i}^{q_i},
\qquad
u\in W_0^{1,\vec\gamma}(\Omega),
\]
where $\mathscr I_q:=\{i:H(\zeta_i)=1\}$.
\subsection{{Nonexistence results}}

\color{black}
The main purpose of the present subsection is to show that if the parameters involved in the problem belong to some suitable ranges, then the unique solution of problem \eqref{PLR} is the trivial one.

\begin{theorem}\label{teo1}
Suppose that $\lambda\in(-\infty, 0]$, $r\in (1, \gamma^{\ast})$ and there exists $j\in\{1, 2, \ldots, N\}$ such that $a_j$ satisfies $(a_1)$. Then, the unique solution of problem \eqref{PLR} is the trivial one.
\end{theorem}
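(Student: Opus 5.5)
The plan is to test the weak formulation of \eqref{PLR} with the solution itself. Let $u\in W_0^{1,\vec\gamma}(\Omega)$ be a weak solution. First I check that $\varphi=u$ is an admissible test function. By Theorem~\ref{FraGaKow04} applied with the exponents $\vec\gamma$, and since $r<\gamma^*\leq\gamma_\infty$ (when $\overline\gamma\geq N$ every finite $r$ is allowed), we have $u\in L^r(\Omega)$. Hence $|u|^{r-2}u\,u=|u|^r\in L^1(\Omega)$, and more generally $|u|^{r-2}u\,v\in L^1$ for every test function $v$ by H\"older's inequality. Testing with $v=u$ gives
\[
\sum_{i=1}^N\int_\Omega a_i(|\partial_i u|^{p_i})|\partial_i u|^{p_i}\,dx=\lambda\int_\Omega|u|^r\,dx\leq0,
\]
because $\lambda\leq0$.

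Next I bound the left-hand side from below. Each $a_i$ is positive by the standing assumptions, so every summand is nonnegative. For the index $j$ in the hypothesis, $(a_1)$ gives $a_j(s)\geq\alpha_j>0$. Therefore
\[
0\leq\alpha_j\int_\Omega|\partial_j u|^{p_j}\,dx\leq\sum_{i=1}^N\int_\Omega a_i(|\partial_i u|^{p_i})|\partial_i u|^{p_i}\,dx\leq0.
\]
It follows that $\partial_j u=0$ a.e. in $\Omega$.

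Finally, I conclude with the directional Poincar\'e inequality \eqref{API}. Since $u\in W_0^{1,1}(\Omega)$ and $\partial_j u\in L^{p_j}(\Omega)$, it gives $|u|_{p_j}\leq\frac{dp_j}{2}|\partial_j u|_{p_j}=0$. Hence $u=0$ a.e. Conversely, $u=0$ is clearly a solution, since the flux vanishes at $t=0$ by convention. This proves uniqueness of the trivial solution.

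The only delicate point is the admissibility of $u$ as a test function. This is exactly where the hypothesis $r<\gamma^*$ enters, through the continuous embedding into $L^r$. Everything else is a sign argument, and a single direction with the coercive lower bound $a_j\geq\alpha_j$ suffices; positivity of the other $a_i$ handles the remaining terms.
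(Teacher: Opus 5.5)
Your proposal is correct and matches the paper's proof: you test the weak formulation with $u$, use $a_j\geq\alpha_j$ and the positivity of the other $a_i$ to force $\partial_j u=0$, and conclude with the directional Poincar\'e inequality \eqref{API}. Your check that $u$ is an admissible test function, via the embedding $W_0^{1,\vec\gamma}(\Omega)\hookrightarrow L^r(\Omega)$ for $r<\gamma^*$, is a harmless clarification that the paper leaves implicit.
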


\color{black}
\begin{proof}
Let $u\in W_{0}^{1, \vec{\gamma}}(\Omega)$ be a weak solution of \eqref{PLR}, that is,
$$
\displaystyle\sum_{i=1}^{N}\int_{\Omega}a_{i}\left(\left|\partial_{i} u\right|^{p_{i}}\right)\left|\partial_{i} u\right|^{p_{i}-2}\partial_{i} u\partial_{i} v dx=\lambda\int_{\Omega}|u|^{r-2}uv dx, \ \forall \ v\in W_{0}^{1, \vec{\gamma}}(\Omega).
$$
Since $\lambda\leq 0$, by choosing $v=u$, we get
$$
\displaystyle\sum_{i=1}^{N}\int_{\Omega}a_{i}\left(\left|\partial_{i} u\right|^{p_{i}}\right)\left|\partial_{i} u\right|^{p_{i}} dx=\lambda\int_{\Omega}|u|^{r} dx\leq 0.
$$
Since $a_i$ are nonnegative functions and $a_{j}$ satisfies $(a_1)$, there exists a positive number $\alpha_j$ such that
$$
0\leq\alpha_{j}\int_{\Omega}\left|\partial_{j} u\right|^{p_{j}} dx\leq\lambda\int_{\Omega}|u|^{r} dx\leq 0.
$$
The result now follows from the Poincar\'e-type anisotropic inequality \eqref{API}.
\end{proof}

The next result gives nonexistence for small positive values of $\lambda$. Set
\[
\widetilde p:=\max\bigl(\{p_N\}\cup\{q_i:H(\zeta_i)=1\}\bigr).
\]
Choose $k$ so that either $\widetilde p=p_N$ and $k=N$, or
$\widetilde p=q_k$ and $H(\zeta_k)=1$. Put $c_k=\alpha_N$ in the first
case and $c_k=\theta_k$ in the second.

\begin{theorem}
Suppose that $p_{1}<r<\widetilde p<\gamma^{\ast}$ and $(a_1)$ hold. Then, there exists $\lambda_{\ast}>0$ such that \eqref{PLR} admits only the trivial solution for $\lambda\in (0, \lambda_{\ast})$.
\end{theorem}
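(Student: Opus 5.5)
Let $u\neq0$ be a weak solution of \eqref{PLR} with $\lambda>0$. We aim to show that $\lambda$ must be bounded below by a positive constant depending only on $\Omega$, the exponents and the structural constants. Testing the equation with $v=u$ and using $(a_1)$ gives
\[
\sum_{i=1}^N\alpha_i\|\partial_i u\|_{p_i}^{p_i}+\sum_{i\in\mathscr I_q}\theta_i\|\partial_i u\|_{q_i}^{q_i}
\le\sum_{i=1}^N\int_\Omega a_i(|\partial_iu|^{p_i})|\partial_iu|^{p_i}\,dx=\lambda\int_\Omega|u|^r\,dx .
\]
In particular the left side controls both $\alpha_1\|\partial_1u\|_{p_1}^{p_1}$ and $c_k\|\partial_k u\|_{\widetilde p}^{\widetilde p}$, where $k$ and $c_k$ are as fixed before the statement.

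The key step is to bound $\int_\Omega|u|^r$ by an interpolation between the $p_1$-level and the $\widetilde p$-level. Since $p_1<r<\widetilde p$, we write $r=s p_1+(1-s)\widetilde p$ with $s\in(0,1)$. Hölder's inequality then gives $\int|u|^r\le(\int|u|^{p_1})^{s}(\int|u|^{\widetilde p})^{1-s}$. The directional Poincaré inequality \eqref{API} is applied in direction $1$ with exponent $p_1$, and in direction $k$ with exponent $\widetilde p$; the latter is legitimate because $\partial_k u\in L^{\widetilde p}(\Omega)$ in both cases ($\widetilde p=\gamma_k$). This yields
\[
\int_\Omega|u|^r\le C\,\|\partial_1u\|_{p_1}^{sp_1}\,\|\partial_ku\|_{\widetilde p}^{(1-s)\widetilde p}.
\]
Young's inequality with exponents $1/s$ and $1/(1-s)$ bounds the right side by $C\bigl(\|\partial_1u\|_{p_1}^{p_1}+\|\partial_ku\|_{\widetilde p}^{\widetilde p}\bigr)$. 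That sum is in turn at most $C'\bigl(\alpha_1\|\partial_1u\|_{p_1}^{p_1}+c_k\|\partial_ku\|_{\widetilde p}^{\widetilde p}\bigr)$, which is dominated by the left-hand side of the tested identity. Note that the Sobolev embedding (and hence the hypothesis $\widetilde p<\gamma^*$) enters only to guarantee that $\int|u|^r$ and the weak formulation make sense.

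Combining the two displays, we obtain $\int_\Omega|u|^r\le \lambda C''\int_\Omega|u|^r$. Since $u\neq0$, Poincaré gives $\int|u|^r>0$, so $\lambda\ge 1/C''$. Hence the statement holds with $\lambda_*:=1/C''$.

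The only delicate point is the case analysis for $k$. In the case $k=N$ with $\widetilde p=p_N$, the controlling term is $\alpha_N\|\partial_N u\|_{p_N}^{p_N}$. When $\widetilde p=q_k$, it is $\theta_k\|\partial_ku\|_{q_k}^{q_k}$, which requires $H(\zeta_k)=1$; this holds by the choice of $k$. One must also check that the constants depend neither on $u$ nor on $\lambda$, which follows from the above since only \eqref{API}, Hölder and Young were used.
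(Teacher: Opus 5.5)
Your proof is correct and follows essentially the paper's argument: test the equation with $u$, use the lower bound from $(a_1)$, interpolate $\int_\Omega|u|^r$ between $L^{p_1}$ and $L^{\widetilde p}$, and close with Young's inequality and the directional Poincar\'e inequality \eqref{API} in directions $1$ and $k$. The only differences are cosmetic. You apply Poincar\'e before Young and conclude from $\int_\Omega|u|^r\le\lambda C''\int_\Omega|u|^r$, whereas the paper applies Young to the $L^{p_1}$ and $L^{\widetilde p}$ norms of $u$ and then compares the coefficients of $\|u\|_{p_1}^{p_1}$ and $\|u\|_{\widetilde p}^{\widetilde p}$.
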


\color{black}
\begin{proof}
Let $u$ be a weak solution for the problem \eqref{PLR}. Thus, as in the proof of Theorem \ref{teo1}, we have
\begin{equation}\label{iden1}
\displaystyle\sum_{i=1}^{N}\int_{\Omega}a_{i}\left(\left|\partial_{i} u\right|^{p_{i}}\right)\left|\partial_{i} u\right|^{p_{i}} dx=\lambda\int_{\Omega}|u|^{r} dx.
\end{equation}
Since $p_{1}<r<\widetilde p$, interpolation gives
$$
|u|_{r}^{r}\leq |u|_{p_{1}}^{\theta r}|u|_{\widetilde p}^{(1-\theta)r},
$$
where $\theta\in(0,1)$ satisfies
$1/r=\theta/p_1+(1-\theta)/\widetilde p$. Weighted Young's inequality yields
\begin{equation}\label{ineq1}
|u|_{r}^{r}\leq
\frac{\theta r}{p_1}|u|_{p_1}^{p_1}
+
\frac{(1-\theta)r}{\widetilde p}
|u|_{\widetilde p}^{\widetilde p}.
\end{equation}
Combining \eqref{iden1}, \eqref{ineq1} and $(a_1)$ gives
\[
\alpha_1\|\partial_1u\|_{p_1}^{p_1}
+c_k\|\partial_k u\|_{\widetilde p}^{\widetilde p}
\leq
\lambda\left(
\frac{\theta r}{p_1}\|u\|_{p_1}^{p_1}
+
\frac{(1-\theta)r}{\widetilde p}
\|u\|_{\widetilde p}^{\widetilde p}
\right).
\]
By \eqref{API}, there are constants $C_1,C_2>0$, depending only on the
data, such that
\[
C_1\|u\|_{p_1}^{p_1}
+C_2\|u\|_{\widetilde p}^{\widetilde p}
\leq
\lambda\left(
\frac{\theta r}{p_1}\|u\|_{p_1}^{p_1}
+
\frac{(1-\theta)r}{\widetilde p}
\|u\|_{\widetilde p}^{\widetilde p}
\right).
\]
Choose
\[
0<\lambda<\lambda_*:=
\min\left\{
\frac{C_1p_1}{\theta r},
\frac{C_2\widetilde p}{(1-\theta)r}
\right\}.
\]
After moving the right-hand side to the left, both coefficients are
positive; hence $u=0$.
\end{proof}

\color{black}

\color{black}
\subsection{Existence results}

\color{black}
In this subsection we consider several situations involving the powers $p_i, r$ and the parameter $\lambda$. In order to prove our main theorems, the next result plays an important role.

\begin{lemma}\label{I f.s.c.i for r less than gamma}
Suppose that $r\in (1, \gamma^{\ast})$ and $(a_1)$--$(a_2)$ hold. Then the energy
functional $I_\lambda$ is weakly lower semicontinuous.
\end{lemma}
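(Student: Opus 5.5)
The plan is to split $I_\lambda=\Psi-\frac{\lambda}{r}G$, where $\Psi(u)=\sum_{i=1}^N\frac1{p_i}\int_\Omega A_i(|\partial_iu|^{p_i})\,dx$ is the principal part introduced in the proof of Proposition~\ref{Splus} and $G(u)=\int_\Omega|u|^r\,dx$. We then show separately that $\Psi$ is weakly lower semicontinuous and that $G$ is weakly continuous on $W_0^{1,\vec\gamma}(\Omega)$. Since $\lambda\in\R$ is arbitrary, weak continuity of $G$, and not merely semicontinuity, is what we need.

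For $\Psi$, we use what the proof of Proposition~\ref{Splus} already established under $(a_1)$--$(a_2)$. The functional $\Psi$ is convex, because $h_i(t)=\frac1{p_i}A_i(|t|^{p_i})$ is convex on $\R$. It is also $C^1$, hence strongly continuous, on $W_0^{1,\vec\gamma}(\Omega)$, with finiteness guaranteed by \eqref{desig of item a1}. A convex, strongly continuous functional has closed convex sublevel sets, and by Mazur's theorem these are weakly closed. Hence $\Psi$ is weakly lower semicontinuous. Alternatively, one can argue directly: convexity gives $\Psi(u_n)\geq\Psi(u)+\langle\mathcal A(u),u_n-u\rangle$, and the last term tends to zero when $u_n\rightharpoonup u$, so $\liminf\Psi(u_n)\geq\Psi(u)$. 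I will use this one-line argument.

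For $G$, let $u_n\rightharpoonup u$ in $W_0^{1,\vec\gamma}(\Omega)$. Since $r<\gamma^*\leq\gamma_\infty$ when $\overline\gamma<N$, and $r<\infty$ when $\overline\gamma\geq N$, Theorem~\ref{FraGaKow04} applied with $\vec p=\vec\gamma$ gives a compact embedding into $L^r(\Omega)$. The compact embedding maps weakly convergent sequences to strongly convergent ones, so $u_n\to u$ in $L^r(\Omega)$ and therefore $|u_n|_r^r\to|u|_r^r$. Combining the two parts yields $\liminf I_\lambda(u_n)\geq I_\lambda(u)$.

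The only delicate point is the precise exponent range for compactness. We must check that $r<\gamma^*$ lies strictly below the compactness endpoint $\gamma_\infty=\max\{\gamma^*,\gamma_+\}$, and we must handle the convention $\gamma^*=+\infty$ when $\overline\gamma\geq N$; Theorem~\ref{FraGaKow04} covers both cases. Everything else is standard convexity.
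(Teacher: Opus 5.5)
Your proposal is correct and follows essentially the same route as the paper: you split off the reaction term, which is weakly continuous by the compact embedding of Theorem~\ref{FraGaKow04}, and you obtain weak lower semicontinuity of the convex principal part. The only difference is that you reuse the $C^1$ property from Proposition~\ref{Splus} together with the gradient inequality $\Psi(u_n)\geq\Psi(u)+\langle\mathcal A(u),u_n-u\rangle$, whereas the paper re-proves strong continuity of $\psi$ directly by dominated convergence and then concludes from convexity plus continuity.
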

\begin{proof}
Let us denote $I_\lambda(u)=\psi(u)-\lambda\xi(u)$, where 
$$
\psi(u)=\sum_{i=1}^{N}\frac{1}{p_{i}}\int_{\Omega}A_i\left(\left|\partial_{i} u\right|^{p_{i}}\right)dx\qquad\mbox{and}\qquad \xi(u)=\frac{1}{r}\int_{\Omega}|u|^{r}dx.
$$
Since $r\in (1, \gamma^{\ast})$, it follows from compact embeddings that $\xi$ is weakly continuous. It remains to prove that $\psi$ is weakly lower semicontinuous. For this purpose, observe that defining $h_{i}:(0,\infty)\to\R$, $\ i\in\{1, 2, \ldots, N\}$, by
$$
h_{i}(t)=\frac{1}{p_i}A_{i}(t^{p_{i}}), 
$$
we have $h'_{i}(t)=a_{i}(t^{p_i})t^{p_{i}-1}$ and 
\[
h''_{i}(t)=p_i a_i'(t^{p_i})t^{2p_i-2}
+(p_i-1)a_i(t^{p_i})t^{p_i-2}>0,
\qquad t>0,
\]
where $(a_2)$ and the positivity of $a_i$ were used. By continuity at zero, the derivative extends there with value zero. Thus $t\mapsto A_i(|t|^{p_i})/p_i$ is convex on $\R$, and $\psi$ is convex. Now, we shall prove that $\psi$ is continuous. In fact, if
$u_n\to u$ in $W_0^{1,\vec{\gamma}}(\Omega)$, then, up to a
subsequence,
\[
\left|\partial_i u_n\right|
\to
\left|\partial_i u\right|
\qquad\mbox{and}\qquad
\left|\partial_i u_n\right|
\leq g_i
\quad\mbox{a.e. in }\Omega,
\]
for some $g_i\in L^{\gamma_i}(\Omega)$ and each
$i\in\{1,2,\ldots,N\}$.

Since $h_i$ is continuous and increasing, we conclude that
\begin{equation}\label{vain1}
\sum_{i=1}^{N}
h_i\left(
\left|\partial_i u_n\right|
\right)
\to
\sum_{i=1}^{N}
h_i\left(
\left|\partial_i u\right|
\right)
\quad\mbox{and}\qquad
\sum_{i=1}^{N}
h_i\left(
\left|\partial_i u_n\right|
\right)
\leq
\sum_{i=1}^{N}h_i(g_i)
\quad\mbox{a.e. in }\Omega.
\end{equation}

It follows from \eqref{desig of item a1} that
\[
0\leq h_i(g_i)
\leq
\frac{\beta_i}{p_i}g_i^{p_i}
+
H(\zeta_i)\frac{\theta_i}{q_i}g_i^{q_i}.
\]
Since $\gamma_i=p_i$ if $H(\zeta_i)=0$ and
$\gamma_i=q_i$ if $H(\zeta_i)=1$, we conclude that
$h_i(g_i)\in L^1(\Omega)$. Therefore, from \eqref{vain1} and
the Lebesgue Dominated Convergence Theorem, we obtain
\[
\psi(u_n)\to\psi(u).
\]
Thus, $\psi$ is continuous and, by convexity, weakly lower
semicontinuous.
\end{proof}

\begin{theorem}\label{Theorem p1 smallest r smallest critical range}
Suppose that $(a_1)-(a_2)$ hold for all
$i\in\{1, 2, \ldots, N\}$ and
$1<p_{1}<r<\widetilde p<\gamma^{\ast}$. Then there exists
$\lambda^{*}>0$ such that problem \eqref{PLR} possesses a nontrivial
solution for every $\lambda>\lambda^{*}$.
\end{theorem}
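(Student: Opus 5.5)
The plan is to obtain the solution as a global minimizer of $I_\lambda$ on $W_0^{1,\vec\gamma}(\Omega)$, and then to show that for large $\lambda$ the infimum is negative, so the minimizer is nontrivial.

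\emph{Coercivity.} By \eqref{energy-growth},
\[
I_\lambda(u)\geq c_0\mathcal M(u)-\frac{\lambda}{r}|u|_r^r .
\]
Since $r<\widetilde p$, we dominate $|u|_r^r$ by the $k$-th directional term. If $\widetilde p=p_N$, take $k=N$; otherwise $\widetilde p=q_k$ with $H(\zeta_k)=1$. Because $\Omega$ is bounded, H\"older's inequality gives $|u|_r\le C|u|_{\widetilde p}$. The directional Poincar\'e inequality \eqref{API}, used in direction $k$ with exponent $\widetilde p$, then gives
\[
|u|_r^r\leq C\,\|\partial_k u\|_{\widetilde p}^{\,r}.
\]
Hence
\[
I_\lambda(u)\ge c_0\sum_i\|\partial_iu\|_{\gamma_i}^{\gamma_i}-C\lambda\|\partial_ku\|_{\widetilde p}^{r}.
\]
The right-hand side tends to $+\infty$ as $\|u\|\to\infty$ because $r<\widetilde p=\gamma_k$. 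Indeed, in each direction $t^{\gamma_i}$ eventually dominates any sublinear loss, and the term $c_0t^{\widetilde p}-C\lambda t^r$ is bounded below. So $I_\lambda$ is coercive and bounded below for every fixed $\lambda>0$.

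\emph{Existence of a minimizer.} Lemma~\ref{I f.s.c.i for r less than gamma} gives weak lower semicontinuity. The space $W_0^{1,\vec\gamma}(\Omega)$ is reflexive by Theorem~\ref{FraGaKow04}. A minimizing sequence is bounded by coercivity, so a weakly convergent subsequence has as its limit a global minimizer $u_\lambda$. Since $I_\lambda\in C^1$ (the principal part by the proof of Proposition~\ref{Splus}, and the term $|u|^r$ by the embedding with $r<\gamma^*$), $u_\lambda$ is a weak solution of \eqref{PLR}. Replacing $u$ by $|u|$ does not change $I_\lambda$, so we may additionally take $u_\lambda\geq0$.

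\emph{Nontriviality.} Fix any $\phi\in C_c^\infty(\Omega)\setminus\{0\}$. Then
\[
I_\lambda(\phi)=\Psi(\phi)-\frac{\lambda}{r}|\phi|_r^r,
\]
where $\Psi(\phi)<\infty$ by \eqref{energy-growth} and $|\phi|_r>0$. Setting $\lambda^*:=r\Psi(\phi)/|\phi|_r^r$, every $\lambda>\lambda^*$ gives $I_\lambda(u_\lambda)\le I_\lambda(\phi)<0=I_\lambda(0)$, so $u_\lambda\neq0$.

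\emph{Main obstacle.} The delicate step is coercivity, because the exponents differ by direction and $\|u\|\to\infty$ may happen in directions other than $k$. The key observation is that the whole reaction term is controlled by the single direction $k$, whose exponent $\widetilde p$ exceeds $r$. All other directions enter only with positive coercive contributions, so no interplay with $\gamma^*$ beyond compactness is needed. (Note that $r<\widetilde p$ with $\widetilde p=\gamma_k$ requires $\gamma_k\ge\widetilde p$, which holds by the definition of $\widetilde p$.)
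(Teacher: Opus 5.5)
Your proposal is correct and follows essentially the same route as the paper: you obtain coercivity by controlling $|u|_r^r$ through the single direction $k$ with $\gamma_k=\widetilde p>r$ (H\"older, directional Poincar\'e, Young), get a global minimizer by the direct method and the weak lower semicontinuity lemma, and show that a fixed test function has negative energy for large $\lambda$. The only difference is cosmetic: you make the threshold explicit, $\lambda^*=r\Psi(\phi)/|\phi|_r^r$, which is positive because $\Psi(\phi)>0$ for $\phi\neq0$.
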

\begin{proof}
Write $I=I_\lambda$ as in \eqref{power-energy}. By \eqref{API} and H\"older's inequality,
$\|u\|_r^r\leq C\|\partial_k u\|_{\widetilde p}^r$.
Since $r<\widetilde p$, Young's inequality absorbs this reaction term
into any fixed positive fraction of its $\widetilde p$-energy term.
Using \eqref{energy-growth}, we obtain
\[
I_\lambda(u)\geq c\mathcal M(u)-C_\lambda,
\qquad c>0.
\]
There are finitely many derivative norms in \eqref{norm 2 sobolev space},
and each power is greater than one. Hence $\mathcal M(u)\to\infty$
whenever $\|u\|\to\infty$. Thus
$I_\lambda$ is coercive. Lemma \ref{I f.s.c.i for r less than gamma} and
the direct method give a global minimizer $u_\lambda$.

Fix $0\ne v\in C_c^\infty(\Omega)$. From $(a_1)$,
\[
 I_\lambda(v)\leq \sum_{i=1}^N\frac{\beta_i}{p_i}\|\partial_i v\|_{p_i}^{p_i}
+
\sum_{i=1}^N
H(\zeta_i)\frac{\theta_i}{q_i}
\|\partial_i v\|_{q_i}^{q_i}-\frac{\lambda}{r}\|v\|_r^r.
\]

The right-hand side is negative for all sufficiently large $\lambda$.
Consequently $I_\lambda(u_\lambda)\leq I_\lambda(v)<0=I_\lambda(0)$, so
$u_\lambda\ne0$. Replacing $u_\lambda$ by $|u_\lambda|$ if necessary and
using the Euler equation concludes the proof.
\end{proof}

\begin{theorem}\label{Teorema p1 menor igual pN menor igual gama critico}
Suppose that $(a_1)-(a_2)$ hold, for all $i\in\{1,2,...,N\}$ and $1<r<p_1$. Then the problem $(P_{\lambda,r})$ has at least one non-trivial and non-negative solution for each $\lambda\in (0,+\infty)$. 

Moreover, if $u_1$ and $u_2$ are two nonnegative solutions such that, for some constants $C_1,C_2>0$,
$u_1\leq C_1u_2,~u_2\leq C_2u_1~\mbox{a.e. in }\Omega,$ then $u_1=u_2$.
\end{theorem}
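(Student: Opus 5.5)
For the existence part, I would use the direct method on the energy $I_\lambda$ from \eqref{power-energy}, in the same way as in the proof of Theorem~\ref{Theorem p1 smallest r smallest critical range}.

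\emph{Coercivity.} Since $r<p_1\le\gamma_i$, combine \eqref{API} with H\"older's inequality to get $\|u\|_r^r\le C\|\partial_1u\|_{p_1}^r$. The lower bound in \eqref{energy-growth} gives $I_\lambda(u)\ge c\,\mathcal M(u)-C\lambda\|\partial_1 u\|_{p_1}^r$. Because $r<p_1$, Young's inequality absorbs the reaction term, so $I_\lambda(u)\ge \tfrac c2\mathcal M(u)-C_\lambda$. Since $\mathcal M(u)\to\infty$ as $\|u\|\to\infty$, $I_\lambda$ is coercive.

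\emph{Minimizer.} Lemma~\ref{I f.s.c.i for r less than gamma} gives weak lower semicontinuity, so a global minimizer $u_\lambda$ exists. As $I_\lambda(|u|)=I_\lambda(u)$ (because $|\partial_i|u||=|\partial_i u|$ a.e.), we may take $u_\lambda\ge0$.

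\emph{Nontriviality.} Fix $0\le v\in C_c^\infty(\Omega)$, $v\ne0$, and $t\in(0,1)$. The upper bound in \eqref{energy-growth} gives
\[
I_\lambda(tv)\le C_v\bigl(t^{p_1}+t^{\gamma_+}\bigr)-\frac{\lambda t^r}{r}\|v\|_r^r ,
\]
using that every directional power is at least $p_1$. Since $r<p_1$, this is negative for small $t$, for every $\lambda>0$. Hence $I_\lambda(u_\lambda)<0$ and $u_\lambda\ne0$.

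\emph{Weak solution.} The functional $I_\lambda$ is $C^1$: $\Psi'=\mathcal A$ by Proposition~\ref{Splus}, and the reaction term is $C^1$ because $r>1$ and $r<\gamma^*$. So $I_\lambda'(u_\lambda)=0$, and $u_\lambda$ is a nonnegative nontrivial weak solution.

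For uniqueness, I would apply Theorem~\ref{Comparison principle} with $f(x,t)=\lambda t^{r-1}$. Then $f(x,t)/t^{p_1-1}=\lambda t^{r-p_1}$, which is strictly decreasing on $(0,\infty)$ because $r<p_1$ and $\lambda>0$. Each nonnegative weak solution is both a subsolution and a supersolution; the integrability condition $f(x,u)\varphi\in L^1$ holds since $u\in L^r$ and the test functions lie in $L^r$ by the embeddings. Taking $u_1$ as subsolution and $u_2$ as supersolution, the hypothesis $u_1\le C_1u_2$ gives $u_1\le u_2$. Exchanging the roles and using $u_2\le C_2u_1$ gives $u_2\le u_1$, so $u_1=u_2$.

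The main point to check is the degenerate set where the solutions vanish. This is already handled inside the proof of Theorem~\ref{Comparison principle}: on $\Omega_0=\{u_1>u_2\}$ we have $u_2>0$ because $u_1\le C_1u_2$, so the quotient $f/t^{p_1-1}$ is evaluated only at positive values. I therefore expect no additional obstacle; the only routine verification left is the $C^1$ regularity of $I_\lambda$, which follows from Proposition~\ref{Splus} and standard Nemytskii arguments.
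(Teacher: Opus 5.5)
Your proposal is correct and follows essentially the same route as the paper. The steps are: coercivity from the directional Poincar\'e inequality \eqref{API} together with H\"older's inequality (you absorb the reaction term by Young's inequality using $\partial_1u$ alone, while the paper allows any direction); a global minimizer via Lemma~\ref{I f.s.c.i for r less than gamma}; negativity of $I_\lambda(tv)$ for small $t$ because $r<p_1$; replacement of the minimizer by $|u_\lambda|$; and uniqueness by applying Theorem~\ref{Comparison principle} twice, since $\lambda t^{r-p_1}$ is strictly decreasing. Your extra checks are correct details the paper leaves implicit: the $L^1$ integrability of $u^{r-1}\varphi$, and the positivity of $u_2$ on $\Omega_0$ forced by $u_1\le C_1u_2$.
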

\begin{proof}
Since $\gamma^*>p_1$ when finite, and otherwise $\gamma^*=+\infty$, the assumption $r<p_1$ ensures the required subcritical embedding. By $(a_1)$ and \eqref{API}, for every $i$ there is $C_i>0$ such
that $\|u\|_r^r\leq C_i\|\partial_i u\|_{p_i}^r$. Since
$r<p_1\leq p_i<q_i$, estimate \eqref{desig of item a1} shows that
$I_\lambda$ is coercive. Lemma \ref{I f.s.c.i for r less than gamma} and
the direct method therefore give a global minimizer $u_*$. For any
$0\ne v\in C_c^\infty(\Omega)$,
\[
I_\lambda(tv)\leq
\sum_i\frac{\beta_i}{p_i}t^{p_i}
\|\partial_i v\|_{p_i}^{p_i}
+
\sum_i H(\zeta_i)\frac{\theta_i}{q_i}t^{q_i}
\|\partial_i v\|_{q_i}^{q_i}
-
\frac{\lambda}{r}t^r\|v\|_r^r.
\]
Because $r<p_1\leq p_i<q_i$, the right-hand side is negative for all
sufficiently small $t>0$. Hence $I_\lambda(u_*)<0$ and $u_*\ne0$.
Replacing $u_*$ by $|u_*|$ gives a nonnegative minimizer and the Euler
equation gives a weak solution.

Finally, since $r<p_1$, the map $t\mapsto\lambda t^{r-p_1}$ is strictly decreasing.
Hence, applying Theorem \ref{Comparison principle} first to $u_1,u_2$ and then to $u_2,u_1$, we obtain
$u_1\leq u_2$ and $u_2\leq u_1$ a.e. in $\Omega$. Therefore, $u_1=u_2$.

\end{proof}

\begin{remark}
It is worth mentioning that the positivity of the non-trivial
nonnegative solution obtained above does not follow directly from a
classical strong maximum principle. In fact, in the anisotropic case,
the strong maximum principle may have a different behavior from the
isotropic one; see \cite{VetoisSMP}. This is one reason why the
comparison principle was formulated for nonnegative sub- and
supersolutions. In particular, the uniqueness assertion above does
not require the solutions to be positive in $\Omega$.
\end{remark}

\begin{remark}
If $u_1$ and $u_2$ are two distinct nonnegative solutions, then
at least one of the inequalities \( u_1\leq Cu_2~\mbox{or}~ u_2\leq Cu_1\)
fails a.e. in $\Omega$ for every $C>0$. In particular, if both
solutions are positive a.e. in $\Omega$, then
\[
\operatorname*{ess\,sup}_{\Omega}\frac{u_1}{u_2}=+\infty
\qquad\mbox{or}\qquad
\operatorname*{ess\,sup}_{\Omega}\frac{u_2}{u_1}=+\infty.
\]
\end{remark}

We next establish the mountain-pass geometry throughout the intermediate
range. The estimate retains every directional factor in the Sobolev
inequality and measures the principal energy through its full modular.
This avoids additional restrictions on the smallest effective exponent.

\begin{lemma}\label{lema r less critical and p tilde consider the cases}
Assume $(a_1)$--$(a_2)$ for every $i$, and let $p_1<r<\gamma^*$.
For every fixed $\lambda>0$, zero is a strict local minimum of
$I_\lambda$. More precisely, there exist $c,C,\delta>0$, independent
of $u$, such that
\begin{equation}\label{full-modular-local-bound}
 I_\lambda(u)\geq c\mathcal M(u)
       -\lambda C\mathcal M(u)^{1+\delta},
 \qquad u\in W_0^{1,\vec\gamma}(\Omega).
\end{equation}
Consequently, there exist $\rho,b>0$ such that
$I_\lambda(u)\geq b$ whenever $\|u\|=\rho$, with the norm
\eqref{norm 2 sobolev space}. No condition of the form
$\gamma_-<r$ or $\gamma_-<\gamma^*/N$ is required.
\end{lemma}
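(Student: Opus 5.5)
The plan is to bound the two parts of $I_\lambda$ separately. The gradient part is bounded below by a multiple of $\mathcal M(u)$. The reaction term is bounded above by $C\mathcal M(u)^{1+\delta}$ through one interpolation: a Poincaré estimate in the single direction $e_1$, combined with Troisi's product inequality applied with the full vector $\vec\gamma$. For the first part, \eqref{energy-growth} gives at once
\[
\sum_{i=1}^N\frac1{p_i}\int_\Omega A_i(|\partial_iu|^{p_i})\,dx\;\geq\; c\,\mathcal M(u),
\qquad c:=\min_i\min\Bigl\{\frac{\alpha_i}{p_i},\frac{\theta_i}{q_i}\Bigr\},
\]
where the minimum over $\theta_i/q_i$ runs only over $i\in\mathscr I_q$. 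It then suffices to prove $\|u\|_r^r\leq C\mathcal M(u)^{1+\delta}$ for all $u$, with $C$ absorbing the factor $1/r$.

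Assume first $\overline\gamma<N$, so that $\gamma^*<\infty$ and $p_1<r<\gamma^*$. Choose $\theta\in(0,1)$ with $1/r=\theta/p_1+(1-\theta)/\gamma^*$. The interpolation inequality gives
\[
\|u\|_r\leq\|u\|_{p_1}^{\theta}\,\|u\|_{\gamma^*}^{1-\theta}.
\]
By \eqref{API} in direction $e_1$, $\|u\|_{p_1}\leq C\|\partial_1u\|_{p_1}\leq C\mathcal M(u)^{1/p_1}$. By Theorem~\ref{anisotropic inequality}(i) applied to $\vec\gamma$, keeping every directional factor,
\[
\|u\|_{\gamma^*}\leq C_T\prod_{i=1}^N\|\partial_iu\|_{\gamma_i}^{1/N}\leq C_T\prod_{i=1}^N\mathcal M(u)^{1/(N\gamma_i)}=C_T\,\mathcal M(u)^{1/\overline\gamma}.
\]
This uses $\|\partial_iu\|_{\gamma_i}^{\gamma_i}\leq\mathcal M(u)$, which holds because $\gamma_i$ is exactly the exponent appearing in $\mathcal M$: it is $p_i$ in the inactive phase and $q_i$ in the active one. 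Combining the two bounds,
\[
\|u\|_r^r\leq C\,\mathcal M(u)^{e},\qquad e=\frac{\theta r}{p_1}+\frac{(1-\theta)r}{\overline\gamma}.
\]
Writing $1/\overline\gamma=1/\gamma^*+1/N$ and using the definition of $\theta$, this exponent becomes $e=1+(1-\theta)r/N$. So $\delta:=(1-\theta)r/N>0$, and the inequality holds for every $u$ with no smallness assumption. This is the key step, and the place where an earlier cruder bound would fail. If one replaced the product by a sum, or used a single $\|\partial_iu\|_{\gamma_-}$, only an exponent like $r/\gamma_-$ would come out, forcing $\gamma_-<r$. Keeping the geometric mean produces $1/\overline\gamma$, and the identity $1/\overline\gamma-1/\gamma^*=1/N$ supplies the gain $\delta$ unconditionally.

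If $\overline\gamma\geq N$, I would repeat the argument with a finite $s>r$ in place of $\gamma^*$, using the last part of Theorem~\ref{anisotropic inequality}. The same computation gives $e=\theta r/p_1+(1-\theta)r/\overline\gamma$ with $1/r=\theta/p_1+(1-\theta)/s$. Then $e-1=(1-\theta)r(1/\overline\gamma-1/s)$, which is positive as soon as $s>\overline\gamma$, so I would choose $s>\max\{r,\overline\gamma\}$. This proves \eqref{full-modular-local-bound}.

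For the mountain-pass conclusion, fix $\lambda>0$. The function $t\mapsto ct-\lambda Ct^{1+\delta}$ is at least $\tfrac c2t$ for $0\le t\le t_0:=(c/(2\lambda C))^{1/\delta}$. On the sphere $\|u\|=\rho$ with $\rho\leq1$, each norm appearing in \eqref{norm 2 sobolev space} is at most $\rho\le 1$. Hence $\mathcal M(u)\leq 2N\rho^{p_1}$ from above. From below, $\mathcal M(u)\geq c_N\rho^{\gamma_+}$, since the largest of the at most $2N$ norms is at least $\rho/(2N)$ and its power is at most $\gamma_+$. Choosing $\rho$ small enough that $2N\rho^{p_1}\leq t_0$ gives $I_\lambda(u)\geq\tfrac c2c_N\rho^{\gamma_+}=:b>0$ on that sphere. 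Applied to all small $u$ this also yields $I_\lambda(u)>0=I_\lambda(0)$ for $0<\|u\|\le\rho$, so zero is a strict local minimum. The only delicate point in the whole argument is the exponent bookkeeping in the interpolation step above; the rest is routine.
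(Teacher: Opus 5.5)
Your proposal is correct and follows essentially the same route as the paper: a modular lower bound for the principal part, directional Poincar\'e in $e_1$, and Troisi's product inequality for $\vec\gamma$ giving $\mathcal M(u)^{1/\overline\gamma}$. These are combined by interpolation, with the gain $\delta=(1-\theta)r/N$ coming from $1/\overline\gamma-1/\gamma^*=1/N$ (a finite $s>\max\{r,\overline\gamma\}$ when $\overline\gamma\geq N$), and then by the two-sided comparison between $\mathcal M(u)$ and powers of $\rho$ on small spheres. The differences are only cosmetic: your $\theta$ is the paper's $1-\eta$, and you use $2N$ where the paper uses $L=N+|\mathscr I_q|$.
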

\begin{proof}
The lower bound in \eqref{desig of item a1} gives
\begin{equation}\label{local-energy-modular}
 I_\lambda(u)\geq c_0\mathcal M(u)-\frac{\lambda}{r}\|u\|_r^r,
 \qquad c_0>0.
\end{equation}
For every $i$, the term
$\|\partial_i u\|_{\gamma_i}^{\gamma_i}$ occurs in $\mathcal M(u)$.
Thus
\begin{equation}\label{local-directional-modular}
 \|\partial_i u\|_{\gamma_i}\leq\mathcal M(u)^{1/\gamma_i}.
\end{equation}
The lower $p_1$-phase is also retained, even if the $q_1$-phase is
active. By \eqref{API},
\begin{equation}\label{local-baseline-poincare}
 \|u\|_{p_1}\leq C\|\partial_1u\|_{p_1}
                  \leq C\mathcal M(u)^{1/p_1}.
\end{equation}

Suppose first that $\overline\gamma<N$. Theorem
\ref{anisotropic inequality} and \eqref{local-directional-modular} imply
\begin{equation}\label{local-troisi-modular}
 \|u\|_{\gamma^*}
 \leq C_T\prod_{i=1}^N\|\partial_i u\|_{\gamma_i}^{1/N}
 \leq C_T\mathcal M(u)^{1/\overline\gamma}.
\end{equation}
Since $p_1<r<\gamma^*$, there is a unique $\eta\in(0,1)$ with
\[
 \frac1r=\frac\eta{\gamma^*}+\frac{1-\eta}{p_1}.
\]
H\"older interpolation, \eqref{local-baseline-poincare}, and
\eqref{local-troisi-modular} give
\[
 \|u\|_r^r\leq
 \|u\|_{\gamma^*}^{\eta r}\|u\|_{p_1}^{(1-\eta)r}
 \leq C\mathcal M(u)^{\eta r/\overline\gamma+(1-\eta)r/p_1}.
\]
The exponent is strictly greater than one because
\[
 \frac{\eta r}{\overline\gamma}+
 \frac{(1-\eta)r}{p_1}
 =1+\eta r\left(\frac1{\overline\gamma}-\frac1{\gamma^*}\right)
 =1+\frac{\eta r}{N}.
\]
Take $\delta=\eta r/N>0$.

If $\overline\gamma\geq N$, fix a finite exponent
$t>\max\{r,\overline\gamma\}$. The finite-exponent product inequality
in Theorem \ref{anisotropic inequality} yields
\[
 \|u\|_t\leq C_t\prod_{i=1}^N\|\partial_i u\|_{\gamma_i}^{1/N}
          \leq C_t\mathcal M(u)^{1/\overline\gamma}.
\]
Choose $\eta\in(0,1)$ with
$1/r=\eta/t+(1-\eta)/p_1$. Interpolating with
\eqref{local-baseline-poincare}, we obtain
\[
 \|u\|_r^r\leq C\mathcal M(u)^{1+\delta},
 \qquad
 \delta=\eta r\left(\frac1{\overline\gamma}-\frac1t\right)>0.
\]
All constants are fixed after $t$ has been chosen. In both cases,
substitution in \eqref{local-energy-modular} proves
\eqref{full-modular-local-bound}.

To obtain the uniform barrier, put $L=N+|\mathscr I_q|$ and
$\kappa=\widetilde p=\gamma_+$. If $0<\|u\|\leq1$, every derivative
norm in \eqref{norm 2 sobolev space} is at most $\|u\|$, and at least
one is at least $\|u\|/L$. Since all its associated exponents lie
between $p_1$ and $\kappa$, we have
\begin{equation}\label{local-modular-norm-bounds}
 (\|u\|/L)^\kappa\leq\mathcal M(u)
                       \leq L\|u\|^{p_1}.
\end{equation}
Choose $0<\rho\leq1$ so small that
$\lambda C(L\rho^{p_1})^\delta\leq c_0/2$.
Then, for $0<\|u\|\leq\rho$,
\[
 I_\lambda(u)\geq\frac{c_0}{2}\mathcal M(u)>0.
\]
On $\|u\|=\rho$, \eqref{local-modular-norm-bounds} gives the
positive barrier $b=(c_0/2)(\rho/L)^\kappa$.
\end{proof}

The next theorem combines this unrestricted local estimate with the
negative global minimum and the mountain-pass theorem; see \cite{AM}.

\begin{theorem}\label{palais smale r menor p tilde}
Assume $(a_1)$--$(a_2)$ for every $i$ and
$1<p_1<r<\widetilde p<\gamma^*$. There is $\lambda_0>0$ such that,
for every $\lambda>\lambda_0$, problem \eqref{PLR} has two distinct
nontrivial weak solutions $u_\lambda,v_\lambda$ satisfying
\[
 I_\lambda(u_\lambda)<0<I_\lambda(v_\lambda).
\]
The solution $u_\lambda$ can be chosen nonnegative and is a global
minimizer. There are no further restrictions on $\gamma_-$ or
$\overline\gamma$.
\end{theorem}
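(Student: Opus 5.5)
The plan is to combine the global minimizer from Theorem~\ref{Theorem p1 smallest r smallest critical range} with a mountain-pass critical point built on the barrier from Lemma~\ref{lema r less critical and p tilde consider the cases}.

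\emph{Step 1: the global minimizer.} Take $\lambda_0:=\lambda^*$ from Theorem~\ref{Theorem p1 smallest r smallest critical range}. For $\lambda>\lambda_0$ that proof gives a fixed $0\neq v\in C_c^\infty(\Omega)$ with $I_\lambda(v)<0$. It also gives a global minimizer $u_\lambda$ with $I_\lambda(u_\lambda)\leq I_\lambda(v)<0$. Since $I_\lambda(|u|)=I_\lambda(u)$ (the energy depends only on $|\partial_i u|$ and $|u|$), we may take $u_\lambda\geq0$. As $I_\lambda\in C^1$ by the proof of Proposition~\ref{Splus}, $u_\lambda$ is a weak solution.

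\emph{Step 2: mountain-pass geometry.} Fix $\lambda>\lambda_0$. By Lemma~\ref{lema r less critical and p tilde consider the cases}, which needs only $p_1<r<\gamma^*$, there are $\rho,b>0$ with $I_\lambda\geq b$ on $\|u\|=\rho$. Moreover $I_\lambda>0$ on $0<\|u\|\leq\rho$, so $\|u_\lambda\|>\rho$. Set
\[
\Gamma=\{g\in C([0,1],W_0^{1,\vec\gamma}(\Omega)):g(0)=0,\ g(1)=u_\lambda\}
\]
and $c_\lambda=\inf_{g\in\Gamma}\max_{t\in[0,1]} I_\lambda(g(t))$. Every path crosses the sphere $\|u\|=\rho$, so $c_\lambda\geq b>0$.

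\emph{Step 3: the Palais--Smale condition.} This is the step that needs genuine checking. Coercivity from the proof of Theorem~\ref{Theorem p1 smallest r smallest critical range}, namely $I_\lambda(u)\geq c\mathcal M(u)-C_\lambda$, bounds any sequence with $I_\lambda(u_n)$ bounded. Hence PS sequences are bounded and, up to a subsequence, $u_n\rightharpoonup u$. Since $r<\gamma^*\leq\gamma_\infty$, Theorem~\ref{FraGaKow04} gives $u_n\to u$ in $L^r$. Then $\int|u_n|^{r-2}u_n(u_n-u)\to0$, so
\[
\langle\mathcal A(u_n),u_n-u\rangle=\langle I_\lambda'(u_n),u_n-u\rangle+\lambda\int_\Omega|u_n|^{r-2}u_n(u_n-u)\,dx\to0.
\]
Proposition~\ref{Splus} ($S_+$) now yields $u_n\to u$ strongly.

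\emph{Step 4: conclusion.} The mountain-pass theorem \cite{AM} gives a critical point $v_\lambda$ with $I_\lambda(v_\lambda)=c_\lambda>0$. Then $v_\lambda\neq0$ because $I_\lambda(0)=0$, and $v_\lambda\neq u_\lambda$ because $I_\lambda(u_\lambda)<0<I_\lambda(v_\lambda)$. No condition on $\gamma_-$ or $\overline\gamma$ enters beyond what the Lemma already handles; this justifies the last sentence of the statement.
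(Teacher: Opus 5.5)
Your proposal is correct and follows essentially the same route as the paper. Both arguments take the nonnegative global minimizer of negative energy from Theorem~\ref{Theorem p1 smallest r smallest critical range}, use the barrier of Lemma~\ref{lema r less critical and p tilde consider the cases} to force $\|u_\lambda\|>\rho$, obtain the Palais--Smale condition from coercivity, compact $L^r$ convergence and the $(S_+)$ property of Proposition~\ref{Splus}, and apply the mountain-pass theorem to paths from $0$ to $u_\lambda$.
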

\begin{proof}
The proof of Theorem
\ref{Theorem p1 smallest r smallest critical range} gives, for every
sufficiently large $\lambda$, a nonnegative global minimizer
$u_\lambda\neq0$ with $I_\lambda(u_\lambda)<0$. Lemma
\ref{lema r less critical and p tilde consider the cases} gives
$\rho,b>0$ with $I_\lambda\geq b$ on $\|u\|=\rho$, and
$I_\lambda(u)>0$ for $0<\|u\|\leq\rho$. In particular,
$\|u_\lambda\|>\rho$.

We verify compactness at every energy level. Coercivity bounds every
Palais--Smale sequence $(u_n)$. Passing to a subsequence, let
$u_n\rightharpoonup u$ in $W_0^{1,\vec\gamma}(\Omega)$.
Since $r<\gamma^*$, Theorem \ref{FraGaKow04} gives
$u_n\to u$ in $L^r(\Omega)$. H\"older's inequality implies
\[
 \left|\int_\Omega |u_n|^{r-2}u_n(u_n-u)\,dx\right|
 \leq\|u_n\|_r^{r-1}\|u_n-u\|_r\longrightarrow0.
\]
Also $I_\lambda'(u_n)[u_n-u]\to0$, because the sequence of
differences is bounded. Therefore
\[
 \langle\mathcal A(u_n),u_n-u\rangle\longrightarrow0.
\]
Proposition \ref{Splus} yields strong convergence in the energy space.
Thus $I_\lambda$ satisfies the Palais--Smale condition.

Let $\Gamma$ be the family of continuous paths from $0$ to
$u_\lambda$ in this space and set
\[
 c_\lambda=\inf_{g\in\Gamma}\max_{t\in[0,1]}I_\lambda(g(t)).
\]
Every such path crosses $\|u\|=\rho$, so $c_\lambda\geq b>0$.
The mountain-pass theorem gives a critical point $v_\lambda$ with
$I_\lambda(v_\lambda)=c_\lambda$. Its positive energy distinguishes
it from both zero and $u_\lambda$, proving the assertion.
\end{proof}

\subsection{{Partial resonance: \texorpdfstring{$r=p_1<p_N$}{r=p1<pN}}}

The equality $r=p_1$ leads to a different local geometry when the
smallest directional exponent does not occur in every direction. We
now determine the exact set of parameters admitting nontrivial
solutions and prove infinite multiplicity for each parameter in that
set. Only $(a_1)$--$(a_2)$ are needed; continuity of $a_i$ at zero
replaces any differentiability assumption there.

Throughout this subsection put $p=p_1<p_N$ and
\[
 Y=\{i:p_i=p\},\qquad Z=\{i:p_i>p\},\qquad
 X=W_0^{1,\vec\gamma}(\Omega).
\]
Both index sets are nonempty, and in particular $N\geq2$. Since
$\gamma_i\geq p_i\geq p$, one has $p<\gamma^*$, whether the
endpoint is finite or infinite. Thus $X$ embeds compactly in
$L^p(\Omega)$. Define the partial Rayleigh threshold
\begin{equation}\label{partial-rayleigh-threshold}
 \Lambda_Y(\Omega)=
 \inf_{0\neq v\in C_c^\infty(\Omega)}
 \frac{\displaystyle\sum_{i\in Y}a_i(0)
                \int_\Omega|\partial_i v|^p\,dx}
      {\displaystyle\int_\Omega|v|^p\,dx}.
\end{equation}
Here the derivatives in $Z$ are deliberately absent: they contribute
only at orders strictly higher than $p$ at small amplitudes.

\begin{lemma}\label{partial-threshold-test-space}
The number $\Lambda_Y(\Omega)$ belongs to $(0,\infty)$, and
\begin{equation}\label{partial-poincare}
 \sum_{i\in Y}a_i(0)\|\partial_i u\|_p^p
                 \geq\Lambda_Y(\Omega)\|u\|_p^p,
 \qquad u\in X.
\end{equation}
For every $\lambda>\Lambda_Y(\Omega)$ there are bounded open sets
$D\subset\R^{|Y|}$, $E\subset\R^{|Z|}$, with
$\overline{D\times E}\subset\Omega$, and a nonzero
$\varphi\in C_c^\infty(D)$ such that
\begin{equation}\label{partial-section-rayleigh}
 R_Y(\varphi):=
 \frac{\displaystyle\sum_{i\in Y}a_i(0)
                         \int_D|\partial_i\varphi|^p\,dy}
      {\displaystyle\int_D|\varphi|^p\,dy}<\lambda.
\end{equation}
Coordinates in $Y$ and $Z$ are denoted by $y$ and $z$, respectively;
this is a regrouping of the fixed coordinate directions.
\end{lemma}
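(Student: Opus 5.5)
The plan is to prove the three assertions in order: first that $\Lambda_Y(\Omega)$ is finite and positive, then the inequality \eqref{partial-poincare} by density, and finally the sectional construction by Fubini.

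\textbf{Finiteness and positivity.} Finiteness is immediate: any fixed $0\neq v\in C_c^\infty(\Omega)$ gives a finite quotient in \eqref{partial-rayleigh-threshold}. For positivity, fix any $i\in Y$; the set $Y$ is nonempty since $1\in Y$. Since $a_i$ is positive, $a_i(0)>0$. The directional Poincar\'e inequality \eqref{API} with exponent $p$ gives $\int_\Omega|v|^p\,dx\leq (dp/2)^p\int_\Omega|\partial_i v|^p\,dx$. Dropping the remaining nonnegative terms of the numerator, we get $\Lambda_Y(\Omega)\geq a_i(0)(2/(dp))^p>0$.

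\textbf{The inequality \eqref{partial-poincare}.} It holds on $C_c^\infty(\Omega)$ by the definition of the infimum. To pass to $X$, we use density of $C_c^\infty(\Omega)$ in $X=W_0^{1,\vec\gamma}(\Omega)$, which is standard for bounded smooth domains and is implicit in the framework of Theorem~\ref{FraGaKow04}. Take $v_n\to u$ in $X$. For $i\in Y$ we have $\gamma_i\geq p$, so $\partial_i v_n\to\partial_i u$ in $L^p(\Omega)$, and $v_n\to u$ in $L^p(\Omega)$ by the (compact) embedding. Both sides of \eqref{partial-poincare} are continuous under these convergences, so the inequality passes to the limit.

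\textbf{Sectional construction.} This is the only genuinely new step. Given $\lambda>\Lambda_Y(\Omega)$, choose $0\neq v\in C_c^\infty(\Omega)$ whose quotient $\lambda'$ in \eqref{partial-rayleigh-threshold} satisfies $\lambda'<\lambda$. Let $K=\operatorname{supp}v$, a compact subset of $\Omega$. Write $x=(y,z)$ and define
\[
\mathcal N(z)=\sum_{i\in Y}a_i(0)\int|\partial_{y_i}v(y,z)|^p\,dy,
\qquad
\mathcal D(z)=\int|v(y,z)|^p\,dy .
\]
These are continuous, compactly supported functions of $z$. By Fubini,
\[
\int\bigl(\mathcal N(z)-\lambda'\mathcal D(z)\bigr)\,dz=0,
\qquad \int\mathcal D(z)\,dz>0 .
\]
We claim there is $z_0$ with $\mathcal D(z_0)>0$ and $\mathcal N(z_0)<\lambda\mathcal D(z_0)$. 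If not, then $\mathcal N\geq\lambda\mathcal D$ wherever $\mathcal D>0$. Where $\mathcal D=0$, the section vanishes identically, so $\mathcal N=0$ there too. Integrating then gives
\[
\lambda'\int\mathcal D=\int\mathcal N\geq\lambda\int\mathcal D>\lambda'\int\mathcal D,
\]
a contradiction.

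Set $\varphi(y)=v(y,z_0)$. This function is smooth and nonzero, and its support $S$ satisfies $S\times\{z_0\}\subset K$. Let $0<\varepsilon<\operatorname{dist}(K,\partial\Omega)/2$. Take $D$ to be the open $\varepsilon$-neighbourhood of $S$ in $\R^{|Y|}$, and $E$ the open ball of radius $\varepsilon$ about $z_0$ in $\R^{|Z|}$. Every point of $\overline{D\times E}$ lies within distance $\sqrt2\,\varepsilon<\operatorname{dist}(K,\partial\Omega)$ of $K$. Hence $\overline{D\times E}\subset\Omega$, $D$ and $E$ are bounded, and $\varphi\in C_c^\infty(D)$ with $R_Y(\varphi)<\lambda$.

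The main point requiring care is this slicing selection: the strict inequality must survive on a single section, which is why one first passes from $\lambda$ to a test function with quotient $\lambda'<\lambda$. Some care is also needed in checking that the slab $\overline{D\times E}$ stays inside $\Omega$.
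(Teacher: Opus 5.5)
Your proof is correct and follows essentially the same route as the paper: directional Poincar\'e for positivity and finiteness, density of $C_c^\infty(\Omega)$ in $X$ to pass the inequality to the limit, and Fubini slicing to select a section $z_0$ on which the $Y$-quotient is below $\lambda$. The only differences are in emphasis: the paper sketches the density argument (zero extension, boundary charts, translation and mollification) where you cite it as standard, whereas you spell out the contradiction argument for choosing $z_0$ and give explicit neighbourhoods $D,E$ where the paper leaves these implicit.
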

\begin{proof}
Fix $i\in Y$. Directional Poincar\'e gives
$\|v\|_p\leq C_i\|\partial_i v\|_p$, so the quotient in
\eqref{partial-rayleigh-threshold} is at least
$a_i(0)C_i^{-p}>0$. Any fixed nonzero smooth compactly supported
function gives a finite upper bound.

We recall the density needed to pass this inequality to $X$.
Directional Poincar\'e implies $u\in L^{\gamma_i}(\Omega)$ for
every $i$. The zero extension of a function in $X$ has distributional
derivatives equal to the zero extensions of $\partial_i u$, since
its $W^{1,1}$ trace vanishes. On a smooth bounded domain, a finite
boundary partition of unity, small translations into the domain in
each boundary chart, and convolution produce approximations in
$C_c^\infty(\Omega)$. Translations and convolution preserve the
fixed derivative directions and converge in each $L^{\gamma_i}$;
the terms arising from the partition are controlled by
$u\in L^{\gamma_i}$. The approximation therefore converges in $X$.
Because $\gamma_i\geq p$ in $Y$, its derivatives in these directions
also converge in $L^p$, and its functions converge in $L^p$.
Passing the defining inequality to the limit proves
\eqref{partial-poincare}.

Choose $0\neq v\in C_c^\infty(\Omega)$ with partial quotient less
than $\lambda$, and extend it by zero to $\R^N$. Define
\[
 B(z)=\int_{\R^{|Y|}}|v(y,z)|^p\,dy,
 \qquad
 Q(z)=\sum_{i\in Y}a_i(0)
            \int_{\R^{|Y|}}|\partial_i v(y,z)|^p\,dy.
\]
Fubini's theorem gives $\int Q(z)\,dz<\lambda\int B(z)\,dz$.
Consequently some $z_0$ satisfies $B(z_0)>0$ and
$Q(z_0)<\lambda B(z_0)$. Put $\varphi(y)=v(y,z_0)$.
The set $\operatorname{supp}\varphi\times\{z_0\}$ is compactly
contained in $\Omega$. Taking sufficiently small neighborhoods of
its factors gives bounded open $D,E$ with
$\overline{D\times E}\subset\Omega$ and
$\operatorname{supp}\varphi\subset D$. This proves
\eqref{partial-section-rayleigh}. Notice that no product structure
of the whole domain has been assumed.
\end{proof}

We will use the following standard Clark--Heinz principle. If an even
functional $J\in C^1(X,\R)$ is bounded below, satisfies the
Palais--Smale condition, has $J(0)=0$, and, for every $m\in\N$,
there is an $m$-dimensional subspace $V_m$ and a radius $\rho_m>0$
such that
\[
 \sup\{J(v):v\in V_m,\ \|v\|=\rho_m\}<0,
\]
then $J$ has negative critical values $c_m$ tending to zero. This is
the genus minimax result stated as Theorem A in \cite{LiuWangClark}.
We verify every hypothesis below and identify zero as the only
critical point at energy zero to obtain strong convergence of the
solutions themselves.

\begin{theorem}\label{partial-resonance-infinite}
Assume $(a_1)$--$(a_2)$ for every $i$, and let
$1<r=p_1=:p<p_N$. Let $\Lambda_Y(\Omega)$ be defined by
\eqref{partial-rayleigh-threshold}. Then:
\begin{enumerate}[label=\textup{(\roman*)}]
\item For every $\lambda\leq\Lambda_Y(\Omega)$, zero is the only
weak solution of \eqref{PLR}.
\item For each fixed $\lambda>\Lambda_Y(\Omega)$, there is a
sequence of nonzero weak solutions $u_n\in X$ such that
$u_n\neq\pm u_m$ for $n\neq m$, and
\[
 I_\lambda(u_n)<0,\qquad I_\lambda(u_n)\longrightarrow0,
 \qquad \|u_n\|\longrightarrow0.
\]
In particular, there are infinitely many distinct pairs
$\{u_n,-u_n\}$ for that same parameter.
\item For $\lambda>\Lambda_Y(\Omega)$, the functional has a
nonzero nonnegative global minimizer of negative energy, whereas
zero is not a local minimum. Every nonzero critical point has
strictly negative energy.
\end{enumerate}
No assumption $(a_3)$, no upper restriction on $\widetilde p$ relative
to $\gamma^*$, and no assertion about the sign of the sequence in
\textup{(ii)} are required.
\end{theorem}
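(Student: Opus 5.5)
We prove the three items in the order (i), (iii), (ii). The reason is that the energy identity at critical points, used in (iii), is exactly what identifies zero as the only critical point at level zero in (ii).

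\emph{Item (i).} If $\lambda\leq0$ we invoke Theorem~\ref{teo1}. If $0<\lambda\leq\Lambda_Y(\Omega)$, test the equation with $u$:
\[
\sum_{i=1}^N\int_\Omega a_i(|\partial_iu|^{p_i})|\partial_iu|^{p_i}\,dx=\lambda\|u\|_p^p .
\]
By $(a_2)$, $a_i(|\partial_iu|^{p})\geq a_i(0)$ for $i\in Y$. By $(a_1)$, $a_i\geq\alpha_i$ for $i\in Z$. Combining these with \eqref{partial-poincare} gives
\[
\Lambda_Y\|u\|_p^p+\sum_{i\in Z}\alpha_i\|\partial_iu\|_{p_i}^{p_i}\leq\lambda\|u\|_p^p\leq\Lambda_Y\|u\|_p^p .
\]
Hence $\partial_iu=0$ for every $i\in Z\neq\emptyset$, and \eqref{API} in such a direction forces $u=0$.

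\emph{Structural properties of $I_\lambda$.} $I_\lambda$ is even, $C^1$ and $I_\lambda(0)=0$. Fix $k\in Z$. By \eqref{API} and H\"older, $\|u\|_p^p\leq C\|\partial_ku\|_{p_k}^p$. Since $p<p_k$, Young's inequality absorbs the reaction into the $p_k$-term of \eqref{energy-growth}, so $I_\lambda\geq c\mathcal M(u)-C_\lambda$. Thus $I_\lambda$ is bounded below and coercive. The Palais--Smale condition then follows exactly as in the proof of Theorem~\ref{palais smale r menor p tilde}: compact embedding into $L^p$ plus Proposition~\ref{Splus}.

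\emph{Item (iii).} The direct method (Lemma~\ref{I f.s.c.i for r less than gamma}) gives a global minimizer, which may be replaced by its absolute value. Its energy is negative, and zero is not a local minimum, because of the small-amplitude estimate below. For the last claim, let $u\neq0$ be critical. Subtracting $\frac1p$ times the Euler identity from $I_\lambda(u)$ gives
\[
I_\lambda(u)=\sum_{i}\int_\Omega\Big[\tfrac1{p_i}A_i(s_i)-\tfrac1p\,s_ia_i(s_i)\Big]dx,
\qquad s_i=|\partial_iu|^{p_i}.
\]
The $Y$-terms are $\leq0$ because $A_i(s)\leq sa_i(s)$. Each $Z$-term is bounded by $(\frac1{p_i}-\frac1p)\int s_ia_i(s_i)<0$ unless $\partial_iu\equiv0$. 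As in (i), $\partial_iu\equiv0$ for all $i\in Z$ forces $u=0$. Hence $I_\lambda(u)<0$ for every nonzero critical point, and zero is the only critical point at level zero.

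\emph{Item (ii): the subspaces $V_m$.} Take $D,E,\varphi$ from Lemma~\ref{partial-threshold-test-space}. Choose $\psi_1,\dots,\psi_m\in C_c^\infty(E)\setminus\{0\}$ with pairwise disjoint supports, and set
\[
V_m=\operatorname{span}\{\varphi(y)\psi_j(z)\}_{j=1}^m .
\]
For $v=\varphi\,\psi\in V_m$ with $\psi=\sum c_j\psi_j$, we have $\partial_iv=\psi\,\partial_i\varphi$ for $i\in Y$. Fubini then gives
\[
\sum_{i\in Y}a_i(0)\|\partial_iv\|_p^p-\lambda\|v\|_p^p=(R_Y(\varphi)-\lambda)\|\varphi\|_{L^p(D)}^p\|\psi\|_{L^p(E)}^p<0 .
\]
Now expand $I_\lambda(tv)$ for $t\to0^+$. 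By continuity of $a_i$ at $0$, $A_i(t^ps)/t^p\to a_i(0)s$ uniformly for $s$ bounded, and all $\partial_iv$ are bounded. Hence the $Y$-part equals $\frac{t^p}{p}\big(\sum_{i\in Y}a_i(0)\|\partial_iv\|_p^p+o(1)\big)$. The $Z$-part is $O(t^{p_{\min Z}})=o(t^p)$ by \eqref{desig of item a1}. Therefore
\[
I_\lambda(tv)=\frac{t^p}{p}\Big(\sum_{i\in Y}a_i(0)\|\partial_iv\|_p^p-\lambda\|v\|_p^p+o(1)\Big).
\]

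\emph{Item (ii): uniformity and conclusion.} The main obstacle is making this expansion uniform over the unit sphere $S_m$ of $V_m$. Since $V_m$ is finite dimensional and consists of smooth functions, norms are equivalent on it. So $\|\partial_iv\|_\infty$ is bounded on $S_m$, and the bracketed leading coefficient is continuous and strictly negative on the compact set $S_m$. Hence it is bounded above by a negative constant there, and there is $\rho_m>0$ with $\sup_{\|v\|=\rho_m,\,v\in V_m}I_\lambda(v)<0$. Theorem~A of \cite{LiuWangClark} then yields critical values $c_m<0$ with $c_m\to0$, and critical points $u_m$ with $I_\lambda(u_m)=c_m$. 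Passing to a subsequence with strictly increasing $c_m$ gives $u_n\neq\pm u_m$, since $I_\lambda$ is even. By Palais--Smale, every subsequence of $(u_m)$ has a further subsequence converging to a critical point at level zero. By (iii) that limit is $0$, so $\|u_m\|\to0$.
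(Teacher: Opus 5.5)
Your proposal is correct and follows essentially the same route as the paper. Both arguments test the equation with $u$ against the partial Poincar\'e inequality \eqref{partial-poincare} for (i), obtain coercivity and Palais--Smale via Proposition~\ref{Splus}, build negative spheres from product subspaces $\varphi(y)\psi(z)$ with a small-amplitude expansion that is uniform on the compact unit sphere of $V_m$, apply the Liu--Wang form of Clark's theorem, and use the identity $I_\lambda(u)-\frac1p I_\lambda'(u)u$ to show that zero is the only critical point at level zero, which forces $\|u_n\|\to0$. The differences are cosmetic: you treat $\lambda\le0$ separately via Theorem~\ref{teo1}, use $\alpha_i$ instead of $a_i(0)$ in the $Z$-directions, and prove (iii) before (ii).
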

\begin{proof}
\emph{Step 1: the necessary threshold.}
Let $u\in X$ be a weak solution. Since $a_i$ is nondecreasing,
$a_i(s)\geq a_i(0)>0$ for all $s\geq0$. Testing with $u$ and using
\eqref{partial-poincare}, we obtain
\begin{align*}
 \lambda\|u\|_p^p
 &=\sum_i\int_\Omega
       a_i(|\partial_i u|^{p_i})|\partial_i u|^{p_i}\,dx\\
 &\geq\Lambda_Y(\Omega)\|u\|_p^p
       +\sum_{i\in Z}\int_\Omega
       a_i(|\partial_i u|^{p_i})|\partial_i u|^{p_i}\,dx.
\end{align*}
If $u\neq0$, fix any $j\in Z$. Inequality \eqref{API} with exponent
$p_j$ gives $\|u\|_{p_j}\leq C_j\|\partial_j u\|_{p_j}$;
thus this derivative is nonzero and the last sum is strictly
positive. It follows that $\lambda>\Lambda_Y(\Omega)$, proving (i).

\emph{Step 2: coercivity and compactness.}
The principal functional is $C^1$ by Proposition \ref{Splus}, and
the compact embedding $X\hookrightarrow L^p(\Omega)$ makes the
reaction functional $C^1$. The energy is even and $I_\lambda(0)=0$.
By \eqref{desig of item a1},
\[
 I_\lambda(u)\geq c_0\mathcal M(u)-\frac\lambda p\|u\|_p^p,
 \qquad c_0>0.
\]
Choose $j\in Z$. H\"older and directional Poincar\'e give
$\|u\|_p^p\leq C\|\partial_j u\|_{p_j}^p$.
Since $p<p_j$, Young's inequality yields
\[
 \frac\lambda p\|u\|_p^p
 \leq\frac{c_0}{2}\|\partial_j u\|_{p_j}^{p_j}+C_\lambda.
\]
Hence $I_\lambda(u)\geq(c_0/2)\mathcal M(u)-C_\lambda$.
As $\|u\|\to\infty$, the maximum of the finitely many derivative
norms tends to infinity, so $\mathcal M(u)\to\infty$. The energy is
therefore coercive and bounded below.

For a Palais--Smale sequence, coercivity gives boundedness in $X$.
After extraction, $u_n\rightharpoonup u$ in $X$ and
$u_n\to u$ in $L^p(\Omega)$. Therefore
\[
 \left|\int_\Omega |u_n|^{p-2}u_n(u_n-u)\,dx\right|
 \leq\|u_n\|_p^{p-1}\|u_n-u\|_p\longrightarrow0.
\]
The derivative identity gives
$\langle\mathcal A(u_n),u_n-u\rangle\to0$.
Proposition \ref{Splus} implies $u_n\to u$ in $X$.
This proves Palais--Smale at every level.

\emph{Step 3: negative spheres of every finite dimension for one
fixed parameter.}
Fix $\lambda>\Lambda_Y(\Omega)$ and take $D,E,\varphi$ from Lemma
\ref{partial-threshold-test-space}. Write
$d=\lambda-R_Y(\varphi)>0$. For arbitrary $m\in\N$, choose an
$m$-dimensional subspace $F_m\subset C_c^\infty(E)$ and put
\[
 V_m=\{\varphi(y)\psi(z):\psi\in F_m\},
\]
extending its elements by zero outside $D\times E$. These elements
belong to $C_c^\infty(\Omega)$, and multiplication by
$\varphi\neq0$ is injective, so $\dim V_m=m$.
For $w=\varphi\psi$, separation of the integrals gives
\begin{equation}\label{partial-negative-leading-form}
 \sum_{i\in Y}a_i(0)\|\partial_i w\|_p^p
       -\lambda\|w\|_p^p=-d\|w\|_p^p.
\end{equation}

Let $S_m=\{w\in V_m:\|w\|=1\}$. This set is compact and consists
of smooth functions supported in a fixed compact subset of
$\Omega$. Equivalence of norms on $V_m$ gives constants $K_m<\infty$
and $b_m>0$ such that
\[
 \max_i\|\partial_i w\|_\infty\leq K_m,
 \qquad \|w\|_p^p\geq b_m,\qquad w\in S_m.
\]
Set
$\omega_i(h)=\sup_{0\leq s\leq h}|a_i(s)-a_i(0)|$.
Continuity at zero gives $\omega_i(h)\to0$ as $h\downarrow0$, and
\[
 |A_i(s)-a_i(0)s|\leq s\omega_i(s).
\]
For $i\in Y$, consequently,
\[
 \sup_{w\in S_m}\left|
 t^{-p}\int_\Omega A_i(t^p|\partial_i w|^p)\,dx
       -a_i(0)\|\partial_i w\|_p^p\right|
 \leq |\Omega|K_m^p\omega_i(t^pK_m^p)\longrightarrow0.
\]
For $i\in Z$, $a_i$ is bounded near zero, so for sufficiently small
$t$ and all $w\in S_m$,
\[
 0\leq t^{-p}\int_\Omega A_i(t^{p_i}|\partial_i w|^{p_i})\,dx
       \leq C_m t^{p_i-p}\longrightarrow0.
\]
Combining these uniform estimates with
\eqref{partial-negative-leading-form}, we obtain
\[
 t^{-p}I_\lambda(tw)=-\frac d p\|w\|_p^p+o_m(1),
 \qquad w\in S_m,
\]
where $o_m(1)\to0$ uniformly as $t\downarrow0$.
Choose $\rho_m>0$ so small that the absolute value of this remainder
is at most $db_m/(2p)$. Then
\[
 \sup_{w\in S_m}I_\lambda(\rho_m w)
       \leq-\frac{db_m}{2p}\rho_m^p<0.
\]
The parameter $\lambda$ has remained fixed throughout; only
$V_m$ and $\rho_m$ depend on $m$. The Clark--Heinz principle now
gives negative critical values converging to zero.

\emph{Step 4: the energy identity and strong convergence to zero.}
If $I_\lambda'(u)=0$, put $s_i=|\partial_i u|^{p_i}$ and subtract
$I_\lambda'(u)[u]/p$ from the energy. Monotonicity implies
$A_i(s)\leq s a_i(s)$, so
\begin{align}
 I_\lambda(u)
 &=\sum_i\int_\Omega
       \left(\frac{A_i(s_i)}{p_i}-\frac{a_i(s_i)s_i}{p}\right)dx
       \notag\\
 &\leq\sum_{i\in Z}\left(\frac1{p_i}-\frac1p\right)
                         \int_\Omega a_i(s_i)s_i\,dx.
 \label{partial-critical-energy}
\end{align}
For $u\neq0$ the last expression is strictly negative, by the same
directional Poincar\'e argument as in Step 1. Thus zero is the only
critical point with energy zero. Select distinct negative critical
values tending to zero and corresponding critical points $u_n$.
Distinct values ensure $u_n\neq\pm u_m$. The sequence is
Palais--Smale at zero; every subsequence has a further subsequence
converging strongly to a critical point of energy zero, hence to
zero. It follows that the whole selected sequence converges to zero
in $X$, proving (ii).

\emph{Step 5: the global minimum and the geometry at zero.}
Coercivity and Lemma \ref{I f.s.c.i for r less than gamma} give a
global minimizer. Step 3 with $m=1$ gives negative energy, so the
minimizer is nonzero; replacing it by its absolute value preserves
the energy and yields a nonnegative minimizer. Since the energy is
$C^1$, this minimizer solves the full weak equation. Moreover Step 3
provides negative values at arbitrarily small nonzero amplitudes,
which proves that zero is not a local minimum. Identity
\eqref{partial-critical-energy} proves the remaining statement in
(iii).
\end{proof}

\begin{remark}\label{partial-contribution}
The conclusion determines the exact admissible parameter set
$(\Lambda_Y(\Omega),\infty)$ and gives infinitely many pairs for
each fixed parameter in that interval, together with strong
convergence to zero. In the homogeneous anisotropic case,
\cite[Theorem 1.2]{FigueiredoSantosSuarez2015} gives arbitrarily many
pairs for sufficiently large parameters, with a threshold depending
on the prescribed number of pairs. The present conclusion at
$r=p_1<p_N$ identifies a single exact threshold and remains valid
for the nonhomogeneous coefficients satisfying $(a_1)$--$(a_2)$.
The threshold depends on the small-amplitude coefficients $a_i(0)$
in $Y$ and the domain, while all active higher phases are permitted.
Neither positivity nor nodality of the sequence is asserted.
\end{remark}

\begin{corollary}\label{partial-disk-example}
Let $\Omega=B_R(0)\subset\R^2$, $P>2$, and consider
\[
 -\partial_{11}u
 -\partial_2\bigl(|\partial_2u|^{P-2}\partial_2u\bigr)
 =\lambda u\quad\hbox{in }B_R(0),\qquad u=0\quad\hbox{on }\partial B_R(0).
\]
Its partial threshold is $\Lambda_Y=\pi^2/(4R^2)$. For each
$\lambda>\pi^2/(4R^2)$ it has infinitely many distinct pairs of
nonzero weak solutions converging to zero in
$W_0^{1,(2,P)}(B_R(0))$. For $\lambda\leq\pi^2/(4R^2)$ it has only
zero. In contrast, $-\Delta u=\lambda u$ on the same disk has only
zero throughout
\[
 \frac{\pi^2}{4R^2}<\lambda<\frac{\pi^2}{2R^2}.
\]
\end{corollary}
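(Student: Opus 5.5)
The plan is to read the corollary as the special case of Theorem~\ref{partial-resonance-infinite} with $N=2$, $p_1=r=p=2<p_2=P$, $Y=\{1\}$, $Z=\{2\}$, $a_1\equiv1$, $a_2\equiv1$. These coefficients are constant, so $H(\zeta_i)=0$, $\gamma=(2,P)$, and $(a_1)$--$(a_2)$ hold trivially. The energy space is $X=W_0^{1,(2,P)}(B_R(0))$. Once this identification is made, items (i) and (ii) of that theorem give nonexistence for $\lambda\le\Lambda_Y$ and, for each $\lambda>\Lambda_Y$, infinitely many pairs converging to zero in $X$. The only real work is to compute $\Lambda_Y$ and to check the Laplacian claim.

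To compute
\[
\Lambda_Y=\inf_{0\ne v\in C_c^\infty(B_R)}\frac{\int|\partial_1v|^2}{\int|v|^2},
\]
I would use Fubini over horizontal sections. For fixed $x_2\in(-R,R)$, the section is the interval $|x_1|<\sqrt{R^2-x_2^2}$, whose length is at most $2R$. The one-dimensional Dirichlet Poincar\'e inequality on an interval of length $\ell$ has constant $\pi^2/\ell^2\ge\pi^2/(4R^2)$. Integrating this over $x_2$ gives the lower bound $\Lambda_Y\ge\pi^2/(4R^2)$.

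For the upper bound, I would take product test functions $v(x)=\phi(x_1)\chi(x_2)$. Here $\phi$ approximates $\cos(\pi x_1/(2R'))$ on $(-R',R')$ with $R'<R$, and $\chi$ is supported in $|x_2|<\varepsilon$ with $\varepsilon$ chosen so that the rectangle fits inside the disk. The quotient then equals the one-dimensional quotient of $\phi$, which tends to $\pi^2/(4R^2)$ as $R'\uparrow R$ and $\phi$ is smoothed. This is the step needing the most care, though it is routine: the disk is not a product, so the argument has to use thin strips near the diameter. It works because the horizontal sections near $x_2=0$ have length close to $2R$.

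For the Laplacian contrast, I would use that the first Dirichlet eigenvalue of $B_R$ is $j_{0,1}^2/R^2\approx 5.783/R^2$. This exceeds $\pi^2/(2R^2)\approx4.93/R^2$. Hence for $\lambda$ below the first eigenvalue, testing $-\Delta u=\lambda u$ with $u$ and applying the Poincar\'e inequality forces $u=0$. In particular this holds on the stated interval.
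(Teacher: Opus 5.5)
Your proposal is correct, and most of it is the paper's own argument. The identification with Theorem~\ref{partial-resonance-infinite} ($p_1=r=2<p_2=P$, $a_1=a_2\equiv1$, inactive higher phases, $Y=\{1\}$, $Z=\{2\}$) is the same. So is the computation of $\Lambda_Y$: the sectionwise one-dimensional Dirichlet Poincar\'e inequality on horizontal chords of length at most $2R$ gives the lower bound. Products of an approximate first Dirichlet eigenfunction on $(-\ell,\ell)$ with a bump in a thin strip $|x_2|<h$, $\ell^2+h^2<R^2$, give the upper bound.

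The one real difference is the Laplacian contrast. You use the explicit first Dirichlet eigenvalue of the disk, $j_{0,1}^2/R^2\approx 5.78/R^2$, which exceeds $\pi^2/(2R^2)\approx 4.93/R^2$. The paper instead applies the same chordwise inequality in each coordinate direction and adds the two, obtaining $\int_{B_R}|\nabla u|^2\,dx\geq \frac{\pi^2}{2R^2}\int_{B_R}u^2\,dx$.

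\emph{Your route:} it gives a larger nonexistence range, all $\lambda<j_{0,1}^2/R^2$. It relies on Bessel-function facts from outside the paper.

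\emph{The paper's route:} it is elementary and reuses the very inequality that computes $\Lambda_Y$. It also shows the mechanism the example is meant to illustrate. For the Laplacian both directions enter at quadratic order, so the isotropic constant is the sum of the two directional constants. In the anisotropic problem only the $Y$-direction appears at order $p$.

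Either argument establishes nonexistence on the stated interval $\bigl(\pi^2/(4R^2),\pi^2/(2R^2)\bigr)$.
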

\begin{proof}
Take $p_1=2$, $p_2=P$ and $a_1=a_2=1$, with inactive higher phases.
These choices satisfy $(a_1)$--$(a_2)$. For each fixed $x_2$, the
horizontal section of the disk is an interval of length at most
$2R$. The sharp one-dimensional Dirichlet Poincar\'e inequality,
integrated over $x_2$, gives
\[
 \|\partial_1v\|_2^2\geq\frac{\pi^2}{4R^2}\|v\|_2^2,
 \qquad v\in C_c^\infty(B_R(0)).
\]
Thus $\Lambda_Y\geq\pi^2/(4R^2)$. For $0<\ell<R$, choose $h>0$
with $\ell^2+h^2<R^2$ and take product test functions supported in
$(-\ell,\ell)\times(-h,h)$. Approximate the first Dirichlet
eigenfunction on $(-\ell,\ell)$ by functions in
$C_c^\infty(-\ell,\ell)$ in $H_0^1$, and multiply by a fixed
nonzero element of $C_c^\infty(-h,h)$. Their partial quotients tend
to $\pi^2/(4\ell^2)$. Letting $\ell\uparrow R$ proves equality.
Theorem \ref{partial-resonance-infinite} now applies.

For the Laplacian, the same sectionwise argument in each coordinate
direction gives
\[
 \int_{B_R}|\nabla u|^2\,dx
 \geq\frac{\pi^2}{2R^2}\int_{B_R}|u|^2\,dx,
 \qquad u\in H_0^1(B_R).
\]
Testing the linear weak equation with $u$ excludes every nonzero
solution for $\lambda<\pi^2/(2R^2)$, proving the comparison.
\end{proof}

The mechanism behind the preceding comparison can already be seen from the behavior of the energy near the origin. The directions in $Y$ contribute terms of order $p$, while the directions in $Z$ contribute only terms of strictly higher order, since $p_i>p$ for every $i\in Z$. Consequently, on the product spaces constructed above, the leading-order part of the energy depends only on the $Y$-derivatives and on the reaction term. The variables associated with $Z$ may therefore vary in subspaces of arbitrarily large finite dimension without producing any derivative contribution at order $p$. In the quadratic example, this means that the leading-order energy near zero is determined by
\[
\int_\Omega |\partial_1u|^2\,dx-\lambda\int_\Omega |u|^2\,dx,
\]
whereas the term involving $\partial_2u$ has order $P>2$. Hence, once $\lambda$ exceeds the partial threshold, the leading-order energy admits negative subspaces of arbitrarily large finite dimension. At the same time, the higher-order directional term remains present in the full nonlinear energy and provides the coercivity and compactness needed at Palais--Smale levels. This is precisely what distinguishes the anisotropic problem from the Laplacian: in the latter, both coordinate derivatives already appear at quadratic order, so no direction is absent from the leading-order energy.

\subsection{{Singular case \texorpdfstring{$0<r<1$}{0<r<1}}}

The proof of the result below is motivated by the variational approach
used in \cite{Yijing} for singular semilinear elliptic problems. Here,
we adapt these ideas to the present anisotropic nonhomogeneous setting.

\begin{theorem}\label{Singular Case Theorem}
Assume that $(a_1)-(a_2)$ and $0<r<1<p_1$ hold.
Then the problem \eqref{PLR} has at least one non-trivial solution
$u$ such that $u>0$ a.e. in $\Omega$ for each $\lambda\in(0,+\infty)$. 

Moreover, this positive a.e. weak solution is unique; no boundedness assumption on quotients is required.
\end{theorem}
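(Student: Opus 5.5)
For existence we minimize the non-differentiable energy
\[
I_\lambda(u)=\sum_{i=1}^N\frac1{p_i}\int_\Omega A_i(|\partial_i u|^{p_i})\,dx-\frac{\lambda}{r}\int_\Omega |u|^r\,dx
\]
over $X=W_0^{1,\vec\gamma}(\Omega)$, following \cite{Yijing}; uniqueness then comes from the strict monotonicity of $\mathcal A$ in Proposition \ref{Splus}.

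\emph{Minimization.} Since $0<r<1$, H\"older's inequality on the bounded domain and \eqref{API} give $\int_\Omega|u|^r\le C\|\partial_1u\|_{p_1}^r$. Together with \eqref{desig of item a1} and $r<1<p_1$, this shows that $I_\lambda$ is coercive and bounded below. The term $u\mapsto\int_\Omega|u|^r$ is weakly continuous, by compactness into $L^1$ and the inequality $\bigl||a|^r-|b|^r\bigr|\le|a-b|^r$. The principal part is convex and continuous, as in Lemma \ref{I f.s.c.i for r less than gamma}. Hence a global minimizer $u$ exists, and we may take $u\ge0$ because $I_\lambda(|u|)=I_\lambda(u)$. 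Testing with $t\varphi$, $\varphi\in C_c^\infty$, $t\to0^+$, gives $I_\lambda(t\varphi)<0$, so $u\neq0$ and $I_\lambda(u)<0$.

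\emph{Positivity a.e. and the Euler--Lagrange equation.} This is the main obstacle: $I_\lambda$ is not Gâteaux differentiable where $u=0$, and no strong maximum principle is available. First fix $\varphi\in X$ with $\varphi\ge0$. Minimality gives $I_\lambda(u+t\varphi)-I_\lambda(u)\ge0$ for $t>0$. Divide by $t$, and use Fatou's lemma on $\frac{(u+t\varphi)^r-u^r}{rt}$, which is nonnegative and increases to $u^{r-1}\varphi$ as $t\downarrow0$ (with value $+\infty$ where $u=0<\varphi$). This yields
\[
\lambda\int_\Omega u^{r-1}\varphi\,dx\le\langle\mathcal A(u),\varphi\rangle<\infty .
\]
Now choose $\varphi>0$ a.e.; for instance, $\varphi=e$, the solution of $\mathcal L_Ae=1$ obtained by minimization, or a positive sum of bumps. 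The finiteness above then forces $|\{u=0\}|=0$, so $u>0$ a.e. and $u^{r-1}\varphi\in L^1$.

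Next, for the equality we follow the argument of Sun--Wu--Long. Take $\psi\in X$ and $\varepsilon>0$, and test the inequality with $\Phi=(u+\varepsilon\psi)^+$. Using $\Phi=u+\varepsilon\psi+(u+\varepsilon\psi)^-$, we get
\[
0\le\langle\mathcal A(u)-\lambda u^{r-1},\,u+\varepsilon\psi\rangle+\int_{\{u+\varepsilon\psi<0\}}\bigl(\ldots\bigr).
\]
The scaling $t\mapsto I_\lambda(tu)$ has a minimum at $t=1$. Differentiating there gives $\langle\mathcal A(u),u\rangle=\lambda\int_\Omega u^r\,dx$, where the derivative of the principal part in $t$ is handled via dominated convergence and $(a_1)$. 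Therefore
\[
0\le\varepsilon\langle\mathcal A(u)-\lambda u^{r-1},\psi\rangle-\langle\mathcal A(u),(u+\varepsilon\psi)^-\rangle .
\]
The last term is $o(\varepsilon)$, because $|\{u+\varepsilon\psi<0\}|\to0$ and $(u+\varepsilon\psi)^-\le\varepsilon|\psi|$ there, by absolute continuity of the integrals of $a_i(|\partial_iu|^{p_i})|\partial_iu|^{p_i-1}|\partial_i\psi|$. Dividing by $\varepsilon$ and replacing $\psi$ by $-\psi$ gives the weak equation for every $\psi\in X$, with $u^{r-1}\psi\in L^1$.

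\emph{Uniqueness.} Let $u_1,u_2>0$ a.e. be two weak solutions and test both equations with $u_1-u_2\in X$. The singular integrals are finite, since $u_j^{r-1}u_k\in L^1$ follows from the weak formulation with $\psi=u_k$, extended by density and Fatou's lemma. Subtracting gives
\[
\langle\mathcal A(u_1)-\mathcal A(u_2),u_1-u_2\rangle=\lambda\int_\Omega\bigl(u_1^{r-1}-u_2^{r-1}\bigr)(u_1-u_2)\,dx\le0,
\]
because $t\mapsto t^{r-1}$ is decreasing. By the strict monotonicity in Proposition \ref{Splus}, $u_1=u_2$. No comparability of quotients is needed here, unlike in Theorem \ref{Comparison principle}; alternatively, Theorem \ref{Comparison principle} could be used after a truncation argument, but the direct monotonicity route is cleaner.
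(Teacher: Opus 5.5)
Your proposal is correct in substance. It agrees with the paper on three points: the minimization, the Fatou argument giving $\lambda\int_\Omega u^{r-1}\varphi\,dx\le\langle\mathcal A(u),\varphi\rangle$ for $0\le\varphi\in X$ (hence $u>0$ a.e.), and uniqueness via strict monotonicity of $\mathcal A$.

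The real difference is how you pass from this one-sided inequality to the equation. The paper uses the elementary bound $|(s+t)_+^r-s^r|\le s^{r-1}|t|$ for $s>0$. The difference quotients of the reaction term at $u$ in any direction $\psi\in X$ are then dominated by $u^{r-1}|\psi|$. This is integrable by the one-sided inequality with $\varphi=|\psi|$, so dominated convergence gives a two-sided G\^ateaux derivative at $u$, and minimality makes it vanish. Your Sun--Wu--Long route uses test functions $(u+\varepsilon\psi)^+$ together with the fibering identity $\langle\mathcal A(u),u\rangle=\lambda\int_\Omega u^r\,dx$. It is longer here, but it needs only the inequality on the positive cone plus that identity. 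It therefore also works when $u$ minimizes only over a constraint set, such as a Nehari-type set, where arbitrary two-sided perturbations are not admissible.

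One step in your write-up must be repaired. With your convention $(u+\varepsilon\psi)^-\ge0$ and $\Phi=u+\varepsilon\psi+(u+\varepsilon\psi)^-$, drop the term $-\lambda\int_\Omega u^{r-1}(u+\varepsilon\psi)^-\,dx\le0$. You then get $0\le\varepsilon\langle\mathcal A(u)-\lambda u^{r-1},\psi\rangle+\langle\mathcal A(u),(u+\varepsilon\psi)^-\rangle$, with a plus sign, not a minus. Writing $\Omega_\varepsilon=\{u+\varepsilon\psi<0\}$,
\[
\langle\mathcal A(u),(u+\varepsilon\psi)^-\rangle
=-\sum_{i=1}^N\int_{\Omega_\varepsilon}a_i(|\partial_iu|^{p_i})|\partial_iu|^{p_i}\,dx
-\varepsilon\sum_{i=1}^N\int_{\Omega_\varepsilon}a_i(|\partial_iu|^{p_i})|\partial_iu|^{p_i-2}\partial_iu\,\partial_i\psi\,dx .
\]
Only the second sum is $o(\varepsilon)$, by $|\Omega_\varepsilon|\to0$ and absolute continuity. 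The first sum need not be $o(\varepsilon)$ a priori: it is of exact order $\varepsilon$ if $u$ vanishes linearly across a piece of hypersurface where $\psi<0$. The argument closes only because that sum is nonpositive and can be discarded.

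So your claim that the whole term is $o(\varepsilon)$ is not correct as stated. The bound $(u+\varepsilon\psi)^-\le\varepsilon|\psi|$ you cite controls values, not the derivatives that enter $\mathcal A$. With the minus sign you wrote, the nonnegative gradient term would sit where it cannot be dropped.

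A smaller point: for the positive test function, use the sum of bumps, or the first Dirichlet eigenfunction of the Laplacian as the paper does. Choosing $e$ with $\mathcal L_Ae=1$ would need a separate argument that $e>0$ a.e.
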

\begin{proof}
Let
$$
 \Psi(u)=\sum_{i=1}^N\frac1{p_i}\int_\Omega
 A_i(|\partial_i u|^{p_i})\,dx,
 \qquad
 \mathcal I_\lambda(u)=\Psi(u)-\frac\lambda r
 \int_\Omega (u^+)^r\,dx.
$$
Since $0<r<1<p_1\leq p_i$, \eqref{API}, H\"older's inequality and
$(a_1)$ show that $\mathcal I_\lambda$ is coercive. If $u_n\rightharpoonup u$
in $W_0^{1,\vec\gamma}(\Omega)$, then $u_n\to u$ in $L^1(\Omega)$ and
$|(s^+)^r-(t^+)^r|\leq|s-t|^r$; hence the reaction term is continuous
along the sequence. The convex principal part is weakly lower
semicontinuous, so $\mathcal I_\lambda$ has a global minimizer.
Since
\[
\mathcal I_\lambda(u^+)\leq\mathcal I_\lambda(u),
\]
we may choose a nonnegative global minimizer $u$.

For every fixed $0\ne v\geq0$ in $C_c^\infty(\Omega)$, by
\eqref{desig of item a1}, for $0<t\leq1$,
$$
\begin{aligned}
\mathcal I_\lambda(tv)
\leq
\sum_{i=1}^N\frac{\beta_i}{p_i}t^{p_i}
\|\partial_i v\|_{p_i}^{p_i}
+
\sum_{i=1}^N
H(\zeta_i)\frac{\theta_i}{q_i}t^{q_i}
\|\partial_i v\|_{q_i}^{q_i}
-\frac{\lambda}{r}t^r\|v\|_r^r
\leq
C(v)t^{p_1}
-\frac{\lambda}{r}t^r\|v\|_r^r,
\end{aligned}
$$
where
\[
C(v):=
\sum_{i=1}^N\frac{\beta_i}{p_i}
\|\partial_i v\|_{p_i}^{p_i}
+
\sum_{i=1}^N
H(\zeta_i)\frac{\theta_i}{q_i}
\|\partial_i v\|_{q_i}^{q_i}.
\]
Since $0<r<p_1$, we have
$\mathcal I_\lambda(tv)<0$ for all sufficiently small $t>0$.
Thus $u\ne0$.

For $0\leq\varphi\in W_0^{1,\vec\gamma}(\Omega)$ and $t>0$, minimality gives
\[
\frac\lambda r\int_\Omega\frac{(u+t\varphi)^r-u^r}{t}\,dx
\leq\frac{\Psi(u+t\varphi)-\Psi(u)}{t}.
\]
The integrand is nonnegative. Fatou's lemma and the differentiability of $\Psi$ (cf. \cite{Yijing}) yield, for every $0\leq\varphi\in W_0^{1,\vec\gamma}(\Omega)$,
\begin{equation}\label{singular-one-sided}
\lambda\int_\Omega u^{r-1}\varphi\,dx
\leq \langle\Psi'(u),\varphi\rangle<\infty.
\end{equation}
Choose a smooth $\varphi\in W_0^{1,\vec\gamma}(\Omega)$ that is positive throughout $\Omega$, for instance a positive first Dirichlet eigenfunction of the Laplacian. On $\{u=0\}$ the reaction difference quotient tends to $+\infty$ wherever $\varphi>0$. The finite right-hand side therefore implies $u>0$ a.e. in $\Omega$. Moreover, taking
$\varphi=|\psi|$ in \eqref{singular-one-sided}, we obtain
$$
\lambda\int_\Omega u^{r-1}|\psi|\,dx
\leq C\|\psi\|,
\qquad \forall\psi\in W_0^{1,\vec\gamma}(\Omega).
$$
Since $0<r<1$, the inequality
\(
|(s+t)_+^r-s^r|\leq s^{r-1}|t|,
~s>0,
\)
allows us to apply dominated convergence to the
directional derivatives of $\mathcal I_\lambda$.
Thus, by the minimality of $u$,
$$
\langle\Psi'(u),\psi\rangle
=\lambda\int_\Omega u^{r-1}\psi\,dx,
\qquad \forall\psi\in W_0^{1,\vec\gamma}(\Omega).
$$
Hence $u$ is a weak solution of \eqref{PLR}.

Finally, let $u_1,u_2$ be positive a.e. weak solutions. Their definition
ensures that $u_j^{r-1}|u_1-u_2|$ is integrable, since
$|u_1-u_2|\in W_0^{1,\vec\gamma}(\Omega)$. Subtract the weak equations
and test by $u_1-u_2$. Then
\[
\begin{aligned}
0\leq\langle\mathcal A(u_1)-\mathcal A(u_2),u_1-u_2\rangle=\lambda\int_\Omega(u_1^{r-1}-u_2^{r-1})(u_1-u_2)\,dx\leq0,
\end{aligned}
\]
because $t\mapsto t^{r-1}$ is decreasing. Strict monotonicity in
Proposition \ref{Splus} gives $u_1=u_2$. In particular, the conditional
comparison conclusion also holds, but is weaker than this uniqueness.
\end{proof}

\color{black}
\section{Concave-convex nonlinearities} 
This section extends variational existence results for concave--convex quasilinear equations, such as those in \cite{A2,AlvesHam08}, to the present anisotropic nonhomogeneous setting. We investigate conditions on the functions $f$ and $g$ under which the class of problems
\begin{equation}\label{problem Pfg}\tag{$P_{f,g}$}
\left\{ \begin{array}{ll}
      \mathcal{L}u=\lambda f(u)+g(u)& \mbox{in $\Omega$},\\
        u= 0 & \mbox{on $\partial\Omega$},\end{array} \right.
\end{equation}
has a nontrivial solution. Here the operator is
$$\mathcal{L}u: = -\displaystyle\sum_{i=1}^{N}\displaystyle\partial_i\left(a_i\left(\left|\displaystyle\partial_i u\right|^{p_i}\right)\left|\displaystyle\partial_i u\right|^{p_i- 2}\displaystyle\partial_i u\right), $$
where $\Omega$ is a bounded smooth domain in $\R^N$ and $\lambda$ is a positive parameter.

\subsection{{Small-parameter concave reactions}}
Throughout this subsection assume $\overline\gamma<N$, so $\gamma^*<\infty$. The functions $f$ and $g$ are continuous, and satisfy the following hypotheses:
\begin{itemize}
  \item[($f_1$)] There are positive constants $C_1$ and $C_2$, and $r\in (1,p_1)$ such that
$$C_1t^{r-1}\leq f(t)\leq C_2t^{r-1},\;\forall t>0;$$  

  \item[($g_1$)] There exists a positive constant $C_3$ such that $|g(t)|\leq C_3|t|^{\gamma^*-1}$, $\forall t \in \R$;
  \item[$(g_2)$] There exists $\zeta\in(q_N,\gamma^*]$ such that
\(
0<\zeta G(t)\leq tg(t),~ \forall t>0,
\)
where
\[
G(t)=\int_0^t g(s)\,ds.
\]
\end{itemize}

Since $r>1$ and $\gamma^*>1$, continuity and the growth bounds give $f(0)=g(0)=0$. Thus the positive-part reaction terms are continuously differentiable.
The energy functional $I_\lambda:W_0^{1,\overrightarrow{\gamma}}(\Omega)\to\R$ associated with the problem (\ref{problem Pfg}) is defined by
\begin{equation}\label{functional Alves Hamidi Teo 1}
I_\lambda(u):= \displaystyle\sum_{i=1}^{N}\frac{1}{p_i}\int_{\Omega}A_i\left(\left|\partial_i u\right|^{p_i}\right)dx - \lambda\int_\Omega F(u^+)dx-\int_\Omega G(u^+)dx,
\end{equation}
where
 $$u^+= \max\{u(x), 0\},\qquad A_i(t)=\!\!\displaystyle\int_{0}^{t}\!\!\!a_i(s)ds\qquad\mbox{and}\qquad F(t)=\!\!\!\dis\int_{0}^{t}\!\!\!f(s)ds.$$
To obtain the first solution, we use Ekeland's variational principle and construct a Palais--Smale sequence at a negative level. The following two lemmas establish the required local geometry.

\color{black}
\begin{lemma}\label{alveslema1}
Suppose $(a_1), (a_2), (f_1), (g_1)$ and $(g_2)$. There exist
$\lambda^*>0$ and $b,\rho>0$ such that, for every
$\lambda\in(0,\lambda^*)$, one has $I_\lambda(u)\geq b$ whenever
$\|u\|=\rho$.
\end{lemma}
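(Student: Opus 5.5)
We argue as in Lemma \ref{lema r less critical and p tilde consider the cases}, comparing the principal energy, measured by the full modular $\mathcal M(u)$, with the two reaction terms separately. By \eqref{desig of item a1},
\[
I_\lambda(u)\geq c_0\mathcal M(u)-\lambda\int_\Omega F(u^+)\,dx-\int_\Omega G(u^+)\,dx,\qquad c_0>0.
\]
Integrating $(f_1)$ gives $F(t)\leq (C_2/r)t^r$ for $t>0$. Integrating $(g_1)$ gives $G(t)\leq (C_3/\gamma^*)|t|^{\gamma^*}$. Hence
\[
I_\lambda(u)\geq c_0\mathcal M(u)-\lambda C\|u\|_r^r-C\|u\|_{\gamma^*}^{\gamma^*}.
\]
Hypothesis $(g_2)$ is not needed for this lemma; it only enters the mountain-pass part later.

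The critical term is handled exactly as in \eqref{local-troisi-modular}. Since $\overline\gamma<N$, Theorem \ref{anisotropic inequality}(i) together with $\|\partial_i u\|_{\gamma_i}\leq\mathcal M(u)^{1/\gamma_i}$ gives
\[
\|u\|_{\gamma^*}^{\gamma^*}\leq C\mathcal M(u)^{\gamma^*/\overline\gamma}.
\]
The exponent satisfies $\gamma^*/\overline\gamma=N/(N-\overline\gamma)>1$. For the concave term, $r<p_1$, so H\"older's inequality and \eqref{API} give $\|u\|_r^r\leq C\|\partial_1u\|_{p_1}^r\leq C\mathcal M(u)^{r/p_1}$. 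Altogether,
\[
I_\lambda(u)\geq \mathcal M(u)^{r/p_1}\Bigl(c_0\mathcal M(u)^{1-r/p_1}-C\mathcal M(u)^{\gamma^*/\overline\gamma-r/p_1}-\lambda C\Bigr).
\]

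Next we choose $\rho$ and then $\lambda^*$. We use the two-sided comparison \eqref{local-modular-norm-bounds}: for $0<\|u\|\leq1$,
\[
(\|u\|/L)^\kappa\leq\mathcal M(u)\leq L\|u\|^{p_1},\qquad \kappa=\gamma_+.
\]
Consider the function $s\mapsto c_0s^{1-r/p_1}-Cs^{\gamma^*/\overline\gamma-r/p_1}$ for $s>0$. Because $1-r/p_1<\gamma^*/\overline\gamma-r/p_1$, it is positive for small $s$. Fix $\rho\in(0,1]$ small enough that
\[
C\,s^{\gamma^*/\overline\gamma-1}\leq c_0/2\qquad\text{whenever } s\leq L\rho^{p_1}.
\]
On the sphere $\|u\|=\rho$, put $m_\rho=(\rho/L)^\kappa$, so that $\mathcal M(u)\in[m_\rho,L\rho^{p_1}]$. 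The bracket is then at least $(c_0/2)m_\rho^{1-r/p_1}-\lambda C$. Set
\[
\lambda^*=\frac{c_0\,m_\rho^{1-r/p_1}}{4C}.
\]
For $\lambda<\lambda^*$ the bracket is at least $(c_0/4)m_\rho^{1-r/p_1}$. It follows that $I_\lambda(u)\geq b:=(c_0/4)\,m_\rho$ on $\|u\|=\rho$, and $b$ does not depend on $\lambda$.

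The main point requiring care is that $\mathcal M$ is not homogeneous in $\|u\|$. A single power of the norm would not work; this is why we use the two-sided bounds \eqref{local-modular-norm-bounds} and keep $\rho$ independent of $\lambda$ before choosing $\lambda^*$. One must also check that the positive lower bound uses the smallest possible value of $\mathcal M$ on the sphere, while the absorption of the critical term uses the largest possible value. This works because each of the two factors involved is monotone in the appropriate direction.
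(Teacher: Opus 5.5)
Your proof is correct. Its key estimate differs from the paper's.

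\textbf{The paper's route.} The paper works with powers of the norm. For $\|u\|=\rho<1$ it bounds the modular from below by $\widetilde M_1\|u\|^{\gamma_+}$. It bounds the concave term by $\lambda M_2\|u\|^r$ and the critical term by $M_3\|u\|^{\gamma^*}$, the latter through the continuous embedding into $L^{\gamma^*}(\Omega)$. It then picks $\rho$ with $M_3\rho^{\gamma^*}\le\widetilde M_1\rho^{\gamma_+}/4$, and only afterwards $\lambda^*$. That choice of $\rho$ requires $\gamma_+<\gamma^*$. The paper obtains this, without saying so, from $(g_2)$: one has $\gamma_+\le q_N<\zeta\le\gamma^*$.

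\textbf{Your route.} You stay at the level of the modular $\mathcal M(u)$ and use the product form of Troisi's inequality. The critical term is then of order $\mathcal M(u)^{N/(N-\overline\gamma)}$, which is superlinear in $\mathcal M$ whatever the relative position of $\gamma_+$ and $\gamma^*$. The two-sided bounds \eqref{local-modular-norm-bounds} are needed only to pass to the sphere $\|u\|=\rho$.

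\textbf{Comparison.} The paper's argument is shorter. Yours is more robust, because it never needs $\gamma_+<\gamma^*$. It therefore does justify your remark that $(g_2)$ is superfluous for this lemma, whereas in the paper's own proof $(g_2)$ is genuinely, if implicitly, used.

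\textbf{Small correction.} In this subsection $(g_2)$ does not enter a mountain-pass argument. It is used in the subsequent theorem to obtain $K+o_n(1)\geq -\frac{C_2\lambda}{r}\|u_n^+\|_r^r$, which shows that the limit of the Ekeland sequence is nontrivial.
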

\begin{proof} Fix $0<\rho<1$ and let $\|u\|=\rho$.
By the inequality (\ref{desig of item a1}), $(f_1)$, $(g_1)$, $(g_2)$ and Sobolev embedding, we obtain
\begin{eqnarray*}
  I_\lambda(u) &\geq&
 \dis\sum_{i=1}^{N}\dis\frac{\alpha_i}{p_i}\left|\dis\partial_i u\right|_{p_i}^{p_i}\!\! +\dis\sum_{i=1}^{N}\left( \dis\frac{H(\zeta_i)\theta_i}{q_i}\left|\dis\partial_i u\right|_{q_i}^{q_i}\right)\!-\lambda\int_\Omega\!F(u^+)dx\!-\!\!\int_\Omega G(u^+)dx. \\
 &\geq& \dis\sum_{i=1}^{N}\dis\frac{\alpha_i}{p_i}\left|\dis\partial_i u\right|_{p_i}^{p_i}+\dis\sum_{i=1}^{N}\dis\frac{H(\zeta_i)\theta_i}{q_i}\left|\dis\partial_i u\right|_{q_i}^{q_i}- \dis\frac{\lambda C_2}{r}\dis\int_\Omega\! |u|^rdx\!-\!\dis\frac{C_3}{\gamma^{*}}\int_\Omega\!|u|^{\gamma^*}dx\\
 &\geq& M_1\left(\dis\sum_{i=1}^{N}\left|\dis\partial_i u\right|_{p_i}^{p_i} +\dis\sum_{i=1}^{N}\dis H(\zeta_i)\left|\dis\partial_i u\right|_{q_i}^{q_i}\right)-\dis\frac{\lambda C_2}{r}\dis\int_\Omega\! |u|^rdx\!-\!\dis\frac{C_3}{\gamma^{*}}\int_\Omega\!|u|^{\gamma^*}dx\\
 &\geq& \widetilde M_1\|u\|^{\gamma_+}
-\lambda M_2\|u\|^{r}-M_3\|u\|^{\gamma^*}.
\end{eqnarray*}
First choose $\rho$ so that
$M_3\rho^{\gamma^*}\leq\widetilde M_1\rho^{\gamma_+}/4$.
Then choose $\lambda^*>0$ so that
$\lambda^*M_2\rho^r\leq\widetilde M_1\rho^{\gamma_+}/4$.
It follows that $I_\lambda(u)\geq
\widetilde M_1\rho^{\gamma_+}/2=:b>0$ for
$0<\lambda<\lambda^*$.
\end{proof}

\color{black}

\color{black}
\begin{lemma}\label{alveslema2} 
The functional $I_\lambda$ is bounded from below on
$\overline{B}_\rho(0)=\{u\in W_0^{1,\overrightarrow{\gamma}}(\Omega): \|u\|\le\rho\}.$
Moreover,
$$K=\displaystyle\inf_{u\in B_\rho(0)}I_\lambda(u)<0, \forall\lambda\in(0,\lambda^*).$$
\end{lemma}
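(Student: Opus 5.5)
The plan is to prove the two assertions separately: a uniform lower bound on the closed ball, and then a test direction showing the infimum is negative.

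\emph{Boundedness below.} Take $u\in\overline B_\rho(0)$ with the $\rho<1$ fixed in Lemma~\ref{alveslema1}. The principal part is nonnegative, because $A_i\geq0$. For the reaction terms we use $(f_1)$ and $(g_1)$, which give $0\leq F(u^+)\leq \frac{C_2}{r}|u|^r$ and $|G(u^+)|\leq \frac{C_3}{\gamma^*}|u|^{\gamma^*}$. The continuous embeddings of Theorem~\ref{FraGaKow04} into $L^r(\Omega)$ and $L^{\gamma^*}(\Omega)$ hold since $r<p_1<\gamma^*$. Combining these, we obtain
\[
I_\lambda(u)\geq -\lambda M_2\rho^r-M_3\rho^{\gamma^*}>-\infty
\qquad\text{on }\overline B_\rho(0).
\]
This bound depends only on $\rho$ and on $\lambda\leq\lambda^*$.

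\emph{Negative infimum.} Fix a nonnegative $0\neq v\in C_c^\infty(\Omega)$ and let $t\in(0,1)$. By \eqref{desig of item a1},
\[
\sum_i\frac1{p_i}\int_\Omega A_i(t^{p_i}|\partial_i v|^{p_i})\,dx\leq C(v)\,t^{p_1},
\]
with $C(v)$ as in the singular-case proof; this uses $t^{p_i},t^{q_i}\leq t^{p_1}$ for $t\leq 1$. For the reaction terms, $(f_1)$ gives $F(tv)\geq \frac{C_1}{r}t^r v^r$. By $(g_2)$, $G\geq0$ for $t>0$, so the $G$-term only helps and may be discarded. Altogether,
\[
I_\lambda(tv)\leq C(v)t^{p_1}-\frac{\lambda C_1}{r}t^r\|v\|_r^r.
\]
Since $r<p_1$, the right-hand side is negative for every $t$ small enough, and this holds for each fixed $\lambda>0$. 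Shrinking $t$ further so that $\|tv\|=t\|v\|<\rho$ places $tv$ in $B_\rho(0)$. Hence $K\leq I_\lambda(tv)<0$ for all $\lambda\in(0,\lambda^*)$.

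The only delicate point is the sign of $G$ on $[0,\infty)$, which we take from $(g_2)$; the remaining steps are routine. No compactness is needed here, since compactness enters only later, in the Ekeland argument. Note also that $K$ is finite because of the first part, so $K\in(-\infty,0)$.
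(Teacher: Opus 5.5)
Your proof is correct and follows the paper's argument. The paper likewise gets the lower bound on $\overline{B}_\rho(0)$ from the growth conditions and the continuous embeddings, and shows $K<0$ by testing $I_\lambda(tv)$ with a nonnegative $v\in C_c^\infty(\Omega)$, using \eqref{desig of item a1}, $(f_1)$, $G\geq 0$ and $r<p_1$; you only spell out details the paper leaves implicit, such as $t^{p_i},t^{q_i}\leq t^{p_1}$ for $t\leq 1$.
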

\begin{proof}
The growth assumptions and continuous embeddings give a lower bound for $I_\lambda$ on $\overline{B}_\rho(0)$. Fix a nonzero nonnegative $v\in C_c^\infty(\Omega)$. By \eqref{desig of item a1}, $(f_1)$ and $G(v)\geq0$,
$$I_{\lambda}(tv)\leq \dis\sum_{i=1}^{N}\frac{\beta_it^{p_i}}{p_i}\left|\dis\partial_i v\right|_{p_i}^{p_i}+\dis\sum_{i=1}^{N}\dis H(\zeta_i)\frac{t^{q_i}\theta_i }{q_i}\left|\dis\partial_i v\right|_{q_i}^{q_i}\! -\dis\frac{\lambda C_1 t^r}{r}\dis\int_\Omega\!|v|^rdx.$$
Since $r<p_1<\gamma^*$, it follows from the last inequality that there exists $t_1>0$ small enough such that
$$
I_\lambda(t_1v)<0
\quad\mbox{and}\quad
t_1v\in B_\rho(0).
$$
Therefore, we have reached the conclusion of the lemma.
\end{proof}
Since $K<0$ and $I_\lambda(u)\geq b>0$ on $\partial B_\rho(0)$, we have
\[
K=\inf_{\overline B_\rho(0)}I_\lambda
=\inf_{B_\rho(0)}I_\lambda.
\]
Applying Ekeland's Variational Principle to the functional $I_\lambda$ on the metric space $(\overline{B}_\rho(0), d)$ with the induced metric $d(u,v)=\|u-v\|$, there exists $(u_n)_n\subset\overline{B}_\rho(0)$ such that
$$I_\lambda(u_n)\to K=\dis\inf_{u\in B_\rho(0)}I_\lambda(u)<0$$
and
\begin{equation}\label{Alves Hamidi 2.4}
I_\lambda(v)-I_\lambda(u_n)\geq -\frac{1}{n}\|v-u_n\|,\qquad v\in\overline B_\rho(0).
\end{equation}
Since $I_\lambda(u_n)\to K<0$ whereas $I_\lambda\geq b>0$ on the boundary sphere, $u_n$ lies in $B_\rho(0)$ for all sufficiently large $n$. For each $h$ in the working space, insert $v=u_n\pm th$ in \eqref{Alves Hamidi 2.4}, with $t>0$ small enough. Differentiability and $t\downarrow0$ give $|I_\lambda'(u_n)h|\leq\|h\|/n$. Thus
\begin{equation}\label{Alves Hamidi 2.5}
I'_\lambda(u_n) \to 0.
\end{equation}
From (\ref{Alves Hamidi 2.4}) and (\ref{Alves Hamidi 2.5}), we can conclude
$$I_\lambda(u_n)\to K\quad\mbox{and}\quad I_\lambda'(u_n)\to 0.$$
It follows that $(u_n)_n$ is a bounded $(PS)_{K}$ sequence for the functional $I_\lambda$. Passing to a subsequence, we have $u_n\rightharpoonup u_\lambda$ in $W_0^{1,\overrightarrow{\gamma}}(\Omega)$. 

Moreover, testing $I_\lambda'(u_n)$ with $-u_n^-$ and using
$I_\lambda'(u_n)\to0$, we obtain
\[
\sum_{i=1}^{N}\int_\Omega
a_i\left(|\partial_i u_n^-|^{p_i}\right)
|\partial_i u_n^-|^{p_i}\,dx=o_n(1).
\]
Hence, by $(a_1)$,
\(
u_n^-\to0
~\mbox{in }W_0^{1,\vec{\gamma}}(\Omega).
\)
Consequently, the weak limit satisfies $u_\lambda\geq0$ a.e. in $\Omega$.

\begin{theorem}
Suppose $(a_1), (a_2), (a_3), (f_1), (g_1)$ and $(g_2)$. Then, for every
$\lambda\in(0,\lambda^*)$, problem \eqref{problem Pfg} has a nontrivial
nonnegative weak solution $u_\lambda$.
\end{theorem}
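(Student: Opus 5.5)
We keep the bounded $(PS)_K$ sequence $(u_n)\subset \overline B_\rho(0)$ built above by Ekeland's principle, with $I_\lambda(u_n)\to K<0$, $I_\lambda'(u_n)\to0$, $u_n\rightharpoonup u_\lambda$ and $u_n^-\to0$. The plan has three steps: show that $u_\lambda$ is a critical point of $I_\lambda$, identify $u_\lambda$ as nonnegative, and rule out $u_\lambda=0$.

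\textbf{Step 1: $u_\lambda$ is a critical point.} By the compact embedding of Theorem \ref{FraGaKow04} (here $r<p_1<\gamma^*$), we get $u_n\to u_\lambda$ in $L^s(\Omega)$ for every $s<\gamma^*$ and a.e. in $\Omega$. Then $(f_1)$ gives
\[
\int_\Omega f(u_n^+)\varphi\,dx\to\int_\Omega f(u_\lambda^+)\varphi\,dx .
\]
For the critical term, $(g_1)$ shows that $g(u_n^+)$ is bounded in $L^{(\gamma^*)'}(\Omega)$. Together with a.e. convergence, this gives $g(u_n^+)\rightharpoonup g(u_\lambda^+)$ weakly in $L^{(\gamma^*)'}(\Omega)$. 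For the principal part, the fluxes
\[
a_i(|\partial_i u_n|^{p_i})|\partial_i u_n|^{p_i-2}\partial_i u_n
\]
are bounded in $L^{\gamma_i'}(\Omega)$ by $(a_1)$ and converge weakly to some $\chi_i$. To identify $\chi_i$, a Minty/monotonicity argument is the natural tool, but it needs $\partial_i u_n\to\partial_i u_\lambda$ a.e. I would obtain this a.e. convergence of the gradients by a Boccardo--Murat type truncation: test $I_\lambda'(u_n)$ with $T_k(u_n-u_\lambda)$. On $\{|u_n-u_\lambda|<k\}$, the reaction contributes at most $Ck$ uniformly in $n$, thanks to the $L^1$ bounds of $f(u_n^+)$ and $g(u_n^+)$. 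The strict monotonicity estimate \eqref{bi-monotonicity} then yields the a.e. convergence of $\partial_i u_n$ after letting $k\to0$. Passing to the limit in $I_\lambda'(u_n)\varphi\to0$ gives $I_\lambda'(u_\lambda)=0$, so $u_\lambda$ is a weak solution.

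\textbf{Step 2: nonnegativity.} Since $u_n^-\to0$ in $W_0^{1,\vec\gamma}(\Omega)$, the limit satisfies $u_\lambda\geq0$. Hence $u_\lambda^+=u_\lambda$ and $u_\lambda$ solves \eqref{problem Pfg}.

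\textbf{Step 3: nontriviality (the main obstacle).} Weak convergence alone does not exclude $u_\lambda=0$, because the critical term may concentrate. The key is the sign of the level. Write
\[
I_\lambda(u_n)-\tfrac1\zeta I_\lambda'(u_n)u_n .
\]
By \eqref{desi-f}, which uses $(a_3)$, we have $a_i(s)s<\frac{q_i}{p_i}A_i(s)$, so each principal term satisfies
\[
\frac1{p_i}A_i(s)-\frac1\zeta a_i(s)s\geq\Bigl(\frac1{p_i}-\frac{q_i}{p_i\zeta}\Bigr)A_i(s)\geq0,
\]
because $\zeta>q_N\geq q_i$. By $(g_2)$, the critical part $\int_\Omega\bigl(\tfrac1\zeta g(u_n^+)u_n^+-G(u_n^+)\bigr)dx$ is nonnegative. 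Only the concave part remains, and by $(f_1)$ it is bounded by $C\lambda\int_\Omega|u_n|^r\,dx$. Therefore
\[
K+o(1)=I_\lambda(u_n)-\tfrac1\zeta I_\lambda'(u_n)u_n\geq -C\lambda\int_\Omega|u_n|^r\,dx .
\]
If $u_\lambda=0$, then $\int_\Omega|u_n|^r\,dx\to0$ by compactness, and letting $n\to\infty$ gives $K\geq0$. This contradicts $K<0$ from Lemma \ref{alveslema2}. Hence $u_\lambda\neq0$.

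The only delicate point is the a.e. convergence of the gradients in the critical setting. If the truncation argument turns out to be awkward, I would fall back on Lions' concentration--compactness, using the smallness of $\rho$: the bound $\|u_n\|\leq\rho$ keeps the total mass below the concentration threshold, which forces strong $L^{\gamma^*}$ convergence. Proposition \ref{Splus} then upgrades this to strong convergence in $W_0^{1,\vec\gamma}(\Omega)$.
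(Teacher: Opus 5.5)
Your proposal is correct and follows essentially the same route as the paper. Nontriviality comes from $K+o_n(1)=I_\lambda(u_n)-\frac1\zeta I_\lambda'(u_n)u_n\geq -C\lambda\|u_n\|_r^r$ (via \eqref{desi-f}, $(g_2)$ and $(f_1)$) together with $K<0$, and the limit equation comes from a.e.\ gradient convergence obtained by testing with $T_\varepsilon(u_n-u_\lambda)$, followed by weak convergence of the fluxes in $L^{\gamma_i'}(\Omega)$ and of $g(u_n^+)$ in $L^{(\gamma^*)'}(\Omega)$. The one step you leave implicit, and which the paper states, is that $T_\varepsilon(u_n-u_\lambda)\rightharpoonup0$ in $W_0^{1,\vec\gamma}(\Omega)$, so that $\langle\mathcal A(u_\lambda),T_\varepsilon(u_n-u_\lambda)\rangle=o_n(1)$ can be subtracted; your concentration--compactness fallback is not needed.
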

\begin{proof}
Consider the positive constant $\zeta$ according to the property $(g_2)$. It follows
$$I_\lambda(u_n)-\frac{1}{\zeta}I_\lambda'(u_n)u_n= K+o_n(1).$$

Since the functional is defined in terms of $u^+$, we have
\begin{eqnarray}\label{des-1}
\displaystyle\sum_{i=1}^{N}\frac{1}{p_i}
\int_{\Omega}A_i\left(
\left|\dis\partial_i u_n\right|^{p_i}
\right)dx
&-&
\lambda\int_\Omega F(u_n^+)\,dx
-\int_\Omega G(u_n^+)\,dx
\nonumber\\
&-&
\dis\frac{1}{\zeta}\dis\sum_{i=1}^{N}
\int_\Omega
a_i\left(
\left|\dis\partial_i u_n\right|^{p_i}
\right)
\left|\dis\partial_i u_n\right|^{p_i}\,dx
\nonumber\\
&+&
\dis\frac{\lambda}{\zeta}
\int_\Omega f(u_n^+)u_n^+\,dx
+
\dis\frac{1}{\zeta}
\int_\Omega g(u_n^+)u_n^+\,dx
\nonumber\\
&=&K+o_n(1).
\end{eqnarray}

By \eqref{desi-f} we obtain
$$a_i(t^{p_i})t^{p_i}\leq\dfrac{q_i}{p_i}A_i(t^{p_i}),\qquad t\geq0.$$
From this and \eqref{des-1} we deduce that

\[
\begin{aligned}
K+o_n(1)&\geq
\sum_{i=1}^{N}
\left(
\frac{1}{p_i}-\frac{q_i}{\zeta p_i}
\right)
\int_{\Omega}
A_i\left(
\left|
\partial_i u_n
\right|^{p_i}
\right)dx
-\lambda\frac{C_2}{r}
\int_{\Omega}(u_n^+)^{r}dx
\\
&+\frac{1}{\zeta}
\int_{\Omega}
\left(
u_n^+g(u_n^+)-\zeta G(u_n^+)
\right)dx .
\end{aligned}
\]

By the choice of $\zeta$ and assumption $(g_2)$, we obtain

\begin{equation}\label{Alves Hamidi 2.7}
K+o_n(1)\geq
-\frac{C_2\lambda}{r}\|u_n^+\|_r^r,
\qquad n\in\N.
\end{equation}

Since
\(
u_n\to u_\lambda
~\mbox{in }L^r(\Omega)
\)
and $u_\lambda\geq0$, we also have
\(
u_n^+\to u_\lambda
~\mbox{in }L^r(\Omega).
\)
Passing to the limit in \eqref{Alves Hamidi 2.7}, we obtain

$$\dis\frac{-C_2\lambda}{r}\|u_\lambda\|_r^r\leq K<0.$$
Thus, $u_\lambda\neq 0$.

We give the truncation argument explicitly, in the spirit of
\cite{HamRako05}. Set $w_n=u_n-u_\lambda$ and
$b_i(t)=a_i(|t|^{p_i})|t|^{p_i-2}t$. For $\varepsilon>0$, let
$T_\varepsilon(t)=\max\{-\varepsilon,\min\{t,\varepsilon\}\}$.
The sequence $T_\varepsilon(w_n)$ is bounded in the working space and
tends to zero in $L^1(\Omega)$, so it converges weakly to zero there.
Furthermore,
\[
\sup_n\|\lambda f(u_n^+)+g(u_n^+)\|_1<\infty
\]
by $(f_1)$, $(g_1)$ and boundedness in $L^{\gamma^*}(\Omega)$.
Testing the approximate variational equation with $T_\varepsilon(w_n)$
and subtracting $\langle\mathcal A(u_\lambda),T_\varepsilon(w_n)\rangle=o_n(1)$
gives
\[
\int_{\{|w_n|<\varepsilon\}}D_n\,dx\leq C\varepsilon+o_n(1),
\quad
D_n=\sum_i\bigl[b_i(\partial_i u_n)-b_i(\partial_i u_\lambda)\bigr]
(\partial_i u_n-\partial_i u_\lambda)\geq0.
\]
The error $o_n(1)$ is valid because these test functions have uniformly
bounded norms. Fix $\tau>0$ and $R>\tau$. Continuity and strict
monotonicity of the fluxes imply that
\[
\sum_i[b_i(\xi_i)-b_i(\eta_i)](\xi_i-\eta_i)\geq d_{R,\tau}>0
\]
on the compact set $|\xi|,|\eta|\leq R$, $|\xi-\eta|\geq\tau$.
The measure of the sets where $|\nabla u_n|>R$ or
$|\nabla u_\lambda|>R$ is at most $CR^{-\gamma_-}$, by the bounded
directional norms. Consequently,
\[
\begin{aligned}
|\{|\nabla u_n-\nabla u_\lambda|\geq\tau\}|
&\leq |\{|w_n|\geq\varepsilon\}|
+\frac{C\varepsilon+o_n(1)}{d_{R,\tau}}+CR^{-\gamma_-}.
\end{aligned}
\]
First let $n\to\infty$, then $\varepsilon\downarrow0$, and finally
$R\to\infty$. Since $w_n\to0$ in measure, the gradients converge in
measure. Extracting a subsequence, we obtain
\begin{equation}\label{Convergence of gradients}
\nabla u_n(x)\to\nabla u_\lambda(x)\quad\mbox{a.e. in }\Omega.
\end{equation}
No Euler equation for $u_\lambda$ has been assumed in this argument.

For each $i$, by $(a_1)$,
\[
\begin{aligned}
\left|
a_i(|\partial_i u_n|^{p_i})
|\partial_i u_n|^{p_i-2}\partial_i u_n
\right|
\leq
\beta_i|\partial_i u_n|^{p_i-1}
+
H(\zeta_i)\theta_i|\partial_i u_n|^{q_i-1}.
\end{aligned}
\]
Hence, since $(u_n)$ is bounded in
$W_0^{1,\vec{\gamma}}(\Omega)$, the corresponding fluxes are bounded in
$L^{\gamma_i'}(\Omega)$. Moreover, \eqref{Convergence of gradients} and
the continuity of $a_i$ give their pointwise convergence. Therefore,
\[
a_i(|\partial_i u_n|^{p_i})
|\partial_i u_n|^{p_i-2}\partial_i u_n
\rightharpoonup
a_i(|\partial_i u_\lambda|^{p_i})
|\partial_i u_\lambda|^{p_i-2}\partial_i u_\lambda
\]
in $L^{\gamma_i'}(\Omega)$ for every $i$.

Since $r<\gamma^*$, the compact embedding gives
\[
u_n^+\to u_\lambda
\quad\mbox{in }L^r(\Omega),
\]
and consequently
\[
f(u_n^+)\to f(u_\lambda)
\quad\mbox{in }L^{r'}(\Omega).
\]
Furthermore, $(u_n^+)$ is bounded in $L^{\gamma^*}(\Omega)$ and
$u_n^+\to u_\lambda$ a.e. in $\Omega$. Thus, by $(g_1)$,
$(g(u_n^+))$ is bounded in $L^{(\gamma^*)'}(\Omega)$ and
\[
g(u_n^+)\rightharpoonup g(u_\lambda)
\quad\mbox{in }L^{(\gamma^*)'}(\Omega).
\]

Passing to the limit in $I_\lambda'(u_n)\to0$, we obtain
\[
I_\lambda'(u_\lambda)\varphi=0,
\qquad
\forall\varphi\in W_0^{1,\vec{\gamma}}(\Omega).
\]
Consequently, $u_\lambda$ is a nontrivial nonnegative weak solution.

\end{proof}
\subsection{{Large-parameter superlinear reactions with critical growth}}
Here $(f_2)$--$(f_4)$ replace $(f_1)$ and are assumed simultaneously
throughout this subsection. We again assume $\overline\gamma<N$ and
continuity of $f$ on $[0,\infty)$. Define $\kappa=\gamma_+$ and consider
the following assumptions:
\begin{itemize}
 \item[$(f_2)$]  There are constants $\theta\in (q_N,\gamma^*)$ and $t_0>0$ such that $0<\theta F(t)\leq f(t)t,\;\forall t\geq t_0$.
\item[$(f_3)$] There exists $r\in(2, \gamma^*)$ such that
  $$\limsup_{t\to+\infty}\dis\frac{f(t)}{t^{r-1}}<+\infty$$
 \item[$(f_4)$] With the same $\theta$ and $t_0$ as in $(f_2)$,
 \[
 0<\theta F(t)\leq t f(t)\quad\mbox{for }0<t<t_0,
 \qquad
 \lim_{t\to0^+}\frac{f(t)}{t^{\kappa-1}}=0.
 \]
\end{itemize}

In particular, continuity and $(f_4)$ give $f(0)=0$, so the positive-part energy is $C^1$. The second part of $(f_4)$ supplies the required control at the origin, while its first part extends the Ambrosetti--Rabinowitz inequality in $(f_2)$ to the interval $(0,t_0)$.

\color{black}
We now take $g(t)=|t|^{\gamma^*-2}t$ and consider
\[
 I_\lambda(u)=\sum_{i=1}^N\frac1{p_i}\int_\Omega
 A_i(|\partial_i u|^{p_i})\,dx
 -\lambda\int_\Omega F(u^+)\,dx
 -\frac1{\gamma^*}\int_\Omega(u^+)^{\gamma^*}\,dx.
\]

\begin{theorem}\label{critical-concave-convex}
Assume that $\overline{\gamma}<N$, $(a_1)$--$(a_3)$ and
$(f_2)$--$(f_4)$ hold. Then there exists
$\lambda^*>0$ such that problem \eqref{problem Pfg}, with
$g(t)=|t|^{\gamma^*-2}t$, has a nontrivial nonnegative weak solution for
every $\lambda\geq\lambda^*$.
\end{theorem}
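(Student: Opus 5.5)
We plan to apply the mountain-pass theorem to $I_\lambda$ and to use the large parameter to push the minimax level below the compactness threshold set by the anisotropic critical Sobolev constant.

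\emph{Step 1: mountain-pass geometry.} By \eqref{energy-growth}, the upper bound $F(t)\leq \varepsilon t^\kappa + C_\varepsilon t^{r}$ from $(f_3)$ and $(f_4)$, and Theorem~\ref{anisotropic inequality}, we obtain for $\|u\|=\rho$ small
\[
I_\lambda(u)\geq c\mathcal M(u)-\lambda\varepsilon C\|u\|^{\kappa}-\lambda C_\varepsilon\|u\|^{r}-C\|u\|^{\gamma^*}.
\]
We then argue as in Lemma~\ref{lema r less critical and p tilde consider the cases}, using $\kappa=\gamma_+$, $r>2$ and $\gamma^*>\gamma_+$. Because $\lambda$ is large here, the radius $\rho_\lambda$ and the barrier $b_\lambda>0$ will depend on $\lambda$; this is harmless. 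We fix $0\leq v\in C_c^\infty(\Omega)$, $v\neq0$. Since $(f_2)$ and $(f_4)$ together give $F(t)\geq ct^\theta$ for $t\geq t_0$, and $\theta>q_N\geq\gamma_+$, we have $I_\lambda(tv)\to-\infty$ as $t\to\infty$. Hence the level
\[
c_\lambda=\inf_{g\in\Gamma}\max_{[0,1]} I_\lambda\circ g
\]
is well defined and positive.

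\emph{Step 2: bounded (PS) sequences and the level estimate.} Let $(u_n)$ be a $(PS)_{c_\lambda}$ sequence. We compute $I_\lambda(u_n)-\theta^{-1}I_\lambda'(u_n)u_n$. By \eqref{desi-f} we have $a_i(s)s\leq (q_i/p_i)A_i(s)$. By $(f_2)$ and $(f_4)$ we have $\theta F\leq tf$ on $(0,\infty)$. Finally $\theta<\gamma^*$. Together these give
\[
c_\lambda+o(1)+o(1)\|u_n\|\geq \Bigl(1-\tfrac{q_N}{\theta}\Bigr)\sum_i\tfrac1{p_i}\int A_i\geq c\,\mathcal M(u_n).
\]
This shows $(u_n)$ is bounded, and also $\mathcal M(u_n)\lesssim c_\lambda$. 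Next we prove $c_\lambda\to0$ as $\lambda\to\infty$. Since $c_\lambda\leq\max_{t\geq0}I_\lambda(tv)$, we let $t_\lambda$ be the maximizer. The identity $\frac{d}{dt}I_\lambda(tv)=0$, combined with $F(t)\geq ct^\theta$, forces $t_\lambda\to0$. Then
\[
\max_{t\geq 0} I_\lambda(tv)\leq C\bigl(t_\lambda^{p_1}+t_\lambda^{\gamma_+}\bigr)\to0 .
\]
Consequently $\mathcal M(u_n)$ is uniformly small for $\lambda$ large.

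\emph{Step 3: compactness below threshold (main obstacle).} Passing to a subsequence, $u_n\rightharpoonup u$ weakly, strongly in $L^r$ and in $L^\kappa$, and a.e. The truncation argument from the previous theorem again yields $\nabla u_n\to\nabla u$ a.e., so $u$ is a critical point. The difficulty is to rule out $u=0$ and, more generally, the loss of mass in the critical term. We intend to use the Troisi product inequality in the form \eqref{local-troisi-modular}:
\[
\|w\|_{\gamma^*}\leq C_T\mathcal M(w)^{1/\overline\gamma}.
\]
Put $w_n=u_n-u$ and apply Brezis--Lieb to the modular and to $\|\cdot\|_{\gamma^*}^{\gamma^*}$. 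From $I'_\lambda(u_n)u_n-I_\lambda'(u)u\to0$ and the lower bound $\int a_i(s)s\geq\alpha_i\|\cdot\|^{p_i}$ we get
\[
c\,\mathcal M(w_n)\leq \|w_n^+\|_{\gamma^*}^{\gamma^*}+o(1)\leq C\mathcal M(w_n)^{\gamma^*/\overline\gamma}+o(1).
\]
Here $\gamma^*/\overline\gamma>1$. Because $\mathcal M(w_n)$ is small by Step 2, this forces $\mathcal M(w_n)\to0$. A cleaner route is to test with $u_n-u$ directly: $\limsup\langle\mathcal A(u_n),u_n-u\rangle\leq\limsup\|w_n\|_{\gamma^*}^{\gamma^*}$, which is $o(1)$ by the argument above, and then Proposition~\ref{Splus} gives strong convergence. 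The nonlinear Brezis--Lieb splitting for the modular with the $a_i$ is the delicate point. If it proves troublesome, we will replace the splitting by the smallness argument applied to $u_n$ itself together with the concentration--compactness measures $|\partial_iu_n|^{\gamma_i}\rightharpoonup\mu_i$ and $|u_n|^{\gamma^*}\rightharpoonup\nu$, in the manner of \cite{AlvesHam08}. At an atom $x_j$ the Troisi inequality would give $\nu_j\geq S\prod\mu_{i,j}^{\gamma^*/(N\gamma_i)}$, which is incompatible with the uniformly small total mass. Hence $\nu$ has no atoms and $u_n^+\to u^+$ in $L^{\gamma^*}$.

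\emph{Step 4: conclusion.} Strong convergence gives $I_\lambda(u)=c_\lambda>0$, so $u\neq0$. Testing with $-u^-$, as was done before, gives $u\geq0$. Therefore $u$ is a nontrivial nonnegative weak solution for every $\lambda\geq\lambda^*$.
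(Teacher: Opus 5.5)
Your proof is correct. At the compactness step it takes a genuinely different route from the paper.

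\textbf{Comparison.} The paper proves $(PS)_c$ below a $\lambda$-independent level $c_*=(1/\theta-1/\gamma^*)K^{-N}$ by full concentration--compactness. It extracts defect measures $\mu_i,\nu$ and applies the localized Troisi inequality $\nu_j^{1/\gamma^*}\le C_T\prod_i\mu_{i,j}^{1/(N\gamma_i)}$ at each atom. It then tests with anisotropic cutoffs scaled by $\varepsilon^{\beta_i^c}$, including at boundary atoms, to get the balance $\sum_i d_i\mu_{i,j}\le\nu_j$, hence $\nu_j\ge\delta$. Only afterwards does it show $c_\lambda\to0$.

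You instead use the smallness of the level inside the compactness argument. The Ambrosetti--Rabinowitz computation gives $\limsup_n\mathcal M(u_n)\le Cc_\lambda$ with $C$ independent of $\lambda$. The truncation argument still applies, since $\sup_n\|\lambda f(u_n^+)+(u_n^+)^{\gamma^*-1}\|_1<\infty$. Brezis--Lieb then gives the global inequality $c\,\mathcal M(w_n)\le\|w_n\|_{\gamma^*}^{\gamma^*}+o(1)\le C_T^{\gamma^*}\mathcal M(w_n)^{\gamma^*/\overline\gamma}+o(1)$. Because $\gamma^*/\overline\gamma-1=\gamma^*/N>0$, every subsequential limit of $\mathcal M(w_n)$ is either $0$ or at least some $m_0>0$ independent of $\lambda$. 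This is the global Brezis--Nirenberg mechanism: it uses no measures or localization and also gives $(PS)_c$ below a fixed level. The paper's route, in exchange, describes the threshold through atom masses and needs neither a.e. gradient convergence nor a nonlinear splitting of the modular, since it closes with $(S_+)$.

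\textbf{Points to state explicitly.}
\begin{enumerate}
\item The splitting of $\sum_i\int_\Omega a_i(|\partial_iu_n|^{p_i})|\partial_iu_n|^{p_i}\,dx$ is where $(a_3)$ enters. Set $\Phi_i(t)=a_i(|t|^{p_i})|t|^{p_i}$. It is nondecreasing in $|t|$ by $(a_2)$, and $t\Phi_i'(t)<q_i\Phi_i(t)$ for $t>0$. Hence $\Phi_i(2t)\le 2^{q_i}\Phi_i(t)$. The mean value theorem then gives $|\Phi_i(a+b)-\Phi_i(a)|\le\varepsilon\Phi_i(a)+C_\varepsilon\Phi_i(b)$, which is the hypothesis of the general Brezis--Lieb lemma.
\item Smallness of $\mathcal M(u_n)$ passes to $\mathcal M(w_n)$ through $\mathcal M(u)\le\liminf_n\mathcal M(u_n)$ and $\mathcal M(a+b)\le 2^{\gamma_+-1}(\mathcal M(a)+\mathcal M(b))$.
\end{enumerate}

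\textbf{Minor corrections.}
\begin{enumerate}
\item In Step 1 the fact you need is $r>\kappa$, not $r>2$. The paper derives $r\ge\theta>q_N\ge\kappa$; alternatively you may enlarge $r$ in $(f_3)$.
\item For $t_\lambda\to0$, the relevant input is $sf(s)\ge\theta F(s)>0$ for every $s>0$. This keeps $\int_\Omega f(t_\lambda v)t_\lambda v\,dx$ bounded away from zero while $t_\lambda$ stays away from zero. The bound $F(t)\ge ct^\theta$ holds only for large $t$ and does not do this.
\item Your fallback states the atom inequality in the wrong direction. Troisi bounds $\nu_j$ from above, and a lower bound needs the local balance obtained from cutoff tests. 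Your main route does not use the fallback, so this does not affect the proof.
\end{enumerate}
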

\begin{proof}
We divide the proof into three steps.

\emph{Step 1: mountain-pass geometry.}
Conditions $(f_3)$ and $(f_4)$ imply that, for every $\varepsilon>0$, there
is $C_\varepsilon>0$ such that
\[
 |f(t)|\leq\varepsilon |t|^{\kappa-1}
 +C_\varepsilon |t|^{r-1},\qquad t\geq0.
\]

Notice that $(f_2)$ and $(f_4)$ yield
\(
0<\theta F(t)\leq tf(t),~t>0.
\)
Hence,
\[
\frac{d}{dt}\left(\frac{F(t)}{t^\theta}\right)
=
\frac{tf(t)-\theta F(t)}{t^{\theta+1}}
\geq0,
\]
and therefore, for some $C>0$,
\(
F(t)\geq Ct^\theta
\)
for all sufficiently large $t$. On the other hand, $(f_3)$ implies
$f(t)\leq C_1t^{r-1}$ for all sufficiently large $t$, and consequently
$F(t)\leq C_2t^r$. Thus $\theta\leq r$. Moreover, since
$\gamma_i\in\{p_i,q_i\}$ and $p_i<q_i\leq q_N$, we have
$\kappa\leq q_N$. Consequently,
\[
r\geq\theta>q_N\geq\kappa.
\]

By $(a_1)$ and the Sobolev
embeddings, for $\|u\|\leq1$,
\[
 I_\lambda(u)\geq c_0\|u\|^\kappa
 -\lambda\varepsilon C_1\|u\|^\kappa
 -\lambda C_\varepsilon C_2\|u\|^r
 -C_3\|u\|^{\gamma^*}.
\]
For fixed $\lambda>0$, first choose $\varepsilon$ and then $\rho>0$ so
that $I_\lambda(u)\geq b>0$ on $\|u\|=\rho$. On the other hand, if
$0\ne\varphi\in C_c^\infty(\Omega)$ is nonnegative, $(a_1)$ and
$\gamma^*>\kappa$ give $I_\lambda(t\varphi)\to-\infty$ as
$t\to\infty$. Hence $I_\lambda$ has the mountain-pass geometry. Denote its
minimax level by $c_\lambda>0$ and let $(u_n)$ be a corresponding
Palais--Smale sequence.

\emph{Step 2: a compactness level independent of $\lambda$.}
For brevity, put
\[
 \mathcal A_i(u)=\frac1{p_i}\int_\Omega
 A_i(|\partial_i u|^{p_i})\,dx,
 \quad
 \mathcal B_i(u)=\int_\Omega
 a_i(|\partial_i u|^{p_i})|\partial_i u|^{p_i}\,dx.
\]
By \eqref{desi-f},
$\mathcal B_i(u)\leq q_i\mathcal A_i(u)
\leq q_N\mathcal A_i(u)\leq\theta\mathcal A_i(u)$. The outer inequalities are strict when the corresponding energy is positive.
Since $(f_2)$ and $(f_4)$ give
$0<\theta F(t)\leq tf(t)$ for every $t>0$, we obtain
\begin{align*}
 I_\lambda(u_n)-\frac1\theta I_\lambda'(u_n)u_n
 &\geq \sum_{i=1}^N\left(1-\frac{q_i}{\theta}\right)
 \mathcal A_i(u_n)
 +\left(\frac1\theta-\frac1{\gamma^*}\right)
 \|u_n^+\|_{\gamma^*}^{\gamma^*}.
\end{align*}

Since $(u_n)$ is a Palais--Smale sequence at level $c=c_\lambda$, we have
\[
I_\lambda(u_n)=c+o_n(1)
\quad\mbox{and}\quad
I_\lambda'(u_n)u_n=o_n(1)\|u_n\|.
\]
Consequently,
\[
c+o_n(1)+o_n(1)\|u_n\|
\geq
\sum_{i=1}^N\left(1-\frac{q_i}{\theta}\right)
\mathcal A_i(u_n).
\]
Since $\theta>q_N$, the right-hand side is bounded below by $c\|u_n\|^{p_1}-C$ for $\|u_n\|\geq1$, with $c>0$. Since $p_1>1$, it dominates the term $o_n(1)\|u_n\|$ on the left. Therefore $(u_n)$ is bounded in
$W_0^{1,\vec{\gamma}}(\Omega)$.

Moreover, testing $I_\lambda'(u_n)$ with $-u_n^-$, we obtain
\[
\sum_{i=1}^N\int_\Omega
a_i(|\partial_i u_n^-|^{p_i})
|\partial_i u_n^-|^{p_i}\,dx=o_n(1).
\]
Thus, by $(a_1)$,
\(
u_n^-\to0
\quad\mbox{in }W_0^{1,\vec{\gamma}}(\Omega).
\)

We now justify the localized concentration argument. Since
$\gamma_+=\kappa\leq q_N<\theta<\gamma^*$, after extraction
$u_n\rightharpoonup u$ in the working space and $u_n\to u$ a.e. and in
every $L^s(\Omega)$, $1\leq s<\gamma^*$. The negative-part estimate
gives $u\geq0$. Extend $u_n$ and $u$ by zero to $\R^N$ and extract the
finite measures
\[
|\partial_i u_n|^{\gamma_i}\,dx\stackrel{*}{\rightharpoonup}\mu_i,
\qquad (u_n^+)^{\gamma^*}\,dx\stackrel{*}{\rightharpoonup}\nu,
\]
whose supports lie in $\overline\Omega$. There is a finite or countable
set of points $x_j\in\overline\Omega$ such that
\begin{equation}\label{critical-measures}
\nu=u^{\gamma^*}\,dx+\sum_j\nu_j\delta_{x_j},
\qquad
\nu_j^{1/\gamma^*}\leq C_T
\prod_{i=1}^N\mu_{i,j}^{1/(N\gamma_i)},
\quad\mu_{i,j}:=\mu_i(\{x_j\}).
\end{equation}
Here $C_T$ is the Troisi constant for $\vec\gamma$. For completeness,
set $w_n=u_n-u$ and extract the defect measures
$|w_n|^{\gamma^*}\,dx\stackrel{*}{\rightharpoonup}\widehat\nu$ and
$|\partial_i w_n|^{\gamma_i}\,dx\stackrel{*}{\rightharpoonup}\sigma_i$.
The Brezis--Lieb identity and $u_n^-\to0$ give
$\nu=u^{\gamma^*}\,dx+\widehat\nu$. Applying Troisi to $\phi w_n$,
with $\phi\in C_c^\infty(\R^N)$, and using $w_n\to0$ in
$L^{\gamma_i}(\Omega)$, yields
\[
\left(\int|\phi|^{\gamma^*}\,d\widehat\nu\right)^{1/\gamma^*}
\leq C_T\prod_i
\left(\int|\phi|^{\gamma_i}\,d\sigma_i\right)^{1/(N\gamma_i)}.
\]
Approximation of indicators with respect to these finite measures gives,
for every Borel set $E$ and $\sigma=\sum_i\sigma_i$,
\[
\widehat\nu(E)^{1/\gamma^*}\leq C_T\sigma(E)^{1/\overline\gamma}.
\]
On the nonatomic part of $\sigma$, partition into sets of $\sigma$-mass
at most $\varepsilon$. Summing the preceding bound gives a total
$\widehat\nu$-mass at most
$C_T^{\gamma^*}\varepsilon^{\gamma^*/\overline\gamma-1}\sigma(\R^N)$,
which tends to zero. Thus $\widehat\nu$ is supported on the countable
atoms of $\sigma$. Localized Minkowski inequalities, followed by
shrinking neighborhoods and absolute continuity of
$|\partial_i u|^{\gamma_i}\,dx$, show that
$\sigma_i(\{x_j\})=\mu_i(\{x_j\})$. Localizing the product estimate
then proves \eqref{critical-measures}. This is the measure argument
underlying anisotropic concentration--compactness; see also \cite{HamRak07}.

We next relate each atom to the variational equation, including atoms
on $\partial\Omega$. Set
\[
\beta_i^c:=\frac{\gamma^*}{\gamma_i}-1>0,
\qquad \sum_i\beta_i^c=\gamma^*.
\]
Choose $0\leq\eta\leq1$ in $C_c^\infty(\R^N)$, equal to one on
$[-1,1]^N$ and zero outside $[-2,2]^N$, and define
\[
\eta_\varepsilon(x)=\eta\left(
\frac{x_1-x_{j,1}}{\varepsilon^{\beta_1^c}},\ldots,
\frac{x_N-x_{j,N}}{\varepsilon^{\beta_N^c}}\right).
\]
The test $u_n\eta_\varepsilon$ belongs to the working space even for
$x_j\in\partial\Omega$, because $u_n$ has zero trace. For fixed
$\varepsilon$ its norm is bounded independently of $n$, and hence
\[
\begin{aligned}
\sum_i\int\eta_\varepsilon b_i(\partial_i u_n)\partial_i u_n\,dx
&=\int\eta_\varepsilon(u_n^+)^{\gamma^*}\,dx
+\lambda\int\eta_\varepsilon f(u_n^+)u_n^+\,dx\\
&-\sum_i\int b_i(\partial_i u_n)u_n\partial_i\eta_\varepsilon\,dx
+o_n(1).
\end{aligned}
\]
To estimate the mixed terms, write
$Q_\varepsilon=\{|x_i-x_{j,i}|<2\varepsilon^{\beta_i^c}\ \forall i\}$
and let $R_\varepsilon$ be the shell obtained by removing the box with
all factors 2 replaced by 1. Every derivative of $\eta_\varepsilon$
is supported there. By H\"older's inequality,
\[
\begin{aligned}
\int_{R_\varepsilon}|\partial_i u_n|^{m-1}|u_n|
|\partial_i\eta_\varepsilon|\,dx
&\leq C\varepsilon^{-\beta_i^c}
\|\partial_i u_n\|_{L^m(R_\varepsilon)}^{m-1}
\|u_n\|_{L^{\gamma^*}(R_\varepsilon)}
|R_\varepsilon|^{1/m-1/\gamma^*},
\end{aligned}
\]
where $m=p_i$ or, for $i\in\mathscr I_q$, $m=q_i$.
Because $|Q_\varepsilon|=C\varepsilon^{\gamma^*}$, the power of
$\varepsilon$ on the right is
$\gamma^*/m-1-\beta_i^c=\gamma^*(1/m-1/\gamma_i)\geq0$.
It is zero for the effective phase and positive for a lower active
phase. The derivative norms are bounded. Moreover,
\[
\lim_{\varepsilon\downarrow0}\limsup_{n\to\infty}
\int_{R_\varepsilon}|u_n|^{\gamma^*}\,dx=0.
\]
Indeed, choose a sequence of shrinking boxes with boundaries of zero
$\nu$-measure; the shell excludes the central atom and its remaining
$\nu$-mass tends to zero. Since $u_n^-\to0$, the same conclusion holds
for $|u_n|^{\gamma^*}$. The mixed terms therefore vanish in this
iterated limit by $(a_1)$. The subcritical growth of $f$ and strong
$L^r$ convergence imply that the $f$-term also vanishes as the box shrinks.
By $(a_1)$ the energy density is at least
$\sum_i d_i|\partial_i u_n|^{\gamma_i}$, where
\[
d_i=\begin{cases}\alpha_i,&i\notin\mathscr I_q,\\
\theta_i,&i\in\mathscr I_q.
\end{cases}
\]
Passing first $n\to\infty$ and then $\varepsilon\downarrow0$ yields
\begin{equation}\label{critical-atom-balance}
\sum_i d_i\mu_{i,j}\leq\nu_j.
\end{equation}
If $\nu_j>0$, combining \eqref{critical-measures} and
\eqref{critical-atom-balance} gives
\[
\nu_j^{1/\gamma^*}\leq K\nu_j^{1/\overline\gamma},
\qquad K=C_T\prod_i d_i^{-1/(N\gamma_i)}.
\]
Since $1/\overline\gamma-1/\gamma^*=1/N$, every nonzero atom satisfies
$\nu_j\geq\delta:=K^{-N}>0$. Set
\[
c_*:=\left(\frac1\theta-\frac1{\gamma^*}\right)\delta>0.
\]
The earlier energy inequality and boundedness of $(u_n)$ imply
\[
c=\lim_n\left(I_\lambda(u_n)-\frac1\theta I_\lambda'(u_n)u_n\right)
\geq\left(\frac1\theta-\frac1{\gamma^*}\right)\nu_j\geq c_*
\]
whenever an atom is present. Thus no atom occurs if $0<c<c_*$.
By \eqref{critical-measures} and the Brezis--Lieb identity,
$u_n^+\to u$ in $L^{\gamma^*}(\Omega)$. Together with $u_n^-\to0$,
this gives $u_n\to u$ in that space. By $(f_3)$--$(f_4)$ and the
compact subcritical embeddings, the $f$-term converges strongly in the
dual space as well. Testing $I_\lambda'(u_n)\to0$ by $u_n-u$ now gives
\[
\limsup_n\langle\mathcal A(u_n),u_n-u\rangle\leq0.
\]
Proposition \ref{Splus} proves strong convergence in the working space.
Consequently $I_\lambda$ satisfies $(PS)_c$ for every $0<c<c_*$,
without assuming a compact embedding at $\gamma^*$.

\emph{Step 3: the minimax level is below $c_*$.} For the function $\varphi$ fixed above, the map
$t\mapsto I_\lambda(t\varphi)$ is positive for some $t>0$ and tends
to $-\infty$ as $t\to\infty$. Hence it attains its positive global
maximum at some $t_\lambda>0$. The one-dimensional Euler identity is
\[
 \sum_{i=1}^N\mathcal B_i(t_\lambda\varphi)
 =\lambda\int_\Omega f(t_\lambda\varphi)t_\lambda\varphi\,dx
 +t_\lambda^{\gamma^*}\|\varphi\|_{\gamma^*}^{\gamma^*}.
\]

By $(a_1)$, for $t_\lambda\geq1$,
\[
\sum_{i=1}^N\mathcal B_i(t_\lambda\varphi)
\leq C\left(t_\lambda^\kappa+t_\lambda^{p_1}\right)
\leq C_1t_\lambda^\kappa.
\]
Since $\kappa<\gamma^*$, the preceding identity shows that
$(t_\lambda)$ is bounded.

We claim that $t_\lambda\to0$ as $\lambda\to\infty$. Otherwise, there
would exist $\varepsilon>0$ and a sequence $\lambda_n\to\infty$ such that
$t_{\lambda_n}\geq\varepsilon$. Since $(t_{\lambda_n})$ is bounded, up to
a subsequence,
\[
t_{\lambda_n}\to t_0>0.
\]
Since $(t_{\lambda_n})$ is bounded, $\varphi\in C_c^\infty(\Omega)$,
and $f$ is continuous, the dominated convergence theorem yields
\[
\int_\Omega f(t_{\lambda_n}\varphi)t_{\lambda_n}\varphi\,dx
\longrightarrow
\int_\Omega f(t_0\varphi)t_0\varphi\,dx.
\]
Moreover, since $\varphi\geq0$, $\varphi\not\equiv0$, and
$sf(s)>0$ for every $s>0$, we have
\[
\int_\Omega f(t_0\varphi)t_0\varphi\,dx>0.
\]
Therefore the right-hand side of the one-dimensional Euler identity tends
to $+\infty$, whereas its left-hand side remains bounded, a contradiction.
Thus $t_\lambda\to0$.

Choose $T>t_\lambda$ such that $I_\lambda(T\varphi)<0$.
The path $s\mapsto sT\varphi$ is admissible in the mountain-pass
class, and therefore
\[
c_\lambda
\leq \max_{0\leq t\leq T}I_\lambda(t\varphi)
= I_\lambda(t_\lambda\varphi).
\]
Consequently,
\[
0<c_\lambda\leq I_\lambda(t_\lambda\varphi)
\leq\sum_{i=1}^N\mathcal A_i(t_\lambda\varphi)\longrightarrow0.
\]
Choose $\lambda^*>0$ so that $c_\lambda<c_*$ for
$\lambda\geq\lambda^*$. The Palais--Smale sequence then converges strongly
to a critical point $u_\lambda$. Since $c_\lambda>0$, this point is
nontrivial. Testing the Euler equation with $u_\lambda^-$ gives
$u_\lambda\geq0$, and the proof is complete.
\end{proof}

\color{black}

\color{black}

\color{black}
\end{document}